\numberwithin{equation}{section}
\newtheorem{Theorem}{Theorem}[section]
\newtheorem{Corollary}[Theorem]{Corollary}
\newtheorem{Lemma}[Theorem]{Lemma}
\newtheorem{Proposition}[Theorem]{Proposition}
 { \theoremstyle{definition}
\newtheorem{Definition}[Theorem]{Definition}
\newtheorem{Remark}[Theorem]{Remark} }
\def\aaa{\alpha}
\def\la{\lambda}
\def\eee{\varepsilon}
\def\ek{\mathcal {X}}
\def\eks{\mathcal {X}(f_s;\mathrm{R})}
\def\u{U_q({\mathfrak{sp}}_{2n})}
\def\k{\mathbf{k}}
\def\X{\mathfrak{X}}
\def\D{\mathfrak{D}}
\begin{document}

\allowdisplaybreaks

\newcommand{\arXivNumber}{1609.05270}

\renewcommand{\PaperNumber}{084}

\FirstPageHeading

\ShortArticleName{Realization of $U_q({\mathfrak{sp}}_{2n})$ within the Dif\/ferential Algebra on Quantum Symplectic Space}

\ArticleName{Realization of $\boldsymbol{U_q({\mathfrak{sp}}_{2n})}$ within the Dif\/ferential Algebra\\ on Quantum Symplectic Space}

\Author{Jiao ZHANG~$^\dag$ and Naihong HU~$^\ddag$}

\AuthorNameForHeading{J.~Zhang and N.~Hu}

\Address{$^\dag$~Department of Mathematics, Shanghai University,\\
\hphantom{$^\dag$}~Baoshan Campus, Shangda Road 99, Shanghai 200444, P.R.~China}
\EmailD{\href{mailto:zhangjiao@shu.edu.cn}{zhangjiao@shu.edu.cn}}

\Address{$^\ddag$~Department of Mathematics, Shanghai Key Laboratory of Pure Mathematics\\
\hphantom{$^\ddag$}~and Mathematical Practice, East China Normal University,\\
\hphantom{$^\ddag$}~Minhang Campus, Dong Chuan Road 500, Shanghai 200241, P.R.~China}
\EmailD{\href{mailto:nhhu@math.ecnu.edu.cn}{nhhu@math.ecnu.edu.cn}}

\ArticleDates{Received April 18, 2017, in f\/inal form October 20, 2017; Published online October 27, 2017}

\Abstract{We realize the Hopf algebra $U_q({\mathfrak {sp}}_{2n})$ as an algebra of quantum dif\/ferential operators on the quantum symplectic space $\mathcal{X}(f_s;\mathrm{R})$ and prove that $\mathcal{X}(f_s;\mathrm{R})$ is a $U_q({\mathfrak{sp}}_{2n})$-module algebra whose irreducible summands are just its homogeneous subspaces. We give a coherence realization for all the positive root vectors under the actions of Lusztig's braid automorphisms of $U_q({\mathfrak {sp}}_{2n})$.}

\Keywords{quantum symplectic group; quantum symplectic space; quantum dif\/ferential operators; dif\/ferential calculus; module algebra}

\Classification{17B10; 17B37; 20G42; 81R50; 81R60; 81T75}

\section{Introduction}

Quantum analogues of dif\/ferential forms and dif\/ferential operators on quantum groups or Hopf algebras or quantum spaces have been studied extensively since the end of 1980s (see \cite{h,nh2, ks,m,oo,wz, w}, etc.\ and references therein). As a main theme of noncommutative (dif\/ferential) geo\-met\-ry, the general theory of bicovariant dif\/ferential calculus on quantum groups or Hopf algebras was established in~\cite{w}. Woronowicz's axiomatic description of bicovariant bimodules (namely, Hopf bimodules in Hopf algebra theory) is not only used to construct/classify the f\/irst order dif\/ferential calculi (FODC) on Hopf algebras, but also leads to the appearance of Woronowicz's braiding \cite[Proposition~3.1]{w} (also see \cite[Theorem~6.3]{S}). Actually, the def\/ining condition of Yetter--Drinfeld module appeared implicitly in Woronowicz's work a bit earlier than~\cite{rw, Y} (see \cite[formula (2.39)]{w}), as was witnessed by Schauenburg in~\cite[Corollaries~6.4 and~6.5]{S} proving that the category of Woronowicz's bicovariant bimodules is categorically equivalent to the category of Yetter--Drinfeld modules, while the latter has currently served as an important \mbox{working} framework for classifying the f\/inite-dimensional pointed Hopf algebras. The coupled pair consists of a quantum group and its corresponding quantum space on which it coacts, both of which in the pair were intimately interrelated~\cite{nr}. On the other hand, the covariant dif\/ferential calculus on the quantum space $\mathbb C_q^n$ was built by Wess--Zumino \cite{wz} so as to extend the covariant coaction of the quantum group $\text{GL}_q(n)$ to quantum derivatives. Along the way, many pioneering works appeared by Ogievetsky et al.~\cite{oo,oswz1,oswz2}, etc.

Recall that for any bialgebra $\mathcal {A}$, by a~quantum space for $\mathcal {A}$ we mean a right $\mathcal {A}$-comodule algebra $\mathcal X$. Here, we let $\mathcal {A}$ denote a certain Hopf quotient of the FRT bialgebra $\mathcal {A}(\mathrm{R})$, which is related with a standard $R$-matrix $\mathrm{R}$ of the $ABCD$ series (cf.~\cite{ks,nr}), and we set $\mathcal X:=\mathcal X_r(f_s; \mathrm{R})$ (adopting the notation in the book \cite{ks}). For the def\/inition of polynomials $f_s$ in types $ABCD$, we refer to \cite[Def\/initions~4,~8, 12 in Sections~9.2 and 9.3]{ks}. Roughly speaking, viewing $U_q(\mathfrak g)$ as the Hopf dual object of quantum group $G_q$ in types $ABCD$, one sees that the aforementioned quantum space $\mathcal X$ is a left $U_q(\mathfrak g)$-module algebra. As a benef\/it of the viewpoint, this allows one to do the crossed product construction to enlarge the quantum enveloping algebra $U_q(\mathfrak g)$ into a~quantum enveloping parabolic subalgebra of the same type but with a higher rank. This actually contributes an evidence to support Majid's conjecture~\cite{sm} on the rank-inductive construction of $U_q(\mathfrak g)$'s via his double-bosonization procedure (see also a recent work \cite{hh} for conf\/irming Majid's claim in the classical cases).

For types $B$ and $D$, under the assumption that $q$ is not a root of unity, Fiore \cite{gf} used the standard $R$-matrix for the quantum group ${\rm SO}_q(N)$ ($N=2n+1$ or $2n$) to def\/ine some quantum dif\/ferential operators on the quantum Euclidean space $\mathbb R_q^N$. Then he realized $U_{q^{-1}}(\mathfrak{so}_N)$ within the dif\/ferential algebra $\operatorname{Dif\/f}(\mathbb R_q^N)$ such that $\mathbb R_q^N$ is a left $U_{q^{-1}}(\mathfrak{so}_N)$-module algebra, and further developed the corresponding quantum Euclidean geometry in his subsequent works. There were many works \cite{oo,oswz1,oswz2}, prior to~\cite{gf}, using quantum dif\/ferential operators to describe the ${\rm GL}_q(n)$ and ${\rm SO}_q(n)$, $q$-Lorentz algebra, and $q$-deformed Poincar\'e algebra, etc.

For type $A$, there appeared several special discussions in rank $1$ case, see \cite{ck,ms,wz}, etc. To our interest, for arbitrary rank, dif\/ferent from \cite{oo} and \cite{gf}, the second author \cite{nh} introduced the notion of quantum divided power algebra $\mathcal A_q(n)$ for $q$ both generic and root of unity. He def\/ined $q$-derivatives over $\mathcal A_q(n)$ and realized the $U$-module algebra structure of $\mathcal A_q(n)$ for $U=U_q(\mathfrak{sl}_{n})$, $\mathfrak u_q(\mathfrak{sl}_n)$. A coherence realization of all the positive root vectors in terms of the quantum dif\/ferential operators was provided
(in the modif\/ied $q$-Weyl algebra $\mathcal W_q(2n)$) which are compatible with the actions of Lusztig's braid automorphisms~\cite{gl}.
Especially, this discussion of $q$-derivatives resulted in the def\/inition of the quantum universal enveloping algebras of abelian Lie algebras for the f\/irst time, and even the new Hopf algebra structure so-called the $n$-rank Taft algebra (see \cite{nh2, lh}) in root of unity case. Based on the realization in~\cite{nh}, Gu and Hu~\cite{gh} gave further explicit results of the module structures on the quantum Grassmann algebra def\/ined over the quantum divided power algebra, the quantum de Rham complexes and their cohomological modules, as well as the descriptions of the Loewy f\/iltrations of a class of interesting indecomposable modules for Lusztig's small quantum group $\mathfrak u_q(\mathfrak{sl}_n)$.

For type $C$, it seems lack of corresponding discussions over the quantum symplectic space in the literature. Here we consider the quantum enveloping algebra $U_q{({\mathfrak{sp}}_{2n})}$ with $n\geq 2$ and its corresponding quantum symplectic space $\eks$. We assume that $q$ is not a root of unity. We def\/ine the $q$-analogues $\partial_i:=\partial_q/\partial x_{i}$ of the classical partial derivatives and introduce left- and right-multiplication operators $x_{i_L}$ and $x_{i_R}$ as in \cite{ck}. Our discussion also does not use the $R$-matrix as a tool as in \cite{gf}. We consider the subalgebra $U_q^{2n}$ generated by some quantum dif\/ferential operators in the quantum dif\/ferential algebra $\operatorname{Dif\/f}(\eks)$ (we call it the {\it modified} $q$-Weyl algebra of type $C$, distinctive from the ordinary one, since it contains some extra automorphisms as group-likes inside). Furthermore, we check the Serre relations of~$U_q^{2n}$ and show $\eks$ is a $U_q({\mathfrak{sp}}_{2n})$-module algebra. At last, we show that the positive root vectors of~$U_q({\mathfrak{sp}}_{2n})$ def\/ined by Lusztig's braid automorphisms in~\cite{gl} can be realized precisely by means of the quantum dif\/ferential operators def\/ined in Section~\ref{section5}.

The paper is organized as follows. Section~\ref{section2} gives the def\/inition of the quantum symplectic space $\eks$ and derives some useful formulas. In Section~\ref{section3}, we def\/ine the quantum dif\/ferential operators on $\eks$ and a subalgebra $U_q^{2n}$ of $\operatorname{Dif\/f}(\eks)$. We prove that the generators of $U_q^{2n}$ satisfy the Serre relations which implies that $U_q^{2n}$ is a~quotient algebra of $U_q({\mathfrak{sp}}_{2n})$. We show that $\eks$ is a $\u$-module algebra whose irreducible summands are just its homogeneous subspaces. In Section~\ref{section4}, we provide inductive formulas to calculate all the positive root vectors under the actions of Lusztig's braid automorphisms of $U_q({\mathfrak{sp}}_{2n})$ from simple root vectors. In Section~\ref{section5}, we give a coherence realization for all the positive root vectors of $U_q({\mathfrak{sp}}_{2n})$.

For simplicity, we write $\ek$ for $\eks$. Let $\mathbb{N}_0$ (resp. $\mathbb{N}$) be the set of nonnegative (resp. positive) integers, $\mathbb{R}$ denote the set of real numbers, $\k$ the underlying f\/ield of characteristic $0$. Assume that $q$ is invertible in $\k$ and is not a root of unity. Let $n\geq 2$ be a positive integer. Set $I=\{-n,-n+1,\ldots,-1,1,\ldots,n-1,n\}$ and $I^{+}=\{1,\ldots,n\}$.

\section{Preliminaries}\label{section2}
\textbf{2.1}. Recall that the \textit{$q$-number} $[m]_q$ for $m\in \mathbb{Z}$ is def\/ined by $[m]_q:=\frac{q^m-q^{-m}}{q-q^{-1}}$. Note that $[0]_q=0$. For $m\in \mathbb{N}$, the \textit{$q$-factorial} is def\/ined by setting $[m]_q!:=[1]_q[2]_q\cdots[m]_q$ and $[0]_q!:=1$. The \textit{$q$-binomial coefficients} are def\/ined by
\begin{gather*}
 \left[{m}\atop k\right]_q:=\frac{[m]_q[m-1]_q\cdots[m-k+1]_q}{[1]_q[2]_q\cdots[k]_q}
\end{gather*}
for $m,k\in\mathbb{Z}$ with $k>0$, and $\left[{m}\atop 0\right]_q:=1$. So if $k>m\geq 0$, then $\left[{m}\atop k\right]_q=0$. Set $[A,B]_v=AB-vBA$ for $v\in \k$. When $v=1$, $[\cdot,\cdot]_1$ is the commutator $[\cdot,\cdot]$. The following three lemmas can be checked directly and will be used many times in Sections~\ref{section4} and~\ref{section5}.
\begin{Lemma}\label{comm1} For $u,v\in \k$ and $u\neq 0$, if $AB=uBA$, then
\begin{gather}
\notag [A,BC]_v=uB[A,C]_{v/u},\\
\label{comm1.2} [A,CB]_v=[A,C]_{v/u}B,\\
\notag [CA,B]_v=u[C,B]_{v/u}A,\\
\notag [AC,B]_v=A[C,B]_{v/u}.
\end{gather}
\end{Lemma}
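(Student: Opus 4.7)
The plan is to verify each of the four identities by a direct computation: expand $[A,\cdot]_v$ or $[\cdot,B]_v$ using its definition, apply the hypothesis $AB=uBA$ (equivalently $BA=u^{-1}AB$, using $u\neq 0$) once to move $A$ past $B$, and then refactor the resulting expression in terms of a bracket with parameter $v/u$. Nothing beyond associativity and the single commutation relation is needed.

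For instance, for the first identity I would write
\begin{gather*}
[A,BC]_v \;=\; A(BC)-v(BC)A \;=\; (AB)C-vBCA \;=\; uB(AC)-vBCA \;=\; uB\bigl(AC-(v/u)CA\bigr) \;=\; uB[A,C]_{v/u},
\end{gather*}
and the second identity follows by the same pattern, except that one applies $BA=u^{-1}AB$ on the right end of the product $CBA$ to produce $(v/u)CAB$ and then factor $B$ out on the right. The third and fourth identities are completely symmetric: they arise from the first two by swapping the roles of ``first argument'' and ``second argument'' of the bracket, and again use only one application of $AB=uBA$ to reorder the innermost $AB$ or $BA$ pair.

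There is no substantive obstacle here; the only hypothesis that gets used nontrivially is $u\neq 0$, which is what makes the parameter $v/u$ of the resulting bracket well defined. Once the four one-line calculations are carried out, the lemma is proved, and the identities can be invoked freely in Sections~\ref{section4} and~\ref{section5} whenever two operators are known to $q$-commute up to a scalar.
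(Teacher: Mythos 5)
Your proof is correct and coincides with what the paper intends: the paper offers no written proof, stating only that the lemma ``can be checked directly,'' and your expand--commute--refactor computation is exactly that direct verification. Each of the four identities follows from a single application of $AB=uBA$ (or equivalently $BA=u^{-1}AB$, which is where $u\neq 0$ enters, along with making $v/u$ well defined), so nothing further is required.
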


\begin{Lemma}\label{comm2} For $u,v,w\in \k$ and $u\neq 0$, if $AC=uCA$,
then
\begin{gather}
\notag [[A,B]_{v},C]_{w}=[A,[B,C]_{w/u}]_{uv},\\
\label{comm2.2} [[B,A]_{v},C]_{w}=u[[B,C]_{w/u},A]_{v/u}.
\end{gather}
\end{Lemma}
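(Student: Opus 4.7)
The plan is to verify both identities by expanding every bracket into four monomials and then applying the hypothesis $AC=uCA$ in its two natural consequences, $BAC=uBCA$ (multiplying on the left by $B$) and $ACB=uCAB$ (multiplying on the right by $B$). These are the only rearrangements needed, since neither identity mentions a commutation relation between $B$ and the other letters, so every monomial of the form $BCA$, $CBA$, $ABC$, $BAC$, $CAB$, $ACB$ is regarded as an independent word and the bookkeeping just counts coefficients.

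For the first identity, expand
\[
[[A,B]_v,C]_w = ABC - vBAC - wCAB + vwCBA,
\]
\[
[A,[B,C]_{w/u}]_{uv} = ABC - (w/u)ACB - uvBCA + vwCBA.
\]
The outer monomials $ABC$ and $CBA$ match. Applying $ACB=uCAB$ to the second term on the right turns $-(w/u)ACB$ into $-wCAB$, matching the left; applying $BAC=uBCA$ to the second term on the left turns $-vBAC$ into $-uvBCA$, matching the right. This settles \eqref{comm2.2}'s unlabelled companion (the first displayed identity).

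For the second identity \eqref{comm2.2}, expand
\[
[[B,A]_v,C]_w = BAC - vABC - wCBA + vwCAB,
\]
\[
u[[B,C]_{w/u},A]_{v/u} = uBCA - wCBA - vABC + (vw/u)ACB.
\]
Here the middle monomials $-vABC$ and $-wCBA$ agree immediately. What remains is to match $BAC + vwCAB$ against $uBCA + (vw/u)ACB$, and this is precisely the two forms of the hypothesis: $BAC=uBCA$ and $ACB=uCAB$ (the latter giving $vwCAB=(vw/u)ACB$).

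There is no genuine obstacle; the only thing to be careful about is the placement of $u$'s and $v$'s, in particular the ratios $w/u$, $v/u$, $uv$ that appear inside the outer brackets of the right-hand sides. The cleanest way to present the proof is simply to display the two four-term expansions side by side and observe that each of the two nontrivial monomials is converted to its partner by exactly one application of $AC=uCA$.
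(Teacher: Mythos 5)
Your proposal is correct: both four-term expansions are accurate, and each of the two nontrivial monomial matchings ($BAC=uBCA$ and $ACB=uCAB$) is exactly one application of the hypothesis $AC=uCA$, with all coefficients $u$, $v$, $w$ tracked properly. This is precisely the routine direct verification the paper has in mind --- it gives no written proof, stating only that the lemma ``can be checked directly'' --- so your argument fills in the intended computation and nothing more needs to be said.
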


\begin{Lemma} We have
 \begin{gather}
\notag [A,B]_q=-q[B,A]_{q^{-1}},\\
\label{brac1} [AB,C]_{q^2}=A[B,C]_q+q[A,C]_qB,\\
\label{brac3} [[A,B]_q,C]_q=[A,[B,C]]_{q^2}+[[A,C]_q,B]_q.
 \end{gather}
\end{Lemma}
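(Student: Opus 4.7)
The plan is to prove all three identities by direct expansion of the definition $[X,Y]_v=XY-vYX$, since each asserted equality is a purely formal polynomial identity in the (associative, but otherwise free) letters $A,B,C$ with coefficients in $\mathbf{k}$; no properties of $A,B,C$ beyond associativity and $\mathbf{k}$-bilinearity are invoked. So the route is expand, collect, and compare.

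For the first identity I would simply write $[A,B]_q=AB-qBA=-q(BA-q^{-1}AB)=-q[B,A]_{q^{-1}}$, which is a one-line manipulation. For the second identity, I would expand both sides: the left side is $ABC-q^2CAB$, while the right side is $A(BC-qCB)+q(AC-qCA)B=ABC-qACB+qACB-q^2CAB$, and the two cross terms $\pm qACB$ cancel, leaving $ABC-q^2CAB$, matching the left side. The third identity is the most involved but still mechanical: I expand
\begin{gather*}
[[A,B]_q,C]_q=(AB-qBA)C-qC(AB-qBA)=ABC-qBAC-qCAB+q^2CBA,
\end{gather*}
then expand the right side
\begin{gather*}
[A,[B,C]]_{q^2}+[[A,C]_q,B]_q=A(BC-CB)-q^2(BC-CB)A+(AC-qCA)B-qB(AC-qCA),
\end{gather*}
and observe that the terms $-ACB$ and $+ACB$ cancel, as do $-q^2BCA$ and $+q^2BCA$, leaving $ABC-qCAB-qBAC+q^2CBA$, which agrees with the left side.

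There is no real obstacle here; the only thing to be careful about is bookkeeping of signs and the powers of $q$ in the third identity, where six terms on each side must be matched pairwise. I would present the proof as three short direct computations, possibly omitting the first two since they are one-liners and noting only the cancellations in the third. Since Lemmas \ref{comm1} and \ref{comm2} are stated without proof (``can be checked directly''), I expect this lemma to be handled in the same manner, namely as an elementary verification that the reader can reproduce.
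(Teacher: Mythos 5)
Your proposal is correct and takes the same route as the paper, which offers no written proof at all: it merely states that this lemma (like Lemmas \ref{comm1} and \ref{comm2}) ``can be checked directly,'' and your direct expansions of $[X,Y]_v=XY-vYX$ are exactly that check, with all signs and powers of $q$ handled accurately. One trivial slip in your commentary: in the third identity the right side expands to eight terms, four of which cancel in two pairs to match the four terms of the left side, rather than ``six terms on each side.''
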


\textbf{2.2.} Recall that the simple roots of ${\mathfrak{sp}}_{2n}$ are $\aaa_1=2\epsilon_1$ and $\aaa_i=\epsilon_i-\epsilon_{i-1}$ for $2\leq i\leq n$, where $\epsilon_i=(\delta_{1i},\dots,\delta_{ni})$ and $\epsilon_1,\ldots,\epsilon_n$ form a canonical basis of $\mathbb{R}^n$. Note that here $\aaa_1$ is chosen to be longer than other simple roots. Let $\Delta^+$ be the set of positive roots of ${\mathfrak{sp}}_{2n}$, then
\begin{gather*}\Delta^+=\{2\epsilon_i,\pm\epsilon_l+\epsilon_k\,|\, 1\leq i\leq n, \, 1 \leq l<k\leq n\}.\end{gather*}

\textbf{2.3.} Recall that the quantum universal enveloping algebra $U_{q}({\mathfrak{sp}}_{2n})$ generated by $\{E_i,F_i,K_i$, $K_i^{-1}$, $i\in I^{+}\}$ has the def\/ining relations as follows:
\begin{gather}
\label{serre1}{K}_i {K}_j={K}_j {K}_i,\qquad {K}_i {K}_i^{-1}= {K}_i^{-1} {K}_i=1,\\
\label{serre2}{K}_iE_j {K}_i^{-1}=q_i^{a_{ij}}E_j,\qquad {K}_iF_j {K}_i^{-1}=q_i^{-a_{ij}}F_j,\\
\label{serre3}[E_i,F_j]=\delta_{ij}\frac{ {K}_i- {K}_i^{-1}}{q_i-q_i^{-1}},\\
\label{serre4}\sum_{t=0}^{1-a_{ij}}{(-1)^t\left[{1-a_{ij}\atop t}\right]_{q_i}E_i^tE_jE_i^{1-a_{ij}-t}}=0,\qquad i\neq j,\\
\label{serre5}\sum_{t=0}^{1-a_{ij}}{(-1)^t\left[{1-a_{ij}\atop t}\right]_{q_i}F_i^tF_jF_i^{1-a_{ij}-t}}=0,\qquad i\neq j,
\end{gather}
where $q_i=q^{\frac{(\alpha_i,\alpha_i)} {2}}$, $a_{ij}=\frac{2(\alpha_i,\alpha_j)} {(\alpha_i,\alpha_i)}$, and the Cartan matrix $(a_{ij})$ of $\mathfrak {sp}_{2n}$ in our indices is
\begin{gather*}\left(
\begin{matrix}
2 & -1 & 0&0 &\cdots&\cdots&0 \\
-2& 2 &-1&0 &\cdots&\cdots&0 \\
0&-1&2&-1&\cdots&\cdots&0\\
\vdots &\ddots&\ddots&\ddots &\ddots&&\vdots\\
\vdots&&\ddots&\ddots&\ddots &\ddots&\vdots\\
0 &\multicolumn{2}{c}{\cdots}&0&-1 &2 &-1 \\
0 &\multicolumn{2}{c}{\cdots}&\cdots & 0 &-1&2
\end{matrix}
\right).
\end{gather*} Note that $q_1=q^2$, $q_i=q$ for $1<i\leq n$. The relations \eqref{serre4} and \eqref{serre5} are usually called the \textit{Serre relations}.

The algebra $U_{q}({\mathfrak{sp}}_{2n})$ is a Hopf algebra equipped with coproduct $\Delta$, counit $\eee$ and antipode $S$ def\/ined by
\begin{gather} \label{de1}\Delta(E_i)=E_i\otimes K_i+{1}\otimes E_i,\quad \Delta(F_i)=F_i\otimes { 1}+K_i^{-1}\otimes F_i,\\
\label{de2}\Delta\big(K_i^{\pm 1}\big)=K_i^{\pm 1}\otimes K_i^{\pm 1},\\
\label{eee} \eee(E_i)=\eee(F_i)=0,\qquad \eee\big(K_i^{\pm 1}\big)=1,\\
 \notag S(E_i)=-E_iK_i^{-1},\qquad S(F_i)=-K_iF_i,\qquad S\big(K_i^{\pm 1}\big)=K_i^{\mp 1},
\end{gather} for $i\in I^+$.

\textbf{2.4.} Set $\lambda=q-q^{-1}$. By \cite[Proposition~16 in Section~9.3.4]{ks}, the quantum symplectic space~$\ek$ is the algebra with generators~$x_i$, $i\in I$, and def\/ining relations:
\begin{gather}
x_jx_i=qx_ix_j,\qquad i<j, \quad -i\neq j,\label{ek1}\\
x_ix_{-i}=q^2x_{-i}x_i+q^2\lambda\Omega_{i+1}, \qquad i\in I^+,\label{ek2}
\end{gather}
where $\Omega_i:=\sum\limits_{i\leq j\leq n}q^{j-i}x_{-j}x_{j}$ for $i\in I^+$, and $\ek$ is a vector space with basis
\begin{gather*}\big\{x_{-n}^{a_{-n}}\cdots x_{n}^{a_{n}} \,|\, a_{-n},\ldots,a_{n}\in \mathbb{N}_0 \big\}.\end{gather*}

By def\/inition, for $1\leq i\leq n-1$, we have
\begin{gather}\label{omega}\Omega_i=x_{-i}x_i+q\Omega_{i+1}.
\end{gather}
From relations \eqref{ek1} and \eqref{ek2}, we can obtain the following identities:
\begin{gather}\label{ek3}
\Omega_ix_k=\begin{cases}
q^2x_k\Omega_i, &-n\leq k\leq -i,\\
x_k\Omega_i, &-i< k< i,\\
q^{-2}x_k\Omega_i, &i\leq k\leq n,
\end{cases}
\end{gather}
and \begin{gather*}\Omega_i\Omega_j=\Omega_j\Omega_i,\qquad i,j\in I^+.\end{gather*}

Set $x^{a}:=x_{-n}^{a_{-n}}\cdots x_{n}^{a_{n}}$ and $a:=(a_{-n},a_{1-n},\ldots,a_{n})$, where $a_{-n},\ldots,a_{n}\in \mathbb{N}_0$. We call the monomial
$x_{i_1}^{a_{i_1}}\cdots x_{i_m}^{a_{i_m}}$ whose subscripts are placed in an increasing order a {\it normal monomial}. Write $\eee_i=(0,\ldots,1,\ldots,0)\in\mathbb R^{2n}$ with $1$ in the $i$-position and $0$ elsewhere. Then $a=\sum\limits_{i\in I}a_i\eee_i$. Set $|a|=\sum\limits_{i\in I}a_i$. Thus $\ek=\bigoplus_m \ek^m$ is an $\mathbb{N}_0$-graded algebra with $\ek^m=\operatorname{Span}_{\k}\{x^{a}\,|\, |a|=m\}$.

By induction and using relations \eqref{ek1}--\eqref{ek3}, we get
\begin{gather*}
x_ix_{-i}^m=q^{2m}x_{-i}^m x_i +q^{m+1}\la[m]_q\Omega_{i+1}x_{-i}^{m-1},\\
x_i^m x_{-i}=q^{2m}x_{-i} x_i^m +q^{m+1}\la[m]_q\Omega_{i+1}x_{i}^{m-1},
\end{gather*}
for $i\in I^+$ and $m\in \mathbb{N}_0$. Hence, for $i\in I^+$, we have
\begin{gather}\label{rel1}
 x_{-i}x^{a} =\left(\prod_{j=-n}^{-i-1}q^{a_j}\right) x^{a+\eee_{-i}},\qquad x^{a}x_{i} =\left(\prod_{j=i+1}^{n}q^{a_j}\right) x^{a+\eee_{i}},\\
 \label{rel2} x_{i}x^{a}= \left(\prod_{j=-n}^{i-1}q^{a_j} \right)q^{a_{-i}}x^{a+\eee_{i}} +\left(\prod_{j=-n}^{-i-1}q^{-a_j}\right)
q^{a_{-i}+1}\lambda[a_{-i}]_q\Omega_{i+1} x^{a-\eee_{-i}},\\
 \label{rel3}x^{a}x_{-i} =\left(\prod_{j=1-i}^{n}q^{a_j}\right)q^{a_i}x^{a+\eee_{-i}} +\left(\prod_{k=i}^{n}q^{a_k}\right)\left(\prod_{j=-n}^{-i-1}q^{-2a_j}\right)q
\lambda[a_{i}]_q\Omega_{i+1}x^{a-\eee_{i}} .\end{gather}
The following lemma will be used later.
\begin{Lemma}
 For $i\in I^+$, we have \begin{gather}
\label{rel4}\Omega_i x^{a} =\left(\prod_{l=-n}^{-i}q^{2a_l}\right) \left(\sum_{j=i}^{n}q^{j-i+(a_{1-j}+\cdots+a_{j-1})}x^{a+\eee_{-j}+\eee_j}\right).
\end{gather}
\end{Lemma}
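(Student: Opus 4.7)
The plan is to prove the formula by downward induction on $i$, starting from $i=n$ and descending to $i=1$, using the recursion $\Omega_i = x_{-i}x_i + q\Omega_{i+1}$ of \eqref{omega}, together with the multiplication formulas \eqref{rel1}, \eqref{rel2} and the commutation rule \eqref{ek3}. I adopt the convention $\Omega_{n+1}=0$ (empty sum).

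For the base case $i=n$, since $\Omega_{n+1}=0$, the second summand in \eqref{rel2} with $i=n$ vanishes, so $x_n x^a = \bigl(\prod_{l=-n}^{n-1} q^{a_l}\bigr) q^{a_{-n}} x^{a+\eee_n}$. Multiplying on the left by $x_{-n}$ via \eqref{rel1} (the prefactor being an empty product equal to $1$) yields $x_{-n}x_n x^a = q^{2a_{-n}+(a_{1-n}+\cdots+a_{n-1})} x^{a+\eee_{-n}+\eee_n}$, which is exactly the claimed identity at $j=i=n$.

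For the inductive step, I write $\Omega_i x^a = x_{-i}(x_i x^a) + q\Omega_{i+1}x^a$ and expand $x_i x^a$ using \eqref{rel2}. The first summand $\bigl(\prod_{l=-n}^{i-1}q^{a_l}\bigr) q^{a_{-i}} x^{a+\eee_i}$, after applying $x_{-i}$ via \eqref{rel1}, becomes precisely the desired $j=i$ contribution $\bigl(\prod_{l=-n}^{-i}q^{2a_l}\bigr) q^{a_{1-i}+\cdots+a_{i-1}} x^{a+\eee_{-i}+\eee_i}$. The second summand is $\bigl(\prod_{l=-n}^{-i-1}q^{-a_l}\bigr) q^{a_{-i}+1}\lambda[a_{-i}]_q\, \Omega_{i+1} x^{a-\eee_{-i}}$; since the subscript $-i$ lies in the middle range for $\Omega_{i+1}$, \eqref{ek3} gives $x_{-i}\Omega_{i+1} = \Omega_{i+1}x_{-i}$, and a further application of \eqref{rel1} moves $x_{-i}$ through $x^{a-\eee_{-i}}$. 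The two opposite-sign products $\prod_{l=-n}^{-i-1}q^{-a_l}$ and $\prod_{l=-n}^{-i-1}q^{a_l}$ cancel, and using $\lambda[a_{-i}]_q = q^{a_{-i}}-q^{-a_{-i}}$ this contribution collapses to $(q^{2a_{-i}+1}-q)\Omega_{i+1}x^a$. Combining with the leftover $q\Omega_{i+1}x^a$ yields $q^{2a_{-i}+1}\Omega_{i+1}x^a$.

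Finally, applying the inductive hypothesis to $\Omega_{i+1}x^a$, the factor $q^{2a_{-i}+1}$ combines with the inductive prefactor $\prod_{l=-n}^{-i-1}q^{2a_l}$ and the exponent shift $q^{j-i-1}\mapsto q^{j-i}$ to produce $\bigl(\prod_{l=-n}^{-i}q^{2a_l}\bigr)\sum_{j=i+1}^{n}q^{j-i+(a_{1-j}+\cdots+a_{j-1})} x^{a+\eee_{-j}+\eee_j}$, which together with the $j=i$ contribution assembles exactly into the claimed sum. The main obstacle is the careful bookkeeping of the $q$-power prefactors; the whole computation hinges on the two cancellations isolated above (the product cancellation and the collapse $(q^{2a_{-i}+1}-q)+q = q^{2a_{-i}+1}$), and once these are spotted the remainder is a routine merge of sums.
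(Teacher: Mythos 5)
Your proof is correct and takes essentially the same route as the paper's: a downward induction on $i$ from $n$ to $1$ driven by the recursion \eqref{omega}, arriving at the same key intermediate identity $\Omega_i x^{a}=\big(\text{the } j=i \text{ term}\big)+q^{2a_{-i}+1}\,\Omega_{i+1}x^{a}$. The only difference is cosmetic: the paper commutes $\Omega_{i-1}$ into the middle of the monomial via \eqref{ek1} and \eqref{ek3} so that no $\lambda$-term ever appears, whereas you expand $x_i x^{a}$ by \eqref{rel2} and then cancel the resulting $\lambda[a_{-i}]_q\Omega_{i+1}$ contribution against $q\,\Omega_{i+1}x^{a}$ --- slightly longer, but equally valid bookkeeping.
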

\begin{proof}
We prove this lemma by induction on $i$ from $n$ to $1$. From \eqref{ek1} and \eqref{ek3}, we have \begin{gather*}
\Omega_nx^{a}=q^{2a_{-n}}x_{-n}^{a_{-n}}\Omega_n x_{1-n}^{a_{1-n}}\cdots x_{n}^{a_{n}}
=q^{2a_{-n}}x_{-n}^{a_{-n}+1}x_{n} x_{1-n}^{a_{1-n}}\cdots x_{n}^{a_{n}}\\
\hphantom{\Omega_nx^{a}}{} =q^{2a_{-n}}q^{a_{1-n}+\cdots +a_{n-1}}x^{a+\eee_{-n}+\eee_{n}}.
\end{gather*} So the formula \eqref{rel4} holds for $i=n$. Suppose \eqref{rel4} holds for $i>1$. Then from \eqref{ek1}, \eqref{omega} and \eqref{ek3}, we obtain
\begin{gather*}
\Omega_{i-1}x^{a}=\left(\prod_{l=-n}^{1-i}q^{2a_l}\right)x_{-n}^{a_{-n}}\cdots x_{1-i}^{a_{1-i}}\Omega_{i-1} x_{2-i}^{a_{2-i}}\cdots x_{n}^{a_{n}}\\
\hphantom{\Omega_{i-1}x^{a}}{} =\left(\prod_{l=-n}^{1-i}q^{2a_l}\right)x_{-n}^{a_{-n}}\cdots x_{1-i}^{a_{1-i}}(x_{1-i}x_{i-1}+q\Omega_{i}) x_{2-i}^{a_{2-i}}\cdots x_{n}^{a_{n}}\\
\hphantom{\Omega_{i-1}x^{a}}{} =\left(\prod_{l=-n}^{1-i}q^{2a_l}\right)q^{a_{2-i}+\cdots+a_{i-2}} x^{a+\eee_{1-i} +\eee_{i-1}}+ q^{1+2a_{1-i}}\Omega_{i}x^{a}.
\end{gather*}
The induction hypothesis completes the proof.
\end{proof}

\section[Quantum dif\/ferential operators on $\eks$]{Quantum dif\/ferential operators on $\boldsymbol{\eks}$}\label{section3}

\textbf{3.1.} We def\/ine some quantum analogs of dif\/ferential operators on $\ek$.
\begin{Definition}\label{defD}For any normal monomial $x^{a}$ and $i\in I$, set
 \begin{gather*} \partial_i.x^{a}:=[a_i]_qx^{a-\eee_i},\\
x_{i_L}.x^{a}:=x_ix^{a},\\
x_{i_R}.x^{a}:=x^{a}x_i,\\
\mu_{i}.x^{a}:=q^{a_i}x^{a},\\
\mu_{i}^{-1}.x^{a}:=q^{-a_i}x^{a}.
\end{gather*}
Let $\operatorname{Dif\/f}(\ek)$ be the unital algebra of quantum dif\/ferential operators on $\ek$ generated by $\partial_i$, $x_{i_L}$, $x_{i_R}$, $\mu_{i}$ and $\mu_{i}^{-1}$ with $i\in I$. This algebra can be described precisely as the smash product of a quantum group $\mathfrak D_q$ and the symplectic space $\ek$, where the associative algebra $\mathfrak D_q$ generated by $\partial_i$'s ($i\in I$) as well as $\mu_i$'s ($i\in I$), acting on $\ek$, is a Hopf algebra. For a detailed treatment for type~$A$ case, one can refer to~\cite{nh}, where the quantum dif\/ferential operators algebra is the (modif\/ied) quantum Weyl algebra (of type~$A$). Since we only use the actions of these quantum dif\/ferential operators on $\ek$, we omit the explicit presentation of $\operatorname{Dif\/f}(\ek)$.
\end{Definition}

Since $\mu_k\mu_l=\mu_l\mu_k$, we write \begin{gather*}\tau_{i}:=\prod_{j=i}^{n}\mu_{j} \qquad\text{and}\qquad \tau_{-i}:=\prod_{j=-n}^{-i}\mu_{j} \end{gather*} for $i\in I^+$. Now we def\/ine a subalgebra of $\operatorname{Dif\/f}(\ek)$.
\begin{Definition}\label{defU}
 For $i\in I^+$ with $i\geq 2$, set
 \begin{gather*}
e_1:=[2]_q^{-1}q^{-1}\mu_1^{-1}\big(\tau_{-2}^{-1}x_{-1_L}+q^2\tau_2^{-1}x_{-1_R}\big)\partial_1,\\
f_1:=[2]_q^{-1}q^{-1}\mu_{-1}^{-1}\big(\tau_2^{{-1}}x_{1_R}+q^{2}\tau_{-2}^{-1}x_{1_L}\big)\partial_{-1},\\
k_1:=\mu_{-1}^2\mu_1^{-2},\\
e_i:=\mu_{i-1}\mu_i^{-1}\tau_{-i-1}^{-1}x_{-i_L}\partial_{1-i} -\tau_i^{-1}x_{i-1_R}\partial_i,\\
f_i:=-\mu_{1-i}\mu_{-i}^{-1}\tau_{i+1}^{-1}x_{i_R}\partial_{i-1} +\tau_{-i}^{-1}x_{1-i_L}\partial_{-i},\\
k_i:=\mu_{-i}\mu_{1-i}^{-1}\mu_{i-1}\mu_i^{-1}.
\end{gather*}
Let $U_q^{2n}$ be the subalgebra of $\operatorname{Dif\/f}(\ek)$ generated by $\{e_i,f_i,k_i, k_i^{-1}\,|\, i\in I^{+}\}$.
\end{Definition}

Applying the operators def\/ined in Def\/inition \ref{defU} to any normal monomial $x^{a}$, and using Def\/inition \ref{defD} and \eqref{rel1}--\eqref{rel3}, we get
\begin{gather}
\label{e1}e_1.x^{a}=[a_1]_{{q^2}}x^{a+\eee_{-1}-\eee_1}
+\lambda\left[{a_{1}}\atop 2\right]_q q^{2-2(a_{-n}+\cdots+a_{-2})}\Omega_2x^{a-2\eee_{1}},\\
\label{f1}f_1.x^{a}=[a_{-1}]_{{q^2}}x^{a-\eee_{-1}+\eee_1}
+\lambda\left[{a_{-1}}\atop 2\right]_q q^{2-2(a_{-n}+\cdots+a_{-2})}\Omega_2x^{a-2\eee_{-1}},\\
\label{k1}k_1.x^{a}=q^{2(a_{-1}-a_{1})}x^{a},\\
\label{ei}e_i.x^{a}=q^{a_{i-1}-a_{i}}[a_{1-i}]_q x^{a+\eee_{-i}-\eee_{1-i}}-[a_i]_q x^{a+\eee_{i-1}-\eee_i},\\
\label{fi}f_i.x^{a}=[a_{-i}]_q x^{a-\eee_{-i}+\eee_{1-i}}-q^{a_{1-i}-a_{-i}}[a_{i-1}]_q x^{a-\eee_{i-1}+\eee_i},\\
\label{ki}k_i.x^{a}=q^{a_{-i}-a_{1-i}+a_{i-1}-a_{i}}x^{a},
\end{gather}
for $1<i\leq n$.

The following two lemmas will be used later.
\begin{Lemma}\label{lem1}For $i,j\in I^+$, we have
\begin{gather*}
e_i.\Omega_j=\begin{cases}
 0, &i\neq j,\\
 -x_{-j}x_{j-1},& i=j>1,\\
 x_{-1}^2,& i=j=1,
\end{cases} \qquad\text{and}\qquad f_i.\Omega_j=\begin{cases}
 0, &i\neq j,\\
 x_{1-j}x_{j},& i=j>1,\\
 x_{1}^2,& i=j=1.
\end{cases}
\end{gather*}\end{Lemma}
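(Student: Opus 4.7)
The plan is to apply the operators $e_i$ and $f_i$ termwise to the expansion
\[
\Omega_j=\sum_{k=j}^{n}q^{k-j}\,x_{-k}x_k,
\]
using the explicit action formulas \eqref{e1}--\eqref{fi} on the normal monomial $x^{a}$ with $a=\eee_{-k}+\eee_{k}$ (all other entries zero). The point is that for such a simple $a$, the multi-indices $a_{-i},a_{1-i},a_{i-1},a_i$ are nonzero only for very few values of $k$, so only two of the terms $x_{-k}x_k$ in $\Omega_j$ can produce a nonzero result, and their contributions either cancel or leave a single residual term.

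First I would handle the case $i>1$ using \eqref{ei}. For $a=\eee_{-k}+\eee_{k}$, the coefficients $[a_{1-i}]_q$ and $[a_i]_q$ in $e_i.x^{a}$ vanish unless $k=i-1$ or $k=i$. A direct substitution gives
\[
e_i.(x_{-i}x_i)=-x_{-i}x_{i-1},\qquad e_i.(x_{-(i-1)}x_{i-1})=q\,x_{-i}x_{i-1}.
\]
Both monomials appear in $\Omega_j$ precisely when $j\le i-1$, in which case the weighted sum is
\[
q^{i-j}(-x_{-i}x_{i-1})+q^{i-1-j}(q\,x_{-i}x_{i-1})=0.
\]
If $j=i$, only the $k=i$ term is present, giving $-x_{-i}x_{i-1}$; if $j>i$, neither term is present. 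The computation for $f_i$ is symmetric using \eqref{fi}: one checks $f_i.(x_{-i}x_i)=x_{1-i}x_i$ and $f_i.(x_{-(i-1)}x_{i-1})=-q\,x_{1-i}x_i$, and the same weighted-sum cancellation plus boundary analysis yields $f_i.\Omega_j=\delta_{ij}\,x_{1-j}x_j$ for $i>1$.

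For the case $i=1$ I would use \eqref{e1} and \eqref{f1}. On $a=\eee_{-k}+\eee_k$ the coefficient $[a_1]_{q^2}$ is nonzero only for $k=1$, and $\bigl[{a_1\atop 2}\bigr]_q$ always vanishes, so $e_1$ annihilates $x_{-k}x_k$ for $k\ge2$ and sends $x_{-1}x_1$ to $x_{-1}^2$. Since $x_{-1}x_1$ enters $\Omega_j$ only when $j=1$ (with coefficient $1$), this yields $e_1.\Omega_1=x_{-1}^2$ and $e_1.\Omega_j=0$ for $j\ge2$. The same reasoning with $a_{-1}$ in place of $a_1$ gives the $f_1$ statement.

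The computation is essentially bookkeeping, so the only real care needed is to track the ranges of $k$ for which each term in \eqref{ei}/\eqref{fi} survives; the exponent $q^{a_{i-1}-a_i}$ in \eqref{ei} and $q^{a_{1-i}-a_{-i}}$ in \eqref{fi} must be read off exactly from $a=\eee_{-k}+\eee_k$ for $k=i-1$ vs.\ $k=i$, and these are precisely what make the two contributions cancel when $j<i$. I expect this cancellation — and the separate handling of the boundary values $k=j$ and $k=j-1$ which determine whether the residual monomial appears — to be the only subtle step.
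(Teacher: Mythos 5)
Your proposal is correct and follows exactly the route the paper takes: the paper's proof is the one-line observation that the lemma "follows immediately from \eqref{e1} and \eqref{fi}," i.e., applying the explicit action formulas termwise to $\Omega_j=\sum_{k=j}^n q^{k-j}x_{-k}x_k$, which is precisely the bookkeeping you carry out. Your computed values $e_i.(x_{-i}x_i)=-x_{-i}x_{i-1}$, $e_i.(x_{1-i}x_{i-1})=q\,x_{-i}x_{i-1}$ (and their $f_i$ analogues), the resulting cancellation for $j<i$, and the vanishing of $\bigl[{1\atop 2}\bigr]_q$ in the $i=1$ case all check out against \eqref{e1}--\eqref{fi}.
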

\begin{proof} It follows immediately from \eqref{e1} and \eqref{fi}.
\end{proof}

\begin{Lemma}\label{mod alg} For any two normal monomials $x^{a}, x^{b}$ and $i\in I^+$ we have
\begin{gather*}
k_i^{\pm 1}.\big(x^{a}x^{b}\big)=\big(k_i^{\pm 1}.x^{a}\big)\big(k_i^{\pm 1}.x^{b}\big),\\
e_i.\big(x^{a}x^{b}\big)= \big(e_i.x^{a}\big)\big(k_i.x^{b}\big)+x^{a}\big(e_i.x^{b}\big),\\
f_i.\big(x^{a}x^{b}\big)= \big(f_i.x^{a}\big)x^{b}+\big(k_i^{-1}.x^{a}\big)\big(f_i.x^{b}\big).
\end{gather*}
\end{Lemma}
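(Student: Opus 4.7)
The plan is to prove all three identities by induction on $|b|$, with the trivial base case $|b|=0$ (since $e_i.1=f_i.1=0$ and $k_i^{\pm 1}.1=1$).

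For $k_i^{\pm 1}$ the argument is essentially grading-theoretic. Each $\mu_j$ acts as a diagonal scalar on normal monomials, so $k_i$ acts on $x^a$ by $q$ to a power $\mathrm{wt}_i(a)$ which is a fixed linear functional of the exponent vector $a$, namely $2(a_{-1}-a_{1})$ for $i=1$ and $a_{-i}-a_{1-i}+a_{i-1}-a_i$ for $i\geq 2$. The key point is that this linear functional descends to a well-defined grading on $\mathcal{X}$: relation \eqref{ek1} preserves the multiset of indices, and relation \eqref{ek2} is $\mathrm{wt}_i$-preserving because every summand $x_{-j}x_j$ that appears in $\Omega_{i+1}$ has $\mathrm{wt}_i=0$. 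Hence when $x^a x^b$ is re-expanded in the normal monomial basis, every term carries the same $k_i$-eigenvalue $q^{\mathrm{wt}_i(a)+\mathrm{wt}_i(b)}$, which is precisely $(k_i.x^a)(k_i.x^b)$.

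For $e_i$ and $f_i$ I would first verify the base case $x^b=x_j$ for a single generator $j\in I$, and then bootstrap to arbitrary normal $x^b$. For the base case, reduce $x^a x_j$ to a linear combination of normal monomials via the right-multiplication formulas \eqref{rel1} and \eqref{rel3} (distinguishing $j>0$, where no $\Omega$-tail appears, from $j<0$, where commuting $x_{-|j|}$ through $x_{|j|}^{a_{|j|}}$ produces an $\Omega_{|j|+1}$-contribution), and then apply \eqref{e1}--\eqref{fi} term by term. The right-hand side $(e_i.x^a)(k_i.x_j)+x^a(e_i.x_j)$ is computed directly: the first piece is a scalar multiple of $(e_i.x^a)x_j$ because $k_i.x_j=q^{\mathrm{wt}_i(\eee_j)}x_j$, while the second piece is nonzero only when $j\in\{-i,1-i,i-1,i\}$ (or $j=\pm 1$ when $i=1$). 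Comparing case by case on $j$ yields the identity. With the base case in hand, the inductive step is formal: writing $x^b=x^{b'}x_j$ with $x_j$ the rightmost generator in normal order, expanding $x^a x^{b'}=\sum_c\gamma_c x^c$ in the normal basis, and applying the base case to each $e_i.(x^c x_j)$ gives
\begin{gather*}
e_i.(x^a x^b)=\bigl(e_i.(x^a x^{b'})\bigr)(k_i.x_j)+(x^a x^{b'})(e_i.x_j),
\end{gather*}
and the inductive hypothesis on $x^a,x^{b'}$ together with the already-proven $k_i$ multiplicativity, plus the base case applied to $e_i.(x^{b'}x_j)=e_i.x^b$, reassembles this into $(e_i.x^a)(k_i.x^b)+x^a(e_i.x^b)$. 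The $f_i$ identity is treated identically.

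The main obstacle is the base case when $i=1$ and $j=\pm 1$. Here the $\Omega_2$-tail appearing in \eqref{e1}/\eqref{f1} must mesh correctly with the separate $\Omega_2$-contributions produced by \eqref{rel2}--\eqref{rel3} upon commuting $x_{\pm 1}$ past $x_{\mp 1}^{a_{\mp 1}}$ inside $x^a$; verifying the resulting cancellations of $q$-powers and $q$-binomials --- using the Pascal-type identity $[a_{\pm 1}+1]_{q^2}=[a_{\pm 1}]_{q^2}+q^{2a_{\pm 1}}$ and its companions --- is the most delicate bookkeeping in the argument, while the remaining cases reduce to checking that the commutation scalars from \eqref{rel1}--\eqref{rel3} combine with $q^{\mathrm{wt}_i(\eee_j)}$ to match the formulas \eqref{ei}--\eqref{fi}.
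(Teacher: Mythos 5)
Your proof is correct, and its overall engine is the mirror image of the paper's: the paper inducts on $|a|$, peeling a single generator $x_j$ off the \emph{left} of the left factor (so its base case $e_i.(x_jx^b)$ rests on the left-multiplication formulas \eqref{rel1}, \eqref{rel2} together with \eqref{rel4} and Lemma~\ref{lem1}), whereas you induct on $|b|$, peeling $x_j$ off the \emph{right} of the right factor (so your base case $e_i.(x^ax_j)$ rests on the right-multiplication formulas \eqref{rel1}, \eqref{rel3}). The inductive steps are identical in spirit --- expand the length-$m$ product into normal monomials, apply the single-generator case termwise, and reassemble using the hypothesis plus $k_i$-multiplicativity --- and your bookkeeping checks out: the three-term expansion recombines exactly because $(k_i.x^{b'})(k_i.x_j)=k_i.x^b$ and the base case applied to $x^{b'},x_j$ gives back $e_i.x^b$. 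You also leave the lengthy single-generator verification at the same level of detail as the paper does (the paper explicitly omits it as ``straightforward and lengthy''), and you correctly isolate where the real work lies, namely $i=1$, $j=\pm1$, where the $\Omega_2$-tails of \eqref{e1}, \eqref{f1} must cancel against those produced by \eqref{rel2}--\eqref{rel3}. The one genuinely different ingredient is your treatment of $k_i^{\pm1}$: rather than folding it into the induction as the paper does, you observe that the weight functional $\mathrm{wt}_i$ is preserved by both defining relations \eqref{ek1} and \eqref{ek2} --- every summand $x_{-j}x_j$ of $\Omega_{i+1}$ has $\mathrm{wt}_i=0$ --- so $\mathcal{X}$ is $\mathrm{wt}_i$-graded and $k_i$ is the associated grading automorphism. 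This is cleaner and more conceptual than the paper's inductive verification, and it buys you the $k_i$-multiplicativity as a standalone fact that the $e_i$, $f_i$ inductions can then freely invoke.
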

\begin{proof}
We prove this lemma by induction on $|a|$. For $|a|=1$, write $x^{a}=x_j$, $j\in I$. The as\-ser\-tion of this lemma for $|a|=1$ can be derived from the relations \eqref{rel1} \eqref{rel2} \eqref{rel4}, \eqref{e1}--\eqref{ki} and Lemma~\ref{lem1} directly. We omit this straightforward and lengthy verif\/ication. Suppose that the lemma holds for any normal monomial $x^{a}$ with $|a|=m$. Let $x^{c}$ be a normal monomial with $|c|=m+1$. We can write $x^{c}=x_jx^{a}$, where $|a|=m$ and $j$ is the smallest index in $(c_{-n},\ldots,c_n)$ such that $c_j\neq 0$. Since $x^{a}x^{b}$ can be written as a linear combination of normal monomials, by the induction hypothesis, we get
\begin{gather*}
k_i.\big(x^{c}x^{b}\big)= k_i.\big(x_jx^{a}x^{b}\big)
=(k_i.x_j)\big(k_i.\big(x^{a}x^{b}\big)\big)= (k_i.x_j)\big(k_i.x^{a}\big)\big(k_i.x^{b}\big) =\big(k_i.x^{c}\big)\big(k_i.x^{b}\big).
\end{gather*}
Then \begin{gather*}
e_i.\big(x^{c}x^{b}\big)=e_i.\big(x_jx^{a}x^{b}\big)= (e_i.x_j)\big(k_i.\big(x^{a}x^{b}\big)\big)+x_j\big(e_i.\big(x^{a}x^{b}\big)\big)\\
\hphantom{e_i.\big(x^{c}x^{b}\big)}{}=(e_i.x_j)\big(k_i.x^{a}\big)\big(k_i.x^{b}\big) +x_j\big(e_i.x^{a}\big)\big(k_i.x^{b}\big)+x_jx^{a}\big(e_i.x^{b}\big)\\
\hphantom{e_i.\big(x^{c}x^{b}\big)}{}=\big(e_i.\big(x_jx^{a}\big)\big)\big(k_i.x^{b}\big)+x^{c}\big(e_i.x^{b}\big) =\big(e_i.x^{c}\big)\big(k_i.x^{b}\big)+x^{c}\big(e_i.x^{b}\big).
\end{gather*}
Other relations can be proved similarly.
\end{proof}

The following lemma can be easily checked by def\/inition.
\begin{Lemma}\label{q-num}
For any $m\in\mathbb{Z}$ we have
\begin{gather}
\label{q-num1}[m+1]_q=q[m]_q+q^{-m}=q^{-1}[m]_q+q^m,\\
\label{q-num2} \left[{m+1}\atop 2\right]_q-\left[{m}\atop 2\right]_q=[m]_{{q^2}},\\
\label{q-num3}\left[{m+1}\atop 2\right]_q-q^2\left[{m}\atop 2\right]_q=q^{1-m}[m]_q .
\end{gather}
\end{Lemma}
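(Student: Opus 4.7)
The plan is to verify all three identities by direct computation from the definition $[m]_q = (q^m - q^{-m})/(q-q^{-1})$, with each subsequent identity reduced to the previous one to minimize bookkeeping.

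First I would establish \eqref{q-num1}. Starting from the definition,
\begin{gather*}
q[m]_q + q^{-m} = \frac{q^{m+1} - q^{-m+1}}{q-q^{-1}} + \frac{q^{-m}(q-q^{-1})}{q-q^{-1}} = \frac{q^{m+1} - q^{-m-1}}{q-q^{-1}} = [m+1]_q,
\end{gather*}
and symmetrically $q^{-1}[m]_q + q^m$ collapses to the same expression by pulling $q^m(q-q^{-1})$ into the numerator. This gives both sides of \eqref{q-num1} at once and, as a by-product, the useful relation $[m-1]_q = q[m]_q - q^m$ obtained by shifting $m \mapsto m-1$ in the second form.

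Next, for \eqref{q-num2} and \eqref{q-num3} I would factor out the common $[m]_q/[2]_q$:
\begin{gather*}
\left[{m+1}\atop 2\right]_q - \left[{m}\atop 2\right]_q = \frac{[m]_q\bigl([m+1]_q - [m-1]_q\bigr)}{[2]_q},\qquad \left[{m+1}\atop 2\right]_q - q^2\left[{m}\atop 2\right]_q = \frac{[m]_q\bigl([m+1]_q - q^2[m-1]_q\bigr)}{[2]_q}.
\end{gather*}
For \eqref{q-num2} I substitute $[m-1]_q = q[m]_q - q^m$ into the bracket to get $[m+1]_q - [m-1]_q = q^m + q^{-m}$, and then $(q^m - q^{-m})(q^m+q^{-m})/\bigl((q-q^{-1})(q+q^{-1})\bigr) = [m]_{q^2}$. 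For \eqref{q-num3} I would first rewrite $[m-1]_q = [2]_q[m]_q - [m+1]_q$ (which follows by adding the two forms of \eqref{q-num1}), yielding $[m+1]_q - q^2[m-1]_q = q[2]_q\bigl([m+1]_q - q[m]_q\bigr) = q[2]_q q^{-m}$ by \eqref{q-num1}, and the $[2]_q$'s cancel.

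There is no genuine obstacle here: the identities are purely formal manipulations on the exponents of $q$. The only thing to be a little careful about is keeping the two forms of \eqref{q-num1} straight and choosing the right one when reducing \eqref{q-num3}, since the naive substitution leaves a residue that must be recognized as a multiple of $[2]_q$.
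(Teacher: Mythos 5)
Your proof is correct and takes essentially the same approach the paper intends: the paper gives no written proof, stating only that the lemma "can be easily checked by definition," and your direct verification from $[m]_q=\frac{q^m-q^{-m}}{q-q^{-1}}$ is precisely that check. All your steps are valid for every $m\in\mathbb{Z}$ under the paper's definition of the $q$-binomial coefficients, since $\left[{m}\atop 2\right]_q=\frac{[m]_q[m-1]_q}{[2]_q}$ holds identically, and your auxiliary identities $[m-1]_q=q[m]_q-q^m$ and $[m+1]_q+[m-1]_q=[2]_q[m]_q$ are both correct consequences of the two forms of the first identity.
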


Now we state one of our main theorems.
\begin{Theorem}\label{serre} The generators $e_i$, $f_i$, $k_i$, $k_i^{-1}$, $i\in I^+$, of $U_q^{2n}$ satisfy the relations \mbox{\eqref{serre1}--\eqref{serre5}} after replacing $E_i$, $F_i$, $K_i$, $K_i^{-1}$ by $e_i$, $f_i$, $k_i$, $k_i^{-1}$, respectively. Hence, there is a unique surjective algebra homomorphism $\Psi\colon U_q({\mathfrak{sp}}_{2n})\rightarrow U_q^{2n}$ mapping $E_i$, $F_i$, $K_i$, $K_i^{-1}$ to $e_i$, $f_i$, $k_i$, $k_i^{-1}$, respectively.
\end{Theorem}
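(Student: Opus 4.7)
The plan is to verify each of the defining relations \eqref{serre1}--\eqref{serre5} as an operator identity in $\operatorname{Dif\/f}(\ek)$ by applying both sides to an arbitrary normal monomial $x^{a}$; since $\{x^{a}\}$ is a $\k$-basis of $\ek$, this suffices. All the needed ingredients are already in place: the explicit action formulas \eqref{e1}--\eqref{ki}, the multiplication rules \eqref{rel1}--\eqref{rel4}, Lemma~\ref{lem1} on the action of $e_i,f_i$ on $\Omega_j$, and the $q$-numerical identities of Lemma~\ref{q-num}. The Cartan relations \eqref{serre1} are immediate because each $k_i$ is a product of pairwise commuting diagonal operators $\mu_j^{\pm 1}$. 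For \eqref{serre2} I would observe that $e_j.x^{a}$ and $f_j.x^{a}$ are sums of monomials $x^{a'}$ with $a'$ differing from $a$ by a single fixed weight; applying $k_i$ to each such $x^{a'}$ produces a scalar that can be read off \eqref{ki} and matches $q_i^{\pm a_{ij}}$ after a short bookkeeping.

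For \eqref{serre3} I would split into cases. When $i\neq j$, the formulas \eqref{e1}, \eqref{f1}, \eqref{ei}, \eqref{fi} show that $e_i$ and $f_j$ alter disjoint pairs of coordinates, so the commutator vanishes on $x^{a}$. When $i=j>1$, a short computation using \eqref{ei} and \eqref{fi} produces four cross-terms in $e_if_i-f_ie_i$ whose coefficients combine, via \eqref{q-num1}, into $[a_{-i}-a_{1-i}+a_{i-1}-a_i]_q\,x^{a}$, which matches $(k_i-k_i^{-1})/(q-q^{-1}).x^{a}$ by \eqref{ki}. The delicate subcase is $i=j=1$: here \eqref{e1} and \eqref{f1} introduce $\Omega_2$-tails with $q$-binomial coefficients $\left[{a_{\pm 1}}\atop 2\right]_q$. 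My plan is to push $e_1$ and $f_1$ past $\Omega_2$ using Lemma~\ref{lem1} and invoke \eqref{q-num2}--\eqref{q-num3} to show that all $\Omega_2$-contributions cancel in pairs; the surviving terms should then agree with $(k_1-k_1^{-1})/(q^2-q^{-2}).x^{a}$.

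The Serre relations \eqref{serre4}, \eqref{serre5} are the main obstacle. When $a_{ij}=0$ the generators $e_i$, $e_j$ act on disjoint index ranges and commute, dispensing with all non-adjacent pairs. For $|i-j|=1$ with $i,j\geq 2$ (so $a_{ij}=-1$), I would verify the cubic relation by termwise bookkeeping on $x^{a}$: both $e_i$ and $e_j$ contribute only pure shift monomials, and the $[2]_q$ coefficient is absorbed by \eqref{q-num1}. I expect the hard part to be the subcases involving the long simple root $\alpha_1$---the cubic relation for $(i,j)=(1,2)$ (with coefficient $[2]_{q^2}$) and, especially, the quartic Serre relation for $(i,j)=(2,1)$, where $1-a_{21}=3$ and repeated applications of $e_1$ produce $\Omega_2$-tails that interact nontrivially with $e_2$. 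My plan for these is to expand each monomial $e_i^{t}e_je_i^{1-a_{ij}-t}.x^{a}$, group the resulting terms by their shift signature (pure shifts versus $\Omega_2$-shifts), and check termwise cancellation via Lemmas~\ref{lem1} and~\ref{q-num}; the relations among the $f_i$'s then follow by a symmetric argument.

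Once all defining relations have been verified in $U_q^{2n}$, the universal property of $U_q({\mathfrak{sp}}_{2n})$ yields a unique algebra homomorphism $\Psi$ with the prescribed action on generators, and $\Psi$ is surjective because its image already contains the algebra generators of $U_q^{2n}$.
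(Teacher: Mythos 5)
Your overall strategy --- verifying \eqref{serre1}--\eqref{serre5} by applying both sides to the basis of normal monomials $x^{a}$ --- is exactly the paper's, but two steps as you state them would fail. First, in \eqref{serre3} for $i\neq j$ you argue that $e_i$ and $f_j$ ``alter disjoint pairs of coordinates, so the commutator vanishes.'' That is false for adjacent indices: by \eqref{ei} and \eqref{fi}, $e_i$ shifts $(a_{-i},a_{1-i})$ or $(a_{i-1},a_i)$ while $f_{i+1}$ shifts $(a_{-i-1},a_{-i})$ or $(a_i,a_{i+1})$, so they share the coordinates $a_{\pm i}$, and coefficients such as $q^{a_{i-1}-a_i}[a_{1-i}]_q$ change under the other operator's shift; $[e_i,f_{i+1}].x^{a}=0$ holds only after a genuine termwise cancellation, not by disjointness of supports. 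The case $(i,j)=(1,2)$ is worse: the $\Omega_2$-tail in \eqref{e1} interacts with $f_2$ nontrivially, since $f_2.\Omega_2=x_{-1}x_{2}\neq 0$ by Lemma~\ref{lem1}, so extra cross terms appear that must cancel against contributions from $e_1$ acting after $f_2$ has shifted $a_{\pm 1}$; disjoint-support reasoning cannot see this.

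Second, ``pushing $e_1$ and $f_1$ past $\Omega_2$'' is not something Lemma~\ref{lem1} can do: that lemma only evaluates $e_i.\Omega_j$ and $f_i.\Omega_j$ as elements of $\ek$. To split the action on the product $\Omega_2 x^{b}$ you need the skew-derivation identity of Lemma~\ref{mod alg}, namely $e_i.\big(x^{a}x^{b}\big)=\big(e_i.x^{a}\big)\big(k_i.x^{b}\big)+x^{a}\big(e_i.x^{b}\big)$ together with $k_i^{\pm 1}.\Omega_2=\Omega_2$, which is precisely the tool the paper's proof invokes both in the $[e_1,f_1]$ computation and in the Serre verification (where it yields $e_{1,2}.\big(\Omega_2 x^{a}\big)=\Omega_2\big(e_{1,2}.x^{a}\big)+(e_1e_2.\Omega_2)\big(k_1k_2.x^{a}\big)$). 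Without citing it, you would have to expand $\Omega_2x^{b}$ into normal monomials via \eqref{rel4} and recombine by hand, which your proposal does not provide. Once these repairs are made, your plan coincides with the paper's proof, except that where you propose brute-force bookkeeping for the Serre relations, the paper first compresses them into the nested bracket identities $[e_1,[e_1,e_2]_{q^2}]_{q^{-2}}=0$, $[e_2,[[e_1,e_2]_{q^2},e_2]]_{q^{2}}=0$ and $[e_i,[e_i,e_{i\pm 1}]_q]_{q^{-1}}=0$, computes $[e_1,e_2]_{q^2}.x^{a}$ once, and reuses it --- a streamlining of the same method, not a different one. Your final paragraph (universal property plus surjectivity onto the generators of $U_q^{2n}$) is fine as stated.
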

\begin{proof} The relations \eqref{serre1} are clear. Using \eqref{e1}--\eqref{ki}, the relations \eqref{serre2} can be easily checked. For \eqref{serre3}, we only prove the case $i=j=1$, the others can be checked similarly. For any normal monomial $x^{a}$, using \eqref{e1}--\eqref{k1} and Lemmas~\ref{lem1} and~\ref{mod alg}, we get
\begin{gather*}
e_1f_1.x^{a}=[a_{-1}]_{{q^2}}e_1.x^{a-\eee_{-1}+\eee_1} +\lambda\left[{a_{-1}}\atop 2\right]_q q^{2-2(a_{-n}+\cdots+a_{-2})}e_1.\big(\Omega_2x^{a-2\eee_{-1}}\big)\\
\hphantom{e_1f_1.x^{a}}{}
=[a_{-1}]_{{q^2}}e_1.x^{a-\eee_{-1}+\eee_1} +\lambda\left[{a_{-1}}\atop 2\right]_q q^{2-2(a_{-n}+\cdots+a_{-2})}\Omega_2\big(e_1.x^{a-2\eee_{-1}}\big)\\
\hphantom{e_1f_1.x^{a}}{}
=[a_{-1}]_{{q^2}}[a_1+1]_{{q^2}}x^{a}\\
\hphantom{e_1f_1.x^{a}=}{} +\la\left(\left[{a_{1}+1}\atop 2\right]_q[a_{-1}]_{{q^2}} +\left[{a_{-1}}\atop 2\right]_q[a_1]_{{q^2}}\right) q^{2-2(a_{-n}+\cdots+a_{-2})}\Omega_2x^{a-\eee_{-1}-\eee_1}\\
\hphantom{e_1f_1.x^{a}=}{}
+\lambda^2\left[{a_{-1}}\atop 2\right]_q\left[{a_{1}}\atop 2\right]_qq^{4-4(a_{-n}+\cdots+a_{-2})} \Omega_2\Omega_2x^{a-2\eee_{-1}-2\eee_1}
\end{gather*}
and \begin{gather*}
f_1e_1.x^{a}=[a_1]_{{q^2}}f_1.x^{a+\eee_{-1}-\eee_1} +\lambda\left[{a_{1}}\atop 2\right]_q q^{2-2(a_{-n}+\cdots+a_{-2})} f_1.\big(\Omega_2x^{a-2\eee_{1}}\big)\\
\hphantom{f_1e_1.x^{a}}{}
=[a_1]_{{q^2}}f_1.x^{a+\eee_{-1}-\eee_1} +\lambda\left[{a_{1}}\atop 2\right]_q q^{2-2(a_{-n}+\cdots+a_{-2})} \big(k_1^{-1}.\Omega_2\big)\big(f_1.x^{a-2\eee_{1}}\big)\\
\hphantom{f_1e_1.x^{a}}{}=[a_1]_{{q^2}}[a_{-1}+1]_{{q^2}}x^{a}\\
\hphantom{f_1e_1.x^{a}=}{} +\la\left(\left[{a_{-1}+1}\atop 2\right]_q[a_1]_{{q^2}}+\left[{a_{1}}\atop 2\right]_q[a_{-1}]_{{q^2}}\right) q^{2-2(a_{-n}+\cdots+a_{-2})} \Omega_2x^{a-\eee_{-1}-\eee_1}\\
\hphantom{f_1e_1.x^{a}=}{}
+\la^2\left[{a_{-1}}\atop 2\right]_q\left[{a_{1}}\atop 2\right]_q q^{4-4(a_{-n}+\cdots+a_{-2})} \Omega_2\Omega_2x^{a-2\eee_{-1}-2\eee_1}.
\end{gather*}
Using \eqref{q-num1} and \eqref{q-num2}, we obtain
\begin{gather*}
[e_1,f_1].x^{a}=\big([a_{-1}]_{{q^2}}[a_1+1]_{{q^2}} -[a_1]_{{q^2}}[a_{-1}+1]_{{q^2}}\big)x^{a}\\
\hphantom{[e_1,f_1].x^{a}}{} =\big([a_{-1}]_{{q^2}}q^{2a_1}-[a_1]_{{q^2}}q^{2a_{-1}}\big)x^{a} =[a_{-1}-a_1]_{{q^2}}x^{a}.
\end{gather*}
Since $q_1=q^2$,
\begin{gather*}
\frac{k_1-k_1^{-1}}{q_1-q_1^{-1}}.x^{a} =\frac{q^{2(a_{-1}-a_1)}-q^{-2(a_{-1}-a_1)}}{q^2-q^{-2}}x^{a} =[a_{-1}-a_1]_{{q^2}}x^{a}.
\end{gather*}
Hence \begin{gather*}[e_1,f_1]=\frac{k_1-k_1^{-1}}{q_1-q_1^{-1}}.\end{gather*}

Consider the f\/irst Serre relation \eqref{serre4}. For the case $i=1$ and $j=2$, we need to prove \begin{gather*}e_1^2e_2-[2]_{{q^2}}e_1e_2e_1+e_2e_1^2=0.\end{gather*}
Set $e_{1,2}:=[e_1,e_2]_{{q^2}}$. It is equivalent to show
\begin{align}\label{ser1}
 [e_1,e_{1,2}]_{{q^{-2}}}=0.
\end{align}
By \eqref{e1}, \eqref{ei}, \eqref{ki} and Lemmas \ref{lem1}--\ref{q-num}, we get
\begin{gather*}
e_{1,2}.x^{a}=-[a_1]_q q^{2+a_{-1}-a_2}x^{a+\eee_{-2}-\eee_{1}} -[a_2]_q q^{-2a_1}x^{a+\eee_{-1}-\eee_2}\\
\hphantom{e_{1,2}.x^{a}=}{} -\lambda[a_1]_q[a_2]_q q^{3-2(a_{-n}+\cdots+a_{-2})-a_1} \Omega_2x^{a-\eee_1-\eee_2}.
\end{gather*}
From Lemmas \ref{lem1} and \ref{mod alg} and the relation $k_2e_1k_2^{-1}=q^{-2}e_1$ which has been proved before, it is easy to show
\begin{gather*}e_{1,2}.\big(\Omega_2x^{a}\big) =\Omega_2\big(e_{1,2}.x^{a}\big)+(e_1e_2.\Omega_2)\big(k_1k_2.x^{a}\big).\end{gather*}
Using the above two formulas
and the identity \begin{gather*}q[a_1]_q[a_1-1]_{{q^2}}- [a_1]_{{q^2}}[a_1-1]_q-\lambda\left[{a_{1}}\atop 2\right]_q q^{1-a_1}=0,\end{gather*} which is easy to check, we can verify $[e_1,e_{1,2}]_{{q^{-2}}}.x^{a}=0$ by direct computation. So the relation~\eqref{ser1} holds.

Consider the f\/irst Serre relation \eqref{serre4} for $i=2,j=1$. We need to prove \begin{gather*}e_2^3e_1-[3]_qe_2^2e_1e_2+[3]_qe_2e_1e_2^2-e_1e_2^3=0.\end{gather*}
 It is equivalent to show
\begin{gather}\label{e222}
 [e_2,[e_{1,2},e_2]]_{{q^{2}}}=0.
\end{gather}
In order to verify the f\/irst Serre relation \eqref{serre4} for $j=i\pm1$ and $i,j>1$, i.e.,
\begin{gather*}e_i^2e_{i\pm1}-[2]_q e_ie_{i\pm1}e_i+e_{i\pm1}e_i^2=0,\end{gather*} it is equivalent to show
\begin{align}\label{eiii}
 [e_i,[e_i,e_{i\pm1}]_q]_{{q^{-1}}}=0.
\end{align}
It is not hard to check \eqref{e222} and \eqref{eiii} after applying their left hand sides to $x^{a}$. For $|i-j|>2$, the f\/irst Serre relation is $[e_i,e_j]=0$, which is obvious.

The second Serre relation can be verif\/ied similarly.

This completes the proof.
\end{proof}

Due to the above theorem, we can realize the elements of the quantum group $U_q({\mathfrak{sp}}_{2n})$ as certain $q$-dif\/ferential operators on $\ek$. In other words, $\ek$ is a left $U_q({\mathfrak{sp}}_{2n})$-module.

Let ($H, m, \eta, \Delta, \eee, S$) be a Hopf algebra. Recall that an algebra $A$ is called a left \textit{$H$-module algebra} if $A$ is a left $H$-module, and the multiplication map and the unit map of $A$ are left $H$-module homomorphisms, that is,
\begin{gather}
\label{mod1} h.1_A=\eee(h)1_A, \\
\label{mod2} h.(a'a'')=\sum(h_{(1)}.a')(h_{(2)}.a''),
\end{gather}
for any $h\in H$, $a',a''\in A$, where $\Delta(h)=\sum h_{(1)}\otimes h_{(2)}$.

\begin{Theorem}The algebra $\ek$ is a left $U_q({\mathfrak{sp}}_{2n})$-module algebra.
\end{Theorem}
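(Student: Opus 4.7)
The plan is to bootstrap from Lemma~\ref{mod alg} and Theorem~\ref{serre}. The module-algebra axioms \eqref{mod1} and \eqref{mod2} must hold for every $h\in U_q({\mathfrak{sp}}_{2n})$, but I would reduce this to a check on the generators $e_i,f_i,k_i^{\pm1}$ by a standard Hopf-algebraic argument: the subset $S\subseteq U_q({\mathfrak{sp}}_{2n})$ consisting of those $h$ for which \eqref{mod1} and \eqref{mod2} hold is a $\k$-subalgebra. Linearity is clear, and closure under multiplication follows because $\Delta$ is an algebra map ($\Delta(hh')=\Delta(h)\Delta(h')$, $\eee(hh')=\eee(h)\eee(h')$), giving
\begin{gather*}
hh'.(a'a'')=h.\bigl(\textstyle\sum(h'_{(1)}.a')(h'_{(2)}.a'')\bigr)=\sum\bigl((hh')_{(1)}.a'\bigr)\bigl((hh')_{(2)}.a''\bigr),\\
hh'.1=h.(\eee(h')1)=\eee(h)\eee(h')1=\eee(hh')1.
\end{gather*}

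For \eqref{mod1} on generators I would just read off from Definition~\ref{defD} that $\partial_i.1=0$ and $\mu_i^{\pm1}.1=1$, so by the formulas in Definition~\ref{defU} one has $e_i.1=f_i.1=0$ and $k_i^{\pm1}.1=1$; these agree with the counit values $\eee(e_i)=\eee(f_i)=0$, $\eee(k_i^{\pm1})=1$ coming from \eqref{eee} via the surjection $\Psi$ of Theorem~\ref{serre}.

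For \eqref{mod2} on generators, this is exactly the content of Lemma~\ref{mod alg} after extending from normal monomials to all of $\ek$ by $\k$-linearity (immediate, as the normal monomials form a basis). Matching the three identities in Lemma~\ref{mod alg} against the coproducts \eqref{de1}, \eqref{de2} gives precisely \eqref{mod2} for $h\in\{e_i,f_i,k_i^{\pm1}\}$. Since these generate $U_q({\mathfrak{sp}}_{2n})$ as an algebra and $S$ is a subalgebra containing them, $S=U_q({\mathfrak{sp}}_{2n})$, which is the claim.

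There is no real obstacle here: the genuine computational work was done in Lemma~\ref{mod alg} (the Leibniz-type identities on monomials) and in verifying the Serre relations in Theorem~\ref{serre}. The present theorem is a packaging statement, so the only thing to be careful about is the bookkeeping in the subalgebra argument — in particular noting that $\Psi$ is used implicitly so that $e_i,f_i,k_i^{\pm1}\in\operatorname{Dif\/f}(\ek)$ really act as the images of the abstract generators of $U_q({\mathfrak{sp}}_{2n})$ with the correct coproduct.
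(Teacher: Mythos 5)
Your proposal is correct and follows essentially the same route as the paper's own proof: the paper likewise reduces \eqref{mod1} and \eqref{mod2} to the generators on the grounds that $\eee$ and $\Delta$ are algebra homomorphisms, then obtains \eqref{mod1} from \eqref{eee} together with the action formulas \eqref{e1}--\eqref{ki}, and \eqref{mod2} from Lemma~\ref{mod alg} combined with the coproducts \eqref{de1} and \eqref{de2}. Your write-up merely makes explicit the subalgebra bookkeeping and the role of the homomorphism $\Psi$, which the paper leaves implicit.
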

\begin{proof}It is suf\/f\/icient to check \eqref{mod1} and \eqref{mod2} on the generators of $U_q({\mathfrak{sp}}_{2n})$, since $\eee$ and~$\Delta$ are algebra homomorphisms. The relations \eqref{eee} and \eqref{e1}--\eqref{ki} imply \eqref{mod1}. Lemma \ref{mod alg}, \eqref{de1} and \eqref{de2} imply~\eqref{mod2}.
\end{proof}

\textbf{3.2.} We consider the decomposition of $\ek$ into a direct sum of irreducible $U_q({\mathfrak{sp}}_{2n})$-sub\-mo\-du\-les. Recall that $\ek= \bigoplus_{m\in\mathbb{N}_0} \ek^m$, where $\ek^m$ is the subspace of homogeneous elements of degree $m$.

\begin{Proposition}The vector space $\ek^m$ is a finite-dimensional irreducible $U_q({\mathfrak{sp}}_{2n})$-module with highest weight vector $x_{-n}^m$ and highest weight $m\eee_n$.
\end{Proposition}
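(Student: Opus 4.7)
The plan is to establish four claims in sequence. First, $\ek^m$ is spanned by the normal monomials $x^a$ with $|a|=m$, and there are only $\binom{2n+m-1}{m}$ such monomials, so $\dim_{\k}\ek^m$ is finite. Second, inspecting the explicit formulas \eqref{e1}--\eqref{ki} shows that every summand of $e_i.x^a$, $f_i.x^a$, $k_i^{\pm 1}.x^a$ is homogeneous of degree $|a|$: the plain terms are monomials of the same total degree, while the $\Omega_2$-terms compensate the apparent degree-$2$ deficit because $\Omega_j$ is homogeneous of degree $2$ and \eqref{rel4} expresses $\Omega_j x^b$ as a linear combination of normal monomials of degree $|b|+2$. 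Hence each generator of $U_q^{2n}$ preserves the grading, and $\ek^m$ is a $U_q({\mathfrak{sp}}_{2n})$-submodule via the homomorphism $\Psi$ of Theorem~\ref{serre}.

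Third, to verify that $x_{-n}^m$ is a highest weight vector of weight $m\eee_n$, I substitute $a=m\eee_{-n}$ (so only $a_{-n}=m$ is nonzero) into \eqref{e1} and \eqref{ei}. In \eqref{e1} both $[a_1]_{q^2}$ and $\left[{a_1}\atop 2\right]_q$ vanish; in \eqref{ei} both $[a_{1-i}]_q$ (since $1-i>-n$ when $1<i\le n$, so $a_{1-i}=0$) and $[a_i]_q$ vanish, giving $e_i.x_{-n}^m=0$ for every $i\in I^+$. Formulas \eqref{k1} and \eqref{ki} yield $k_i.x_{-n}^m=x_{-n}^m$ for $i<n$ and $k_n.x_{-n}^m=q^m x_{-n}^m$; since $(m\eee_n,\alpha_i)=0$ for $i<n$ and $(m\eee_n,\alpha_n)=m$, these are exactly the eigenvalues predicted for a weight vector of weight $m\eee_n$ under the convention of \eqref{serre2}.

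Finally, for irreducibility I consider the cyclic submodule $M:=U_q({\mathfrak{sp}}_{2n}).x_{-n}^m\subseteq\ek^m$. Being generated by a highest weight vector of dominant integral weight $m\eee_n$, and $q$ being not a root of unity, by the standard classification of finite-dimensional type-$1$ modules for quantum groups at generic $q$ the module $M$ is irreducible and isomorphic to the simple module $V_q(m\eee_n)$. The quantum Weyl dimension formula then gives $\dim M=\dim V_q(m\eee_n)=\dim V(m\eee_n)$; classically $V(m\eee_n)$ is realized as the $m$-th symmetric power of the $2n$-dimensional standard representation of $\mathfrak{sp}_{2n}$, which is well-known to be irreducible (the antisymmetric symplectic form gives no trace map to contract), of dimension $\binom{2n+m-1}{m}=\dim\ek^m$. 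Therefore $M=\ek^m$, so $\ek^m$ is simple with the stated highest weight and highest weight vector. The principal difficulty is precisely this last step, because it bundles together the Lusztig semisimplicity theorem at generic $q$ and the classical identification of $V(m\eee_n)$ with $S^m$ of the standard representation; a self-contained alternative is to argue combinatorially that every normal monomial of degree $m$ lies in $M$ by applying suitable sequences of lowering operators $f_i$ to $x_{-n}^m$, inducting on a lexicographic ordering of multi-indices and using the leading terms of \eqref{f1} and \eqref{fi} to convert each $x_{-j}$ into $x_{1-j}$ and eventually into $x_j$, with lower-order corrections absorbed by the inductive hypothesis.
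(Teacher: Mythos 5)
Your proposal is correct and takes essentially the same route as the paper: the paper's own proof is a two-line appeal to exactly the two facts your argument rests on, namely that the symmetric powers of the vector representation of ${\mathfrak{sp}}_{2n}$ are classically irreducible and that at generic $q$ the finite-dimensional representation theory of $U_q(\mathfrak{g})$ matches the classical one with the same character (hence dimension) formulas, citing Lusztig. Your explicit verifications --- preservation of the grading, the highest-weight computation for $x_{-n}^m$ with weight $m\eee_n$, and the dimension count $\binom{2n+m-1}{m}$ forcing $U_q({\mathfrak{sp}}_{2n}).x_{-n}^m=\ek^m$ --- simply flesh out what the paper asserts ``follows at once.''
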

\begin{proof} The assertion follows at once from the facts that the symmetric powers of the vector representation of ${\mathfrak{sp}}_{2n}$ are irreducible and the theory of f\/inite-dimensional representations of~$U_q({\mathfrak{g}})$ is very similar to that of~${\mathfrak{g}}$ when~$q$ is not a root of unity (see~\cite{L1}), especially, they have the same character formulas for the irreducible modules.
\end{proof}

\section[Positive root vectors of $U_q({\mathfrak{sp}}_{2n})$]{Positive root vectors of $\boldsymbol{U_q({\mathfrak{sp}}_{2n})}$}\label{section4}
We are going to list all positive root vectors of $U_q({\mathfrak{sp}}_{2n})$ in $U_q^{2n}$. We f\/irst recall some notions.

Let $m_{ij}$ be equal to $2$, $3$, $4$ when $a_{ij}a_{ji}$ is equal to $0$, $1$, $2$, respectively, where~$a_{ij}$ are the entries of the Cartan matrix of ${\mathfrak{sp}}_{2n}$.
The braid group $\mathcal{B}$ associated with ${\mathfrak{sp}}_{2n}$ is the group generated by elements $s_1,\ldots,s_n$ subject to the relations \begin{gather*}s_is_js_is_j\cdots=s_js_is_js_i\cdots,\qquad i\neq j,\end{gather*} where there are $m_{ij}$ $s$'s on each side. Lusztig introduced actions of braid groups on $U_q({\mathfrak{g}})$ in~\cite{L1,gl}. The following two propositions can be found in many books, for example~\cite{jj,ks, gl}, etc.

\begin{Proposition}\label{T} To every $i$, $i\in I^+$, there corresponds an algebra automorphism $T_i$ of $U_q({\mathfrak{sp}}_{2n})$ which acts on the generators $K_j,E_j,F_j$ as
\begin{gather*}
T_i(K_j)=K_jK_i^{-a_{ij}},\qquad T_{i}(E_i)=-F_iK_i^{-1}, \qquad T_i(F_i)=-K_iE_i,\\
T_i(E_j)=\sum_{r=0}^{-a_{ij}}(-1)^rq_i^rE_i^{(-a_{ij}-r)}E_jE_i^{(r)},\qquad \text{for} \quad i\neq j,\\
T_i(F_j)=\sum_{r=0}^{-a_{ij}}(-1)^rq_i^{-r}F_i^{(r)}F_jF_i^{(-a_{ij}-r)},\qquad \text{for} \quad i\neq j,
\end{gather*}
where $E_i^{(r)}=E_i^r/[r]_{q_i}!$ and $F_i^{(r)}=F_i^r/[r]_{q_i}!$. The mapping $s_i\mapsto T_i$ determines a homomorphism of the braid group $\mathcal{B}$ into the group of algebra automorphisms of $U_q({\mathfrak{sp}}_{2n})$.
\end{Proposition}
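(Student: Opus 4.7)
The plan is to verify two things: first, that each prescribed $T_i$ extends to a well-defined algebra automorphism of $\u$, and second, that the resulting automorphisms satisfy the braid relations of $\mathcal{B}$. This is the classical Lusztig theorem, so the proof is systematic bookkeeping; I sketch the structure and indicate where the real work lies.

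For the first part I would check that the images under $T_i$ of the generators preserve each defining relation \eqref{serre1}--\eqref{serre5}. The Cartan-type relations \eqref{serre1}--\eqref{serre2} reduce to weight considerations: $T_i(E_j)$ must have weight $s_i(\aaa_j)=\aaa_j-a_{ij}\aaa_i$, which matches the action of $T_i(K_j)=K_jK_i^{-a_{ij}}$ by conjugation, and the formulas are manifestly homogeneous of this weight. For the commutator relation \eqref{serre3} I would split into cases: when $j=k=i$, direct substitution of $T_i(E_i)=-F_iK_i^{-1}$ and $T_i(F_i)=-K_iE_i$ together with $[E_i,F_i]=(K_i-K_i^{-1})/(q_i-q_i^{-1})$ produces $-(K_i-K_i^{-1})/(q_i-q_i^{-1})$, which is exactly $(T_i(K_i)-T_i(K_i^{-1}))/(q_i-q_i^{-1})$ since $T_i(K_i)=K_i^{-1}$; the mixed cases $j=i\neq k$ reduce by Lemma~\ref{comm1} to a telescoping sum that vanishes thanks to the Serre identity already holding in $\u$; and the generic case $i\notin\{j,k\}$ is handled by the $q$-adjoint interpretation $T_i(E_j)=\pm(\operatorname{ad}_q E_i)^{(-a_{ij})}(E_j)$, which commutes with $F_k$ whenever $k\neq i$. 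For the Serre relations \eqref{serre4}--\eqref{serre5} on the images I would use the same $q$-adjoint interpretation together with the quantum Leibniz rule and standard divided-power identities. Invertibility is then obtained by exhibiting an explicit $T_i^{-1}$ via the analogous formulas with $E$ and $F$ exchanged and checking $T_iT_i^{-1}=\operatorname{id}$ on generators.

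The second and main obstacle is the braid relations $\underbrace{T_iT_jT_i\cdots}_{m_{ij}\text{ factors}}=\underbrace{T_jT_iT_j\cdots}_{m_{ij}\text{ factors}}$. For $|i-j|>1$ we have $m_{ij}=2$ and commutativity is immediate, since the formulas for $T_i$ and $T_j$ involve disjoint subsets of generators up to commuting Cartan pieces. For $2\leq i,j\leq n$ with $|i-j|=1$ we have $m_{ij}=3$, and a direct length-three computation on $E_i,E_j,F_i,F_j$ suffices, relying on Lemma~\ref{comm1}. The hardest case is $\{i,j\}=\{1,2\}$: because $\aaa_1$ is the long simple root with $-a_{12}=1$ and $-a_{21}=2$, divided powers $E_2^{(2)}$ and $F_2^{(2)}$ enter, so the iterated expansions $T_1T_2T_1T_2(E_\ell)$ and $T_2T_1T_2T_1(E_\ell)$ for $\ell\in\{1,2\}$ each produce several summands whose coincidence requires the $q$-binomial collapses encoded in Lemma~\ref{q-num}. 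Once the match is verified on $K_1,K_2,E_1,E_2,F_1,F_2$ (and trivially on the remaining generators, which are left essentially untouched), the assignment $s_i\mapsto T_i$ respects every relator of $\mathcal{B}$ and therefore factors through a homomorphism $\mathcal{B}\to\operatorname{Aut}(\u)$.
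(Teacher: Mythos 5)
First, a point of reference: the paper does not prove Proposition \ref{T} at all. It is quoted as standard, with the remark that it ``can be found in many books'', citing \cite{jj,ks,gl}, and the operators are identified with Lusztig's $T''_{i,-1}$ of \cite[Section~37.1.3]{gl}. So your proposal is measured against the classical textbook proof, and in outline it reproduces that proof correctly: well-definedness by checking \eqref{serre1}--\eqref{serre5} on the images (weight bookkeeping for \eqref{serre1}--\eqref{serre2}; the computation $[T_i(E_i),T_i(F_i)]=-[E_i,F_i]=(T_i(K_i)-T_i(K_i)^{-1})/(q_i-q_i^{-1})$ with $T_i(K_i)=K_i^{-1}$ is right; the $q$-adjoint/divided-power identities carry the load for the remaining cases of \eqref{serre3} and for \eqref{serre4}--\eqref{serre5}), invertibility via the companion symmetries with $E$ and $F$ exchanged (Lusztig's $T'_{i,e}$, inverse to $T''_{i,-e}$), and then the braid relations case by case in $m_{ij}\in\{2,3,4\}$, with the pair $\{1,2\}$ ($-a_{12}=1$, $-a_{21}=2$, so $E_2^{(2)}$, $F_2^{(2)}$ appear) the heaviest computation, exactly as in \cite[Chapter~8]{jj}.

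There is, however, one concrete step in your plan that fails as written: the claim that the braid relation for a pair $\{i,j\}$ need only be verified on $K_i,K_j,E_i,E_j,F_i,F_j$, the remaining generators being ``left essentially untouched''. For $n\geq 3$ this is false, because $T_i$ moves every Dynkin-neighbor of $i$: for instance $T_2(E_3)=[E_2,E_3]_q$ (this is precisely Proposition \ref{T2}(2) of the paper), so in $T_1T_2T_1T_2=T_2T_1T_2T_1$ the two sides applied to $E_3$ and $F_3$ are different-looking iterated $q$-commutators whose equality needs its own verification; likewise in the $m_{ij}=3$ cases the neighbors $E_{i-1}$, $E_{j+1}$ must be checked, and even in your ``immediate'' $m_{ij}=2$ case a generator $E_k$ adjacent to both $i$ and $j$ requires the identity $[E_i,[E_j,E_k]_q]_q=[E_j,[E_i,E_k]_q]_q$, which holds only because $[E_i,E_j]=0$ (cf.\ \eqref{brac3}). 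Only on the $K_k$ for all $k$ is the check genuinely trivial, since there $T_i$ realizes the simple reflection $w_i$ on the weight lattice and the braid relations hold in $W$. The standard repairs are either to enlarge the generator checklist accordingly (the extra cases are shorter than the $E_1,E_2$ computation but not empty), or to follow Lusztig's route of first proving the braid relations for the induced operators on integrable modules and then transporting them to $\operatorname{Aut}(U_q({\mathfrak{sp}}_{2n}))$ by faithfulness; as stated, your generator-by-generator plan has a hole exactly at these neighbor generators.
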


The operators $T_i$ def\/ined by Proposition~\ref{T} are Lusztig's $T_{i,-1}''$ \cite[Section~37.1.3]{gl}.
\begin{Proposition}\label{T2} The operators $T_i$ satisfy the following relations.
\begin{enumerate}\itemsep=0pt
 \item[$1.$] For $i,j\in I^+$ with $|i-j|>1$, we have
\begin{gather*}
T_i(E_j)=E_j, \qquad T_iT_j=T_jT_i.
\end{gather*}

 \item[$2.$] For $2\leq i,j\leq n$ with $|i-j|=1$, we have \begin{gather*}
T_i(E_j)=[E_i,E_j]_q, \qquad T_iT_j(E_i)=E_j,\qquad T_iT_jT_i=T_jT_iT_j.
\end{gather*}

 \item[$3.$] For $1\leq i\neq j\leq 2$, we have
\begin{gather*}
T_1(E_2)=[E_1,E_2]_{q^2}, \qquad [2]_qT_2(E_1)=[E_2,[E_2,E_1]_{q^2}],\\
 T_1T_2T_1(E_2)=E_2, \qquad T_2T_1T_2(E_1)=E_1,\qquad T_1T_2T_1T_2=T_2T_1T_2T_1.
\end{gather*}
\end{enumerate}
\end{Proposition}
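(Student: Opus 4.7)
The plan is to verify each of the three assertions by direct substitution into the defining formulas of Proposition~\ref{T}, using the Serre relations from Theorem~\ref{serre} whenever needed. The computation exploits the specific Cartan entries for $\mathfrak{sp}_{2n}$: when $|i-j|>1$ one has $a_{ij}=0$; for $2\le i,j\le n$ with $|i-j|=1$, $a_{ij}=a_{ji}=-1$ and $q_i=q_j=q$; and for the pair $(1,2)$, $a_{12}=-1$, $a_{21}=-2$, $q_1=q^2$, $q_2=q$. In Case~1 the sum in Proposition~\ref{T} collapses to its $r=0$ term, giving $T_i(E_j)=E_j$, $T_i(F_j)=F_j$, and $T_i(K_j)=K_j$; hence $T_iT_j$ and $T_jT_i$ agree on every generator and coincide as automorphisms.

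For Case~2 the definition gives $T_i(E_j)=E_iE_j-qE_jE_i=[E_i,E_j]_q$. To evaluate $T_iT_j(E_i)$, I would first apply this formula to obtain $T_j(E_i)=[E_j,E_i]_q$ and then apply $T_i$ using $T_i(E_i)=-F_iK_i^{-1}$ together with $T_i(E_j)=[E_i,E_j]_q$; after commuting $F_iK_i^{-1}$ past $[E_i,E_j]_q$ via $[E_i,F_i]=(K_i-K_i^{-1})/(q-q^{-1})$, $[F_i,E_j]=0$, and $K_iE_j=q^{-1}E_jK_i$, the expression simplifies to $E_j$. The braid identity $T_iT_jT_i=T_jT_iT_j$ then reduces to a finite check on the generators $E_i,E_j,F_i,F_j,K_i^{\pm 1},K_j^{\pm 1}$: on the $K$'s it is an identity in the root lattice, and on the $E$'s and $F$'s one rewrites both sides by means of the formulas just derived and the rank-two Serre relation~\eqref{serre4}.

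For Case~3 the identity $T_1(E_2)=[E_1,E_2]_{q^2}$ is the $-a_{12}=1$, $q_1=q^2$ specialization of the definition. For $T_2(E_1)$, with $-a_{21}=2$ and $q_2=q$, expanding the divided-power sum yields
\begin{gather*}
[2]_qT_2(E_1)=E_2^2E_1-\big(q^2+1\big)E_2E_1E_2+q^2E_1E_2^2,
\end{gather*}
which equals $[E_2,[E_2,E_1]_{q^2}]$ by a direct expansion. The assertions $T_1T_2T_1(E_2)=E_2$ and $T_2T_1T_2(E_1)=E_1$ follow from iterated application of these formulas, $T_i(E_i)=-F_iK_i^{-1}$, and the rank-two Serre relations of type $C_2$. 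Finally, the four-strand relation $T_1T_2T_1T_2=T_2T_1T_2T_1$ is verified by checking agreement on the generators $E_1,E_2,F_1,F_2,K_1^{\pm 1},K_2^{\pm 1}$; agreement on the $K_i$'s is a root-lattice calculation, and agreement on the $E_i$'s (and symmetrically on the $F_i$'s) reduces, via the previously established formulas, to the type-$C_2$ Serre relations.

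The main obstacle is the four-strand relation in Case~3: the required cancellations involve many monomials and rely essentially on the type-$C_2$ Serre relations. In practice I would shorten this by restricting to the rank-two subalgebra generated by $E_1,E_2,F_1,F_2,K_1^{\pm 1},K_2^{\pm 1}$ and checking the identity word by word on these generators; alternatively, one can quote the standard rank-two computation from~\cite{L1,gl}, so that only the explicit formulas for $T_1(E_2)$ and $T_2(E_1)$ recorded in the statement require verification by hand.
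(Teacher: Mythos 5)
Your proposal is correct in substance but takes a genuinely different route from the paper: the paper gives no proof of this proposition at all, remarking only that it ``can be found in many books'' and citing \cite{jj,ks,gl}, whereas you outline the direct verification from the definition in Proposition~\ref{T}. Your explicit computations check out against $q_1=q^2$, $q_2=q$, $a_{12}=-1$, $a_{21}=-2$: the collapse of the divided-power sum when $a_{ij}=0$; the formula $T_i(E_j)=E_iE_j-qE_jE_i=[E_i,E_j]_q$; the simplification of $[[E_i,E_j]_q,-F_iK_i^{-1}]_q$ to $E_j$ via $[F_i,E_j]=0$ and $K_iE_jK_i^{-1}=q^{-1}E_j$ (this is exactly the mechanism the paper itself uses later to prove \eqref{T1}); and the identity $[2]_qT_2(E_1)=E_2^2E_1-(q^2+1)E_2E_1E_2+q^2E_1E_2^2=[E_2,[E_2,E_1]_{q^2}]$, since $q[2]_q=q^2+1$. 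What your verification buys is self-containedness for precisely the formulas the paper goes on to use (items~2 and~3 feed directly into \eqref{T5}, \eqref{T3} and Proposition~\ref{root vect ind}); what the paper's citation buys is avoiding the rank-two braid checks, which are the laborious part and are carried out in full in \cite{jj} and \cite{gl}. Your closing fallback of quoting \cite{L1,gl} for those is exactly the paper's own move. One small slip: the Serre relations you need are the defining relations \eqref{serre4} and \eqref{serre5} of $U_q(\mathfrak{sp}_{2n})$ from Section~2.3, not Theorem~\ref{serre}, which concerns the realization $U_q^{2n}$ and is logically downstream.

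There is, however, one concrete incompleteness in the way you reduce the operator identities. In Case~1 you conclude that $T_iT_j$ and $T_jT_i$ ``agree on every generator'' from their action on the generators indexed by $i$ and $j$ alone; but $T_iT_j=T_jT_i$ is an identity of automorphisms of the whole algebra, and a third index $k\notin\{i,j\}$ can be adjacent to $i$ or $j$ (e.g., $i=1$, $j=3$, $k=2$), in which case both $T_iT_j(E_k)$ and $T_jT_i(E_k)$ are nontrivial. The required equality, e.g., $\big[E_3,[E_1,E_2]_{q^2}\big]_q=\big[E_1,[E_3,E_2]_q\big]_{q^2}$, does hold, but only after expanding and using $[E_1,E_3]=0$. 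The same issue recurs in Cases~2 and~3: restricting the check of $T_iT_jT_i=T_jT_iT_j$ and $T_1T_2T_1T_2=T_2T_1T_2T_1$ to the rank-two subalgebra generated by $E_i,E_j,F_i,F_j,K_i^{\pm1},K_j^{\pm1}$ does not establish them as automorphism identities; one must also verify agreement on generators $E_k$, $F_k$ with $k$ adjacent to exactly one of $i,j$ (generators far from both are fixed by every map involved, so those are trivial). These mixed-index checks are routine and succeed by the same bracket manipulations, so the plan is repairable as stated, but as written the set of generators you test is too small to license the conclusion.
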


The Weyl group $W$ of ${\mathfrak{sp}}_{2n}$ generated by ref\/lections $w_1,\ldots,w_n$ (corresponding to the simple roots of ${\mathfrak{sp}}_{2n}$) has the longest element $w_0$ whose reduced expression is \begin{gather*}w_0=\gamma_1\cdots \gamma_n,\end{gather*} where $\gamma_i=w_iw_{i-1}\cdots w_1\cdots w_{i-1}w_i$ (cf.~\cite{cx}). Write $w_0=w_{i_1}w_{i_2}\cdots w_{i_N}$ for this reduced expression. Then \begin{gather*}\beta_1=\aaa_{i_1}, \qquad \beta_2=w_{i_1}(\aaa_{i_2}), \qquad \ldots, \qquad \beta_{N}=w_{i_1}w_{i_2}\cdots w_{i_{N-1}}(\aaa_{i_N})\end{gather*} exhaust all positive roots of ${\mathfrak{sp}}_{2n}$.

\begin{Definition}\label{e}The elements \begin{gather*}E_{\beta_r}=T_{i_1}T_{i_2}\cdots T_{i_{r-1}}(E_{i_r}),\qquad 1\leq r\leq N,\end{gather*}
are called \emph{positive root vectors} of $U_q({\mathfrak{sp}}_{2n})$ corresponding to the roots~$\beta_r$'s.
\end{Definition}

Set \begin{gather*}\aaa_{i,i}=2\epsilon_i \qquad \text{and} \qquad \aaa_{\pm l, k}=\pm\epsilon_l+\epsilon_k\end{gather*} for $1\leq i\leq n$ and $1\leq l<k\leq n$. We can list all positive roots in the ordering according to the above reduced expression for the longest element $w_0$ as follows
\begin{gather*}
\aaa_{1,1},\\
\aaa_{1,2}, \ \aaa_{2,2},\ \aaa_{-1,2},\\
 \aaa_{2,3}, \ \aaa_{1,3}, \ \aaa_{3,3}, \ \aaa_{-1,3}, \ \aaa_{-2,3},\\
\ldots,\\
\aaa_{n-1,n},\ \aaa_{n-2,n},\ \dots, \ \aaa_{1,n}, \ \aaa_{n,n}, \ \aaa_{-1,n}, \ \aaa_{-2,n}, \ \ldots, \ \aaa_{1-n,n}.
\end{gather*}
Write \begin{gather*}E_{i,i}=E_{\aaa_{i,i}}\qquad\text{and}\qquad E_{\pm l,k}=E_{\aaa_{\pm l, k}}.\end{gather*}
It is clear that $E_{1,1}=E_1$ and we will check in Corollary~\ref{Ei} that $E_{\aaa_i}=E_{i}$ for all $1< i\leq n$.
Set \begin{gather*}T_{\gamma_i}=T_{i}T_{{i-1}}\cdots T_{1}\cdots T_{{i-1}}T_{i}, \qquad 1\leq i\leq n.\end{gather*}
By Def\/inition \ref{e}, all the positive root vectors of $U_q({\mathfrak{sp}}_{2n})$ associated to the above ordering of~$\Delta^+$ are as follows, for any $1<j\leq n$,
 \begin{gather}
E_{j-1,j}=T_{\gamma_1}\cdots T_{\gamma_{j-1}}(E_{j}),\label{Ej-1,j}\\
E_{i,j}=T_{\gamma_1}\cdots T_{\gamma_{j-1}}T_{j}T_{j-1}\cdots T_{i+2}(E_{i+1}),\qquad\text{for} \quad 1\leq i<j-1,\label{Eij}\\
E_{j,j}=T_{\gamma_1}\cdots T_{\gamma_{j-1}}T_{j}T_{j-1}\cdots T_{2}(E_{1}),\label{Ejj}\\
E_{-i,j}=T_{\gamma_1}\cdots T_{\gamma_{j-1}}T_{j}\cdots T_{1}\cdots T_{i}(E_{i+1}),\qquad \text{for} \quad 1\leq i<j.\label{E-ij}
\end{gather}

\begin{Lemma}For $1<i\leq j\leq n$, we have
\begin{gather}
\label{T1} T_1([E_2,E_1]_{q^2})=E_2,\\
 \label{T5} T_{\gamma_{i}}T_{i+1}(E_{i})=[E_{i},E_{i+1}]_q,\\
 \label{T3} T_{\gamma_{j-1}}T_{j} T_{\gamma_{j-1}}(E_{j})=E_{j},\\
\label{T4} T_{\gamma_j}T_{j+1}T_j\cdots T_{i}=(T_jT_{j+1})(T_{j-1}T_{j})\cdots (T_{i}T_{i+1})T_{\gamma_{i-1}}(T_iT_{i+1}\cdots T_{j+1}).
\end{gather}
\end{Lemma}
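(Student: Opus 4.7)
The lemma bundles four assertions of rather different flavours: (T5) is an immediate consequence of Proposition~\ref{T2}, (T1) is a direct $U_q$-calculation, (T4) is a pure braid-group identity, and (T3) is the most delicate, mixing braid moves with an induction. I would tackle them in the order (T5), (T1), (T4), (T3).

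For (T5), Proposition~\ref{T2}(2) yields $T_iT_{i+1}(E_i)=E_{i+1}$, and by Proposition~\ref{T2}(1) every $T_k$ with $k\leq i-1$ fixes $E_{i+1}$ (since $|k-(i+1)|>1$). Telescoping,
\begin{gather*}
T_{\gamma_i}T_{i+1}(E_i)=\bigl(T_iT_{i-1}\cdots T_1\cdots T_{i-1}\bigr)\bigl(T_iT_{i+1}(E_i)\bigr)=T_i(E_{i+1})=[E_i,E_{i+1}]_q.
\end{gather*}
For (T1) I would expand
\begin{gather*}
T_1([E_2,E_1]_{q^2})=T_1(E_2)T_1(E_1)-q^2T_1(E_1)T_1(E_2)
\end{gather*}
using $T_1(E_1)=-F_1K_1^{-1}$ and $T_1(E_2)=[E_1,E_2]_{q^2}$ from Proposition~\ref{T}, then commute every $K_1^{-1}$ through the $E$'s via $K_1E_jK_1^{-1}=q_1^{a_{1j}}E_j$. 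Invoking $[F_1,E_2]=0$ and $[E_1,F_1]=(K_1-K_1^{-1})/(q^2-q^{-2})$, all $F_1$ and $K_1^{\pm 1}$ factors should cancel, leaving $E_2$. This is the only identity in the lemma that forces one to step outside the subalgebra generated by the $E_i$'s.

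For (T4), which is a pure braid relation, I would induct on the tail length $j-i+1$. The ingredients are the commutation $T_kT_l=T_lT_k$ for $|k-l|>1$, the ordinary braid relation $T_kT_{k+1}T_k=T_{k+1}T_kT_{k+1}$ for $k\geq 2$, and the long braid relation $T_1T_2T_1T_2=T_2T_1T_2T_1$. The base case $i=j$ amounts to $T_{\gamma_j}T_{j+1}T_j=(T_jT_{j+1})T_{\gamma_{j-1}}(T_jT_{j+1})$, which follows from the factorisation $T_{\gamma_j}=T_jT_{\gamma_{j-1}}T_j$ after pushing $T_{j+1}$ past the inner $T_1,\ldots,T_{j-2}$ by commutation and one application of the braid relation at index $j$. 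The inductive step peels off the outer pair $(T_jT_{j+1})$ and appeals to the shorter case.

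The principal obstacle is (T3); my plan is induction on $j$. The base $j=2$ is exactly $T_1T_2T_1(E_2)=E_2$, i.e.\ Proposition~\ref{T2}(3). For $j\geq 3$, the factorisation $T_{\gamma_{j-1}}=T_{j-1}T_{\gamma_{j-2}}T_{j-1}$ (immediate from the definition of $\gamma_{j-1}$) gives
\begin{gather*}
T_{\gamma_{j-1}}T_jT_{\gamma_{j-1}}=T_{j-1}T_{\gamma_{j-2}}T_{j-1}T_jT_{j-1}T_{\gamma_{j-2}}T_{j-1}.
\end{gather*}
Since $T_{\gamma_{j-2}}$ involves only $T_k$ with $k\leq j-2$, it commutes with $T_j$; together with the braid move $T_{j-1}T_jT_{j-1}=T_jT_{j-1}T_j$ (for $j\geq 3$; the step $j=3$ may additionally invoke the long braid relation through $T_1,T_2$ inside $T_{\gamma_{j-2}}$) one rearranges the expression to expose the block $T_{\gamma_{j-2}}T_{j-1}T_{\gamma_{j-2}}$. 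Applied to $E_j$ and exploiting $T_{\gamma_{j-2}}(E_j)=E_j$ together with Proposition~\ref{T2}(2), the argument of that inner block reduces to $E_{j-1}$, so the induction hypothesis yields $E_{j-1}$ back; undoing the outer $T_{j-1}$ and $T_j$ layers (again with Proposition~\ref{T2}) recovers $E_j$. The hard part is the careful bookkeeping: every intermediate result must remain a root vector with no stray $K^{\pm 1}$ contribution, which is precisely what makes (T3) considerably fiddlier than (T5).
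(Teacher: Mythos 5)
Your proposal follows the paper's proof in all essentials: \eqref{T1} is the same bracket computation via $T_1(E_1)=-F_1K_1^{-1}$, $[E_2,F_1]=0$ and $[E_1,F_1]=(K_1-K_1^{-1})/(q^2-q^{-2})$; \eqref{T5} is the same telescoping through $T_{\gamma_i}=T_iT_{\gamma_{i-1}}T_i$; and \eqref{T3} is the same induction with base Proposition~\ref{T2}(3), the braid move at the top index, and commutation of the inner $T_{\gamma}$-block past it (your hedge that $j=3$ ``may'' need the long braid relation in the step is unnecessary --- it enters only through the base case). The only deviation is bookkeeping in \eqref{T4}: you strip the pair $(T_jT_{j+1})$ on the left, reducing $(i,j)$ to $(i,j-1)$, while the paper appends $T_i$ on the right, reducing to $(i+1,j)$ and reusing the base identity at index~$i$; both inductions are valid and rest on exactly the same commutations and braid relations.
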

\begin{proof} It is easy to see from \eqref{serre2} and \eqref{serre3} that
\begin{gather*}\big[[E_1,E_2]_{{q^2}},K_1^{-1}\big]_{q^2}=0\qquad\text{and}\qquad [E_2,F_1]=0,\end{gather*}
then by Proposition \ref{T} and relations \eqref{comm1.2} and \eqref{comm2.2}, we get
\begin{gather*}
T_1\big([E_2,E_1]_{q^2}\big)= [T_1(E_2),T_1(E_1)]_{q^2}=\big[[E_1,E_2]_{{q^2}},-F_1K_1^{-1}\big]_{q^2}\\
\hphantom{T_1\big([E_2,E_1]_{q^2}\big)}{}
= -\big[[E_1,E_2]_{{q^2}},F_1\big]K_1^{-1}= -[[E_1,F_1],E_2]_{{q^2}}K_1^{-1}\\
\hphantom{T_1\big([E_2,E_1]_{q^2}\big)}{}
=-\left[\frac{K_1-K_1^{-1}}{q^2-q^{-2}},E_2\right]_{{q^2}}K_1^{-1}=E_2.
\end{gather*}

The relation \eqref{T5} is clear, since for $i>1$ we have
\begin{gather*}
T_{\gamma_{i}}T_{i+1}(E_{i})=T_{i}T_{\gamma_{i-1}}T_{i}T_{i+1}(E_{i}) =T_{i}T_{\gamma_{i-1}}(E_{i+1})=T_{i}(E_{i+1})=[E_{i},E_{i+1}]_q.
\end{gather*}

We use induction on $j$ to prove \eqref{T3}. For $j=2$, this is obvious by Proposition~\ref{T2}(3). Now suppose that \eqref{T3} holds for some $j$ with $2<j<n$. Then Proposition~\ref{T2}(2) and induction yield
\begin{gather*}
T_{\gamma_{j}}T_{j+1} T_{\gamma_{j}}(E_{j+1})=T_jT_{\gamma_{j-1}}(T_jT_{j+1}T_j)T_{\gamma_{j-1}}T_j(E_{j+1})\\
\hphantom{T_{\gamma_{j}}T_{j+1} T_{\gamma_{j}}(E_{j+1})}{}
= T_jT_{\gamma_{j-1}}(T_{j+1}T_jT_{j+1})T_{\gamma_{j-1}}T_j(E_{j+1})\\
\hphantom{T_{\gamma_{j}}T_{j+1} T_{\gamma_{j}}(E_{j+1})}{}
= T_jT_{j+1}T_{\gamma_{j-1}}T_jT_{\gamma_{j-1}}(T_{j+1}T_j(E_{j+1}))\\
\hphantom{T_{\gamma_{j}}T_{j+1} T_{\gamma_{j}}(E_{j+1})}{} = T_jT_{j+1}(T_{\gamma_{j-1}}T_jT_{\gamma_{j-1}}(E_j))
= T_jT_{j+1}(E_j)=E_{j+1}.
\end{gather*}

To prove \eqref{T4}, we use induction on $j-i$. For $j-i=0$, we have \begin{gather*}
T_{\gamma_j}T_{j+1}T_j=T_jT_{\gamma_{j-1}}T_jT_{j+1}T_j =T_jT_{\gamma_{j-1}}T_{j+1}T_jT_{j+1} =T_jT_{j+1}T_{\gamma_{j-1}}T_jT_{j+1}.
\end{gather*}
Suppose that \eqref{T4} holds for some $j-i-1>0$. Then by induction, we get \begin{gather*}
T_{\gamma_j}T_{j+1}T_j\cdots T_{i+1}T_{i}=(T_jT_{j+1})(T_{j-1}T_{j})\cdots (T_{i+1}T_{i+2})T_{\gamma_{i}}(T_{i+1}T_{i+2}\cdots T_{j+1})T_i\\
\hphantom{T_{\gamma_j}T_{j+1}T_j\cdots T_{i+1}T_{i}}{} = (T_jT_{j+1})(T_{j-1}T_{j})\cdots (T_{i+1}T_{i+2})(T_{\gamma_{i}}T_{i+1}T_i)T_{i+2}\cdots T_{j+1}\\
\hphantom{T_{\gamma_j}T_{j+1}T_j\cdots T_{i+1}T_{i}}{}= (T_jT_{j+1})(T_{j-1}T_{j})\cdots (T_{i+1}T_{i+2})(T_iT_{i+1})T_{\gamma_{i-1}}(T_iT_{i+1}T_{i+2}\cdots T_{j+1}).
\end{gather*}
So \eqref{T4} holds.
\end{proof}

Using \eqref{T3} and Proposition~\ref{T2}(1), we get the following corollary easily.
\begin{Corollary}\label{Ei}
 For any $1<j\leq n$, we have
\begin{gather*}T_{\gamma_1}\cdots T_{\gamma_{j-1}}T_{j}\cdots T_{1}\cdots T_{j-1}(E_{j})=E_{j},\end{gather*} that is, $E_{1-j,j}=E_{\aaa_j}=E_j$.
\end{Corollary}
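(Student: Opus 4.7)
The corollary follows essentially by peeling the expression from the inside out, so the plan is short. First I would rewrite the central block $T_j\cdots T_1\cdots T_{j-1}$ using the definition $T_{\gamma_{j-1}}=T_{j-1}T_{j-2}\cdots T_1\cdots T_{j-2}T_{j-1}$ to recognize
\begin{gather*}
T_j\cdots T_1\cdots T_{j-1}=T_jT_{\gamma_{j-1}},
\end{gather*}
so that the left-hand side of the corollary reads
\begin{gather*}
T_{\gamma_1}\cdots T_{\gamma_{j-2}}\,\bigl(T_{\gamma_{j-1}}T_jT_{\gamma_{j-1}}\bigr)(E_j).
\end{gather*}

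Next I would apply identity \eqref{T3} (already established), namely $T_{\gamma_{j-1}}T_jT_{\gamma_{j-1}}(E_j)=E_j$, to collapse the bracketed block and reduce the problem to showing
\begin{gather*}
T_{\gamma_1}\cdots T_{\gamma_{j-2}}(E_j)=E_j.
\end{gather*}

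For this, I would invoke Proposition~\ref{T2}(1): whenever $|i-j|>1$ one has $T_i(E_j)=E_j$ and the $T_i$'s commute with one another. Every $T_{\gamma_k}$ with $k\le j-2$ is a product of $T_i$'s with indices $i\le k\le j-2$, so each such $T_i$ satisfies $|i-j|\ge 2$ and therefore fixes $E_j$. Applying the factors one at a time from the inside out, each $T_{\gamma_k}$ in turn acts trivially on $E_j$, yielding $T_{\gamma_1}\cdots T_{\gamma_{j-2}}(E_j)=E_j$ and completing the proof. The identification $E_{1-j,j}=E_{\alpha_j}=E_j$ is then the direct translation via \eqref{E-ij} with $i=j-1$.

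There is no real obstacle here; the only mildly delicate point is parsing the string $T_j\cdots T_1\cdots T_{j-1}$ correctly as $T_jT_{\gamma_{j-1}}$, after which the result is an immediate two-step combination of \eqref{T3} with the commutation/triviality clause of Proposition~\ref{T2}(1).
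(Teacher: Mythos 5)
Your proposal is correct and matches the paper's proof exactly: the paper obtains this corollary precisely by combining the identity \eqref{T3} with Proposition~\ref{T2}(1), which are your two steps of collapsing $T_{\gamma_{j-1}}T_jT_{\gamma_{j-1}}(E_j)=E_j$ and then observing that every factor $T_i$ occurring in $T_{\gamma_1}\cdots T_{\gamma_{j-2}}$ has index $i\leq j-2$, hence $|i-j|\geq 2$ and fixes $E_j$. Your parsing of the central string $T_j\cdots T_1\cdots T_{j-1}$ as $T_jT_{\gamma_{j-1}}$ is also the intended reading, so there is nothing to add.
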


\begin{Proposition}\label{root vect ind}
 The positive root vectors of $U_q({\mathfrak{sp}}_{2n})$ have the following commutation relations:
\begin{gather}
E_{1,2}=[E_1,E_2]_{{q^2}},\label{e12}\\
E_{-i,j}=[E_{-i,j-1},E_{j}]_q,\qquad 3 \leq i+2 \leq j\leq n,\label{e-i,j}\\
E_{i,j}=[E_{i,j-1},E_{j}]_q,\qquad 3 \leq i+2 \leq j\leq n,\label{ei,j}\\
E_{j-1,j}=[E_{j-1},E_{j-2,j}]_q,\qquad 3\leq j\leq n,\label{ej-1,j}\\
E_{j,j}=[2]_q^{-1}[E_{1,j},E_{-1,j}],\qquad 2\leq j\leq n.\label{ejj}
\end{gather}
\end{Proposition}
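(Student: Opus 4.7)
The plan is to verify each of the five relations by unwinding the definitions \eqref{Ej-1,j}--\eqref{E-ij}, exploiting that each Lusztig automorphism $T_i$ is an algebra homomorphism (so it passes through any $q$-bracket), and then applying the braid identities collected in Proposition~\ref{T2} together with the preliminary lemma \eqref{T1}--\eqref{T4} and Corollary~\ref{Ei}. A recurring device will be the observation that $T_{\gamma_1}\cdots T_{\gamma_{j-2}}$ lies in the subgroup generated by $T_1,\ldots,T_{j-2}$, each of which commutes with $E_j$. Hence this long prefix fixes $E_j$, which is exactly what allows one to pull $E_j$ inside a bracket and thereby peel off the outer layer of braid operators in each identity.

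Identity \eqref{e12} is immediate: by definition $E_{1,2}=T_{\gamma_1}(E_2)=T_1(E_2)$, and Proposition~\ref{T2}(3) gives $T_1(E_2)=[E_1,E_2]_{q^2}$.

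For \eqref{ei,j}, \eqref{e-i,j} and \eqref{ej-1,j} I would induct on $j$ (and, for the first two, secondarily on $j-i$). In each case, after applying $T_{\gamma_1}\cdots T_{\gamma_{j-2}}$ on the right-hand side and using that it fixes $E_j$, the target reduces to an identity involving only $T_{\gamma_{j-1}}$ and a shorter chain $T_jT_{j-1}\cdots$ applied to a simple root vector. At this stage \eqref{T4} with a suitable index shift allows rewriting $T_{\gamma_{j-1}}T_jT_{j-1}\cdots T_{i+2}$ as a product whose tail consists of $T_k$'s acting trivially except for a single adjacent commutator, leaving behind a core piece that \eqref{T5} or Proposition~\ref{T2}(2) evaluates explicitly. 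For \eqref{e-i,j} the chain reaches down to $T_1$, so the base case of the inner induction invokes \eqref{T1} to absorb the transition from a $q^2$-commutator to a $q$-commutator. For \eqref{ej-1,j} one uses $T_j(E_{j-1})=[E_j,E_{j-1}]_q$ on the right-hand side and compares with the left-hand side via Proposition~\ref{T2}(2) applied through the prefix $T_{\gamma_1}\cdots T_{\gamma_{j-1}}$.

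The main obstacle is \eqref{ejj}, corresponding to the long root $2\epsilon_j$ and hence involving the $q^2$-commutator implicit in the definition of $E_{j,j}$ via $E_1$. Here I would expand $[E_{1,j},E_{-1,j}]$ as a nested commutator of braid-transformed simple root vectors, then invoke the second equation of Proposition~\ref{T2}(3), $[2]_qT_2(E_1)=[E_2,[E_2,E_1]_{q^2}]$, suitably conjugated by $T_{\gamma_1}\cdots T_{\gamma_{j-1}}T_j\cdots T_3$, to produce the scalar $[2]_q$ and match $E_{j,j}$ as defined in \eqref{Ejj}. The delicate bookkeeping is the matching of $q$-shifts: the distinguished $q^2$-commutator originating from $T_1$ must survive all subsequent conjugations, while the remaining braid operators contribute only plain $q$-commutators. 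Once these are aligned, dividing by $[2]_q$ yields the claimed relation.
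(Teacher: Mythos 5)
Your plan follows essentially the same route as the paper's proof: unwind the defining braid words \eqref{Ej-1,j}--\eqref{E-ij}, exploit that the prefix $T_{\gamma_1}\cdots T_{\gamma_{j-2}}$ involves only $T_1,\ldots,T_{j-2}$ and hence fixes $E_j$, rewrite the words via \eqref{T4}, \eqref{T5} and Proposition~\ref{T2}, and for \eqref{ejj} conjugate $[2]_qT_2(E_1)=[E_2,[E_2,E_1]_{q^2}]$ by $T_{\gamma_1}\cdots T_{\gamma_{j-1}}T_j\cdots T_3$ to produce $[2]_q^{-1}[E_{1,j},E_{-1,j}]$. Two bookkeeping corrections: \eqref{T1} is needed not in the base case of \eqref{e-i,j} (that recursion bottoms out at $E_{-i,i+1}=E_{i+1}$ via Corollary~\ref{Ei}, and no $q^2$-commutator occurs there) but precisely in \eqref{ejj}, where together with $T_1T_{\gamma_i}=T_{\gamma_i}T_1$ and $T_1T_2T_1(E_2)=E_2$ it identifies the second bracket entry as $E_{-1,j}$; and the paper proves \eqref{ej-1,j} by splitting $T_{\gamma_{j-1}}=T_{j-1}T_{\gamma_{j-2}}T_{j-1}$, applying $T_{j-1}(E_j)=[E_{j-1},E_j]_q$, and then using $E_{2-j,j-1}=E_{j-1}$ (Corollary~\ref{Ei}) together with the already-established case $E_{j-2,j}=[E_{j-2,j-1},E_j]_q$ of \eqref{ei,j}, rather than by applying $T_j$ to $E_{j-1}$ as you suggest.
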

\begin{proof}Relation \eqref{e12} is clear. For $i\geq1$, by Proposition \ref{T2} and the identity \eqref{T4}, we obtain that
\begin{gather*}
T_{\gamma_{i+1}}T_{i+2}T_{i+1}T_{\gamma_{i}}(E_{i+1})\\
\qquad{}=T_{i+1}T_{i+2}T_{\gamma_{i}}T_{i+1}T_{i+2}T_{\gamma_{i}}(E_{i+1})=T_{i+1}T_{i+2}T_{\gamma_{i}}T_{i+1}T_{\gamma_{i}}([E_{i+2},E_{i+1}]_q)\\
\qquad{} =[T_{i+1}T_{\gamma_{i}}T_{i+2}T_{i+1}(E_{i+2}), T_{i+1}T_{i+2}T_{\gamma_{i}}T_{i+1}T_{\gamma_{i}}(E_{i+1})]_q\\
\qquad{}=[T_{i+1}T_{\gamma_{i}}T_{i+2}T_{i+1}(E_{i+2}), T_{i+1}T_{i+2}(E_{i+1})]_q=[T_{i+1}T_{\gamma_{i}}(E_{i+1}), E_{i+2}]_q.\end{gather*}
So Proposition \ref{T2}, \eqref{E-ij}, \eqref{T4} and the above formula show that for $1\leq i<j-1$
\begin{gather*}
E_{-i,j}=T_{\gamma_1}\cdots T_{\gamma_{j-1}}T_{j}\cdots T_{1}\cdots T_{i}(E_{i+1})\\
\qquad{}=(T_{\gamma_1}\cdots T_{\gamma_{j-2}})(T_{\gamma_{j-1}}T_{j}T_{j-1}\cdots T_{i+1} )T_{\gamma_i}(E_{i+1})\\
\qquad{}=(T_{\gamma_1}\cdots T_{\gamma_{j-2}})(T_{j-1}T_j)\cdots(T_{i+2}T_{i+3})(T_{i+1} T_{i+2})T_{\gamma_{i}}(T_{i+1}\cdots T_{j-1}T_{j}) T_{\gamma_i}(E_{i+1})\\
\qquad{}=(T_{\gamma_1}\cdots T_{\gamma_{j-2}})(T_{j-1}T_j)\cdots(T_{i+2}T_{i+3})T_{i+1} T_{\gamma_{i}}T_{i+2}T_{i+1}T_{i+2} T_{\gamma_i}(E_{i+1})\\
\qquad{}=(T_{\gamma_1}\cdots T_{\gamma_{j-2}})(T_{j-1}T_j)\cdots(T_{i+2}T_{i+3})T_{i+1} T_{\gamma_{i}}T_{i+1}T_{i+2}T_{i+1} T_{\gamma_i}(E_{i+1})\\
\qquad{}=(T_{\gamma_1}\cdots T_{\gamma_{j-2}})(T_{j-1}T_j)\cdots(T_{i+2}T_{i+3})T_{\gamma_{i+1}}T_{i+2}T_{i+1} T_{\gamma_i}(E_{i+1})\\
\qquad{}=(T_{\gamma_1}\cdots T_{\gamma_{j-2}})(T_{j-1}T_j)\cdots(T_{i+2}T_{i+3})[T_{i+1}T_{\gamma_{i}}(E_{i+1}), E_{i+2}]_q\\
\qquad{}=(T_{\gamma_1}\cdots T_{\gamma_{j-2}})[T_{j-1}\cdots T_{i+2}T_{i+1}T_{\gamma_{i}}(E_{i+1}), (T_{j-1}T_j)\cdots(T_{i+2}T_{i+3})(E_{i+2})]_q\\
\qquad{}=[T_{\gamma_1}\cdots T_{\gamma_{j-2}}T_{j-1}\cdots T_{i+2}T_{i+1}T_{\gamma_{i}}(E_{i+1}), E_{j}]_q =[E_{-i,j-1},E_j]_q.
\end{gather*}
Hence the relation \eqref{e-i,j} holds.

For $j=i+2$, the relations \eqref{Eij} and \eqref{T5} yield
\begin{gather*}
E_{i,i+2}=T_{\gamma_1}\cdots T_{\gamma_{i+1}}T_{i+2}(E_{i+1}) =T_{\gamma_1}\cdots T_{\gamma_{i}}([E_{i+1},E_{i+2}]_q)=[E_{i,i+1},E_{i+2}]_q.
\end{gather*}
For $j>i+2$, by \eqref{Eij}, \eqref{T5} and \eqref{T4}, we have
\begin{gather*}
E_{i,j}=T_{\gamma_1}\cdots T_{\gamma_{j-1}}T_{j}T_{j-1}\cdots T_{i+2}(E_{i+1})\\
\hphantom{E_{i,j}}{} =T_{\gamma_1}\cdots T_{\gamma_{j-2}}(T_{j-1}T_{j})(T_{j-2}T_{j-1})\cdots (T_{i+2}T_{i+3})T_{\gamma_{i+1}} T_{i+2}\cdots T_{j-1}T_{j}(E_{i+1})\\
\hphantom{E_{i,j}}{}=T_{\gamma_1}\cdots T_{\gamma_{j-2}}(T_{j-1}T_{j})(T_{j-2}T_{j-1})\cdots (T_{i+2}T_{i+3})T_{\gamma_{i+1}} T_{i+2}(E_{i+1})\\
\hphantom{E_{i,j}}{}=T_{\gamma_1}\cdots T_{\gamma_{j-2}}(T_{j-1}T_{j})(T_{j-2}T_{j-1})\cdots (T_{i+2}T_{i+3})([E_{i+1},E_{i+2}]_q)\\
\hphantom{E_{i,j}}{}=[T_{\gamma_1}\cdots T_{\gamma_{j-2}}T_{j-1}T_{j-2}\cdots T_{i+2}(E_{i+1}),E_{j}]_q =[E_{i,j-1},E_{j}]_q.
\end{gather*}
So the relation \eqref{ei,j} holds.

For $j\geq 3$, using the relations \eqref{Ej-1,j} \eqref{E-ij} and \eqref{ei,j}, we have
\begin{gather*}
E_{j-1,j}=T_{\gamma_1}\cdots T_{\gamma_{j-1}}(E_{j})=T_{\gamma_1}\cdots T_{\gamma_{j-2}}T_{j-1}T_{\gamma_{j-2}}T_{j-1}(E_{j})\\
\hphantom{E_{j-1,j}}{} =T_{\gamma_1}\cdots T_{\gamma_{j-2}}T_{j-1}T_{\gamma_{j-2}}([E_{j-1},E_{j}]_q)\\
\hphantom{E_{j-1,j}}{}=[T_{\gamma_1}\cdots T_{\gamma_{j-2}}T_{j-1}T_{\gamma_{j-2}}(E_{j-1}), T_{\gamma_1}\cdots T_{\gamma_{j-2}}T_{j-1}(E_{j})]_q\\
\hphantom{E_{j-1,j}}{}=[E_{2-j,j-1},T_{\gamma_1}\cdots T_{\gamma_{j-2}}([E_{j-1},E_{j}]_q)]_q
=[E_{j-1},[T_{\gamma_1}\cdots T_{\gamma_{j-2}}(E_{j-1}),E_{j}]_q]_q\\
\hphantom{E_{j-1,j}}{}=[E_{j-1},[E_{j-2,j-1},E_{j}]_q]_q =[E_{j-1},E_{j-2,j}]_q.
\end{gather*}
That is, the relation \eqref{ej-1,j} holds.

It is easy to check that $T_1T_{\gamma_i}=T_{\gamma_i}T_1$ for any $i\in I^+$, so for $j\geq 2$, using \eqref{Eij}--\eqref{T1} and Proposition~\ref{T2}(3), we obtain
\begin{gather*}
E_{j,j}=T_{\gamma_1}\cdots T_{\gamma_{j-1}}T_{j}T_{j-1}\cdots T_{2}(E_{1})\\
\hphantom{E_{j,j}}{} =[2]^{-1}_qT_{\gamma_1}\cdots T_{\gamma_{j-1}}T_{j}T_{j-1}\cdots T_3\big(\big[E_2,[E_2,E_1]_{q^2}\big]\big)\\
\hphantom{E_{j,j}}{}=[2]^{-1}_q\big[T_{\gamma_1}\cdots T_{\gamma_{j-1}}T_{j}T_{j-1}\cdots T_3(E_2),T_{\gamma_1}\cdots T_{\gamma_{j-1}}T_{j}T_{j-1}\cdots T_3\big([E_2,E_1]_{q^2}\big)\big]\\
\hphantom{E_{j,j}}{}=[2]^{-1}_q\big[E_{1,j}, T_{\gamma_1}\cdots T_{\gamma_{j-1}}T_{j}T_{j-1}\cdots T_3\big([E_2,E_1]_{q^2}\big)\big]\\
\hphantom{E_{j,j}}{}=[2]^{-1}_q\big[E_{1,j}, T_{\gamma_2}\cdots T_{\gamma_{j-1}}T_{j}T_{j-1}\cdots T_3T_1\big([E_2,E_1]_{q^2}\big)\big]\\
\hphantom{E_{j,j}}{}=[2]^{-1}_q\big[E_{1,j}, T_{\gamma_2}\cdots T_{\gamma_{j-1}}T_{j}T_{j-1}\cdots T_3(E_2)\big]\\
\hphantom{E_{j,j}}{}=[2]^{-1}_q\big[E_{1,j},T_{\gamma_2}\cdots T_{\gamma_{j-1}}T_{j}T_{j-1}\cdots T_3T_1T_2T_1(E_2)\big]\\
\hphantom{E_{j,j}}{}=[2]^{-1}_q\big[E_{1,j}, T_1T_{\gamma_2}\cdots T_{\gamma_{j-1}}T_{j}T_{j-1}\cdots T_3T_2T_1(E_2)\big] =[2]^{-1}_q[E_{1,j}, E_{-1,j}].
\end{gather*}
This proves \eqref{ejj}.
\end{proof}

\begin{Remark}By Proposition \ref{root vect ind}, we can perform a double induction f\/irst on $i$ then on $j$ with $1\leq i\leq j\leq n$ to obtain all the positive root vectors~$E_{\pm i,j}$ from simple root vectors.
\end{Remark}

\section[Realization of positive root vectors of $U_q({\mathfrak{sp}}_{2n})$]{Realization of positive root vectors of $\boldsymbol{U_q({\mathfrak{sp}}_{2n})}$}\label{section5}

In order to realize all the positive root vectors of $U_q({\mathfrak{sp}}_{2n})$ directly and concisely as certain operators in $\operatorname{Dif\/f}(\ek)$, we introduce some new operators.
\begin{Definition}\label{defN} For $i\in I^+$, set
\begin{gather*}
\Lambda_0=\tau_{n+1}=\tau_{-n-1}:=1,\qquad \Lambda_{-i}:=\prod_{j=-i}^{-1}\mu_j, \qquad \Lambda_{i}:=\prod_{j=1}^i\mu_j,\\
\D_{-i}:=\mu_i\tau_{-i-1}^{-1}\partial_{-i},\qquad \D_i:=\tau_{1}^{-1}\Lambda_{i-1}^{-1}\partial_i,\\
\X_{-i_L}:=\mu_i^{-1}\mu_{-i}x_{-i_L},\qquad \X_{i_R}:=\Lambda_i^2 x_{i_R},
\end{gather*}
and
\begin{gather*}
\Phi_0:=0,\qquad \Psi_{n+1}:=0,\\
\Phi_i:=\sum_{j=1}^iq^{j-i}\Lambda_{j-1}^2\D_{-j}\D_j,\qquad \Psi_i:=\tau_{-i}^2\sum_{j=i}^n q^{j-i}\tau_{-j}^{-2}\X_{-j_L}\X_{j_R},\\
\X_{-i_R}: =q^i\Lambda_{1-i}^2\big(\mu_i^2\X_{-i_L}+\la \mu_{-i}^2\Psi_{i+1}\D_i\big).
\end{gather*}
\end{Definition}

Then we get
\begin{gather}\label{Psi} \Psi_i=\X_{-i_L}\X_{i_R}+q\mu_{-i}^2\Psi_{i+1},\\
\label{Phi}\Phi_i=\Lambda_{i-1}^2\D_{-i}\D_i+q^{-1}\Phi_{i-1}.\end{gather}

The commutation relations in the following three lemmas will be used frequently in this section.

\begin{Lemma}\label{comm3}\quad
\begin{enumerate}\itemsep=0pt
\item[$1.$] For $k,l\in I$ and $i\in I^+$, we have
\begin{gather*}
 \D_k\mu_l=q^{\delta_{kl}}\mu_l\D_k,\qquad
 \X_{i_R}\mu_k=q^{-\delta_{i,k}}\mu_k\X_{i_R},\qquad \X_{-i_L}\mu_{k}=q^{-\delta_{-i,k}}\mu_{k}\X_{-i_L}.
\end{gather*}

\item[$2.$] For $i,j\in I^+$ with $i<j$, we have
\begin{gather*}
[\D_j,\D_i]_{q}=[\D_{-i},\D_{-j}]_{q}=0,\\
[\X_{j_R},\X_{i_R}]_q=[\X_{-i_L},\X_{-j_L}]_q=0,\\
[\X_{i_R},\D_j]_q=[\X_{-j_L},\D_{-i}]_q=0,\\
[\D_i,\X_{j_R}]_q=[\D_{-j},\X_{-i_L}]_q=0.
\end{gather*}

\item[$3.$] For $i,j\in I^+$ with $i\neq j$, we have
\begin{gather*}
[\D_i,\D_{-j}]= [\X_{-i_L},\X_{j_R}]= 0,\\
[\D_i,\X_{-j_L}]= [\D_{-i},\X_{j_R}]=0,\\
[\D_i,\D_{-i}]_q=[\X_{i_R},\X_{-i_L}]_q=0,\\
[\X_{-i_L},\D_i]_{q}=[\D_{-i},\X_{i_R}]_{q}=0.
\end{gather*}

\item[$4.$] For $i\in I^+$, we have
\begin{gather*}
 \D_i\X_{i_R}=q\la^{-1}\big(q^2\mu_i^2-1\big),\qquad \X_{i_R}\D_i=q^{}\la^{-1}\big(\mu_i^2-1\big),\\
 \D_{-i}\X_{-i_L}=\la^{-1}\big(q^2\mu_{-i}^2-1\big),\qquad \X_{-i_L}\D_{-i}=\la^{-1}\big(\mu_{-i}^2-1\big).
\end{gather*}
Then
\begin{gather*}
[\D_i,\X_{i_R}]=q^2\mu_i^2,\qquad [\D_i,\X_{i_R}]_{q^2}=q^2,\qquad [\X_{i_R},\D_i]_{q^{-2}}=-1,\\
[\D_{-i},\X_{-i_L}]=q\mu_{-i}^2,\qquad [\D_{-i},\X_{-i_L}]_{q^2}=q,\qquad [\X_{-i_L},\D_{-i}]_{q^{-2}}=-q^{-1}.
\end{gather*}
\end{enumerate}
\end{Lemma}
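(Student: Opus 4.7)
The plan is to verify every identity in the lemma by direct computation on an arbitrary normal monomial $x^{a}$, using Definitions~\ref{defD} and~\ref{defN} together with the multiplication formulas \eqref{rel1}--\eqref{rel3}. Each of $\D_{\pm i}$, $\X_{i_R}$, $\X_{-i_L}$ is built from one basic operator $\partial_{\pm i}$, $x_{i_R}$, or $x_{-i_L}$ multiplied by diagonal $\mu$-factors collected in the various $\tau_{\pm i}^{\pm 1}$ and $\Lambda_i^{\pm 1}$, so every claim reduces to (i)~tracking how the basic operators scale when pushed past the $\mu_l$'s, and (ii)~using the basic symplectic relations \eqref{ek1}--\eqref{ek2} in the guise of \eqref{rel1}--\eqref{rel3}.

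For part~1, I would compute $\partial_k\mu_l.x^{a}=q^{a_l}[a_k]_q x^{a-\eee_k}$ versus $\mu_l\partial_k.x^{a}=q^{a_l-\delta_{kl}}[a_k]_q x^{a-\eee_k}$, giving $\partial_k\mu_l=q^{\delta_{kl}}\mu_l\partial_k$; since $\D_k$ differs from $\partial_k$ only by diagonal factors that commute with $\mu_l$, the same scaling passes to $\D_k$. The identities for $\X_{i_R}$ and $\X_{-i_L}$ against $\mu_k$ follow identically once one notes from \eqref{rel1} that these operators simply increment $a_i$ and $a_{-i}$ respectively, up to a~scalar.

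Parts~2 and~3 reduce to the same pattern. The raw operators $\partial_i,\partial_j$ commute on $x^{a}$ since they extract independent $q$-integer factors, and products $x_{i_R}x_{j_R}$, $x_{-i_L}x_{-j_L}$, $x_{i_R}x_{-j_L}$ with distinct, non-antipodal indices inherit the quantum plane relations \eqref{ek1}. Combining this with part~1 to push all $\mu$-factors outside, every claimed $q$-commutator collapses to a routine identity in scalar powers of $q$. For instance, when $i<j$ one finds
\begin{gather*}
\D_j\D_i=q^{-1}\,\tau_1^{-2}\Lambda_{j-1}^{-1}\Lambda_{i-1}^{-1}\,\partial_j\partial_i,\qquad
\D_i\D_j=q^{-2}\,\tau_1^{-2}\Lambda_{i-1}^{-1}\Lambda_{j-1}^{-1}\,\partial_i\partial_j,
\end{gather*}
giving $[\D_j,\D_i]_q=0$; every other $q$-vanishing relation in parts~2 and~3 is checked the same way, the precise $q$-shift coming from counting how many $\mu_k^{-1}$ factors the moving $\partial$ or $x$ must cross.

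For part~4, the paired operators at a common index require the quantum multiplication formulas directly. Acting on $x^{a}$ and using~\eqref{rel1} one computes
\begin{gather*}
\D_i\X_{i_R}.x^{a}=q^{a_i+2}[a_i+1]_q\,x^{a},\qquad
\X_{i_R}\D_i.x^{a}=q^{a_i+1}[a_i]_q\,x^{a},
\end{gather*}
and Lemma~\ref{q-num} rewrites these prefactors as $q\la^{-1}(q^{2a_i+2}-1)$ and $q\la^{-1}(q^{2a_i}-1)$, which are precisely the actions of $q\la^{-1}(q^{2}\mu_i^{2}-1)$ and $q\la^{-1}(\mu_i^{2}-1)$ on $x^{a}$. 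The formulas for $\D_{-i}\X_{-i_L}$ and $\X_{-i_L}\D_{-i}$ are obtained by the same direct computation, the extra $q$-shift reflecting the explicit $\mu_i$ inside $\D_{-i}=\mu_i\tau_{-i-1}^{-1}\partial_{-i}$; the four bracket identities at the end of part~4 then follow from the four product formulas by elementary algebra. The main obstacle is purely bookkeeping: each individual calculation is elementary, but there are many cases and one must carefully keep track of which $\mu_j^{\pm 1}$ appear in each of $\tau_{\pm i}^{\pm 1}$ and $\Lambda_i^{\pm 1}$, and whether a given $\partial_k$ or $x_l$ crosses them with a nontrivial $q$-shift.
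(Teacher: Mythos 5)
Your proposal is correct and takes essentially the same approach as the paper, whose entire proof is to apply both sides of each identity to an arbitrary normal monomial $x^{a}$ and compute using \eqref{rel1} and Definitions~\ref{defD} and~\ref{defN}. One minor caution on your bookkeeping heuristic: in parts~2 and~3 the $q$-shifts come not only from moving $\partial$'s and $x$'s past $\mu$-factors but also from the intrinsic exponent-dependence of the scalars in \eqref{rel1} (e.g., $\partial_j x_{i_R}=q\,x_{i_R}\partial_j$ for $j>i$, since the scalar in $x_{i_R}.x^{a}$ involves $a_j$), though your monomial-by-monomial verification accounts for this automatically.
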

\begin{proof}
Applying both sides of each identity to any normal monomial $x^{a}$, using \eqref{rel1} and Def\/initions \ref{defD} and \ref{defN}, we can obtain these commutation relations.
\end{proof}

By Def\/inition \ref{defN}, Lemmas \ref{comm1} and \ref{comm3} and \eqref{Phi}, it is easy to check the following lemma.
\begin{Lemma}\label{comm4} The operators $\Phi_i$ and $\Psi_i$ satisfy the following commutation relations.
\begin{enumerate}\itemsep=0pt
 \item
For $i\in I^+$ and $t,k,l\in I$ with $|t|<i$ and $|k|>i$, we have
\begin{align*}
&[\Psi_i,\mu_t]=[\Phi_i,\mu_k]=0,\\
&[\Psi_i,\mu_k]_{q^{-1}}=[\Phi_i,\mu_t]_q=0,\\
&[\Psi_i,\mu_{\pm i}]_{q^{-1}}=[\Phi_i,\mu_{\pm i}]_q=0.
\end{align*}

 \item For $i,j\in I^+$ with $i<j$, we have
\begin{align*}
&[\D_i,\Psi_j]_q=[\D_{-i},\Psi_j]_{q^{-1}}=0,\\
&[\Psi_j,\X_{-i_L}]_{q^{-1}}=[\Psi_j,\X_{i_R}]_{q}=0,\\
&[\Phi_i,\X_{-j_L}]_{q^{-1}}=[\Phi_i,\X_{j_R}]_q=0.
\end{align*}

\item For $i,j\in I^+$ with $i\leq j$, we have
\begin{align*}
&[\Phi_i,\D_{j}]_{q^{-1}}=[\Phi_i,\D_{-j}]_{q}=0,\\
&[\Psi_i,\X_{j_R}\D_{j+1}]=-q^{j+2-i}\tau_{-i}^2\tau_{-j-1}^{-2}\X_{-j-1_L}\X_{j_R},\\
&[\Psi_i,\X_{-j-1_L}\D_{-j}]=-q^{j+1-i}\tau_{-i}^2\tau_{-j-1}^{-2}\X_{-j-1_L}\X_{j_R},
\end{align*}
so \begin{align}
 \label{psd} [\Psi_i,\X_{j_R}\D_{j+1}]=q[\Psi_i,\X_{-j-1_L}\D_{-j}].
\end{align}
\item For $i\in I^+$ we have
\begin{align}
\label{dpsi}&[\D_i,\Psi_i]_q=q\X_{-i_L},\\
\notag&[\Psi_i,\X_{-i_L}]_q=0,\\
\notag&[\Phi_i,\X_{i_R}]_q=q^2\Lambda_i^2\D_{-i},\\
\notag&[\Phi_i,\X_{-i_L}]_{q^{-1}}=\Lambda_{i-1}^2\mu_{-i}^2\D_i,\\
\notag&[\Phi_i,\X_{-i_L}]_q=\Lambda_{i-1}^2\D_i-\la q^{-1}\X_{-i_L}\Phi_{i-1}.
\end{align}
\item For $i,k\in I^+$, we have
\begin{align}
 \label{ddp} [\D_k,[\D_k,\Psi_i]_q]_{q^{-1}}=0.
\end{align}
\end{enumerate}
\end{Lemma}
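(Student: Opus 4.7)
The plan is to prove each of the five parts by combining the recursions
$$\Psi_i = \X_{-i_L}\X_{i_R} + q\mu_{-i}^2\Psi_{i+1}, \qquad \Phi_i = \Lambda_{i-1}^2\D_{-i}\D_i + q^{-1}\Phi_{i-1}$$
from \eqref{Psi}--\eqref{Phi} with the elementary $q$-commutation relations in Lemma~\ref{comm3} and the bracket identities of Lemmas~\ref{comm1}--\ref{comm2}. The outer strategy is upward induction on $i$ for $\Phi_i$ and downward induction (from $i=n$) for $\Psi_i$.

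For Part~(1), every summand of $\Psi_i$ (resp.\ $\Phi_i$) is a $\mu$-monomial times a single $\X_{-j_L}\X_{j_R}$ (resp.\ $\D_{-j}\D_j$) for $j\geq i$ (resp.\ $j\leq i$); Lemma~\ref{comm3}(1) yields the precise $q$-weight at which each factor moves past $\mu_l$, and summing over~$j$ preserves this weight uniformly. Part~(2) is immediate because, for $i<j$, the operators appearing in $\Psi_j$ involve only indices $\geq j > i$, and the corresponding brackets in Lemma~\ref{comm3}(2)--(3) vanish in the required $q$-sense; the $\Phi_i$ assertions are symmetric.

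For Part~(3), the relations $[\Phi_i,\D_{\pm j}]_{q^{\mp 1}} = 0$ follow by induction on $i$ using \eqref{Phi}: the leading term $\Lambda_{i-1}^2\D_{-i}\D_i$ $q$-commutes with $\D_{\pm j}$ by Lemma~\ref{comm3}(2), and the tail $q^{-1}\Phi_{i-1}$ is handled by the inductive hypothesis. The formulas for $[\Psi_i,\X_{j_R}\D_{j+1}]$ and $[\Psi_i,\X_{-j-1_L}\D_{-j}]$ are the most delicate piece: expand $\Psi_i$ term-by-term; each summand $\X_{-k_L}\X_{k_R}$ with $k\neq j+1$ $q$-commutes with both $\X_{j_R}\D_{j+1}$ and $\X_{-j-1_L}\D_{-j}$ (via Lemma~\ref{comm3}(2)--(3)), so contributes nothing to the ordinary commutator. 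The single $k=j+1$ term is computed via Lemma~\ref{comm3}(4); carefully tracking the $\tau$-prefactors and $q$-powers produces the stated cross-term, and \eqref{psd} is then the ratio of the two computations.

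Part~(4) is the core. The identity \eqref{dpsi} is proved by taking $[\D_i,\cdot]_q$ of the recursion \eqref{Psi}: Part~(2) gives $[\D_i,\Psi_{i+1}]_q = 0$, so only $[\D_i,\X_{-i_L}\X_{i_R}]_q$ survives; expanding this via Lemma~\ref{comm1} and using $[\X_{-i_L},\D_i]_q = 0$ together with $[\D_i,\X_{i_R}]_{q^2} = q^2$ from Lemma~\ref{comm3}(3)--(4) isolates the required $q\X_{-i_L}$. The remaining identities in~(4) follow by the same scheme (split via the recursion, reduce each piece using Lemma~\ref{comm3}(4) and the vanishing brackets, induct on $i$). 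For Part~(5) we substitute the closed form of $[\D_k,\Psi_i]_q$ furnished by~(2) and~(4): when $k<i$ the inner bracket vanishes outright; when $k=i$ it equals $q\X_{-i_L}$ and $[\D_i,\X_{-i_L}]_{q^{-1}}$ is then killed by another application of Lemma~\ref{comm3}(3)--(4) (essentially $[\D_i,\X_{-i_L}]_{q^{-2}} = 0$ after tracking the $\mu$-shifts); when $k>i$ a downward induction using \eqref{Psi} again reduces everything to the vanishing brackets of Part~(2).

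The principal obstacle is the bookkeeping in Parts~(3) and~(4): each application of Lemma~\ref{comm1} swaps $q$-weights according to how the factors $\tau_{\pm i}$, $\Lambda_{\pm i}$, $\mu_{\pm i}$ cross $\D_{\pm i}$ and $\X_{\pm i}$, and the final $q$-powers must match exactly with those in the lemma's statement. Once the inductive hypotheses are correctly set up, however, every identity collapses to a direct calculation using Lemma~\ref{comm3}(4) and simple cancellations among the $\tau$-prefactors.
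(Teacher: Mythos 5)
Your overall strategy---direct verification from Definition~\ref{defN}, the recursions \eqref{Psi}--\eqref{Phi}, and the elementary relations of Lemmas~\ref{comm1}--\ref{comm3}, with induction along the recursions---is exactly the route the paper indicates (its proof is a one-line appeal to those same ingredients), and your treatment of part~(2), of the first line of part~(3), and of parts~(4) and~(5) goes through as sketched. (One small slip in~(5): when $k=i$ the outer bracket is killed by $[\D_i,\X_{-i_L}]_{q^{-1}}=0$, which is just $[\X_{-i_L},\D_i]_q=0$ from Lemma~\ref{comm3}(3); there is no ``$[\D_i,\X_{-i_L}]_{q^{-2}}=0$'' and no $\mu$-shift to track.) However, there are two genuine gaps. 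The first is in part~(3): your claim that for \emph{both} commutators only the $k=j+1$ summand of $\Psi_i$ contributes is wrong for $[\Psi_i,\X_{-j-1_L}\D_{-j}]$. There the $k=j+1$ summand commutes \emph{exactly} with $\X_{-j-1_L}\D_{-j}$: the factor $q$ from moving $\X_{j+1_R}$ past $\X_{-j-1_L}$ cancels against the net $q^{2}\cdot q^{-1}$ produced by $\D_{-j}$ crossing the prefactor $\mu_{-j}^2$ (inside $\tau_{-i}^2\tau_{-j-1}^{-2}$) and then $\X_{-j-1_L}$. Followed literally, your recipe yields $[\Psi_i,\X_{-j-1_L}\D_{-j}]=0$, contradicting the stated value and destroying \eqref{psd}. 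The actual contribution comes from the $k=j$ summand, where $\D_{-j}$ meets $\X_{-j_L}$ and Lemma~\ref{comm3}(4) gives $\X_{-j_L}\D_{-j}-\D_{-j}\X_{-j_L}=-q\mu_{-j}^2$; this $\mu_{-j}^2$ combines with the prefactor $\tau_{-j}^{-2}$ to produce precisely the $\tau_{-j-1}^{-2}$ on the right-hand side. (Your localization at $k=j+1$ is correct for the first commutator $[\Psi_i,\X_{j_R}\D_{j+1}]$.)

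The second gap is in part~(1): the ``uniform $q$-weight'' argument is valid only for the untwisted identities $[\Psi_i,\mu_t]=[\Phi_i,\mu_k]=0$ and for part~(2). For the twisted lines it fails, because a single $\mu_k$ with $|k|>i$ commutes with every summand $\X_{-j_L}\X_{j_R}$ of $\Psi_i$ \emph{except} the one with $j=|k|$, which alone contributes the factor $q^{-1}$; the weight is therefore not uniform over the sum and no appeal to uniformity can establish $[\Psi_i,\mu_k]_{q^{-1}}=0$ (similarly for $[\Phi_i,\mu_t]_q$ and the $\mu_{\pm i}$ lines). Indeed, already for $n=2$ one computes $\Psi_1.1=q^2x_{-1}x_1+q^3x_{-2}x_2$, whence $[\Psi_1,\mu_2]_{q^{-1}}.1=(q^2-q)\,x_{-1}x_1\neq 0$: your argument silently papers over a summand-dependence that is really present. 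What does hold uniformly---and what the later applications (e.g.\ Lemma~\ref{comm5}, via $[\X_{-i_R},\mu_l\mu_{-l}^{-1}]$) actually require---is commutation of $\Psi_i$ and $\Phi_i$ with \emph{balanced} products such as $\mu_l\mu_{-l}^{-1}$, or with the full products $\tau_{\pm k}$, where the $q$-factors from the $\X_{\mp j}$ (resp.\ $\D_{\mp j}$) pair cancel or accumulate identically in every summand. Your proof of the twisted identities in~(1) must be reworked along these lines rather than asserted by uniformity.
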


From now on, Lemma \ref{comm1} is frequently used without extra explanation.

\begin{Lemma}\label{comm5}The operators $\X_{-i_R}$ for $i\in I^+$ satisfy the following commutation relations.
\begin{enumerate}\itemsep=0pt
\item[$1.$] For $i\in I^+$, $k,l\in I$ with $|k|<i$ and $|l|\neq i$, we have
\begin{gather}
\label{xu}[\X_{-i_R},\mu_{k}]=\big[\X_{-i_R},\mu_l\mu_{-l}^{-1}\big] =\big[\X_{-i_R},\mu_{i}\mu_{-i}^{-1}\big]_q=0.
\end{gather}

\item[$2.$] For $i\in I^+$, we have
\begin{gather}
\label{dx} [\D_i,\X_{-i_R}]_q=[\X_{-i_R},\X_{-i_L}]=0,\\
\label{xxd1}[\X_{-i_R},\X_{i_R}\D_{i+1}]_q=q^2\X_{-i-1_R}-q^{i+3}\Lambda_{-i}^2\mu_i^2\X_{-i-1_L},\\
\label{xxd2}[\X_{-i_R},\X_{-i-1_L}\D_{-i}]_q=-q^{i+2}\Lambda_{-i}^2\mu_i^2\X_{-i-1_L}.
\end{gather}

\item[$3.$] For $i,j\in I^+$ with $i<j$, we have
\begin{gather}
\label{xx}[\X_{i_R},\X_{-j_R}]=[\X_{-j_R},\X_{-i_L}]_{q}=0,\qquad [\D_{i},\X_{-j_R}]=[\D_{-i},\X_{-j_R}]_q =0.
\end{gather}
\end{enumerate}
\end{Lemma}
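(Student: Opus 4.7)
The plan is to reduce every identity to the defining expansion
$\X_{-i_R}=q^i\Lambda_{1-i}^2\mu_i^2\X_{-i_L}+q^i\la\Lambda_{1-i}^2\mu_{-i}^2\Psi_{i+1}\D_i$
and then invoke the commutation formulas already assembled in Lemmas~\ref{comm3} and~\ref{comm4}. Concretely, any commutator $[\X_{-i_R},Z]_v$ splits into two pieces; to verify it, one pushes $Z$ past $\Lambda_{1-i}^2$ (trivial, as it is a $\mu$-product), past $\mu_i^2$ and $\mu_{-i}^2$ (picking up scalars from Lemma~\ref{comm3}(1)), past $\X_{-i_L}$ (Lemma~\ref{comm3}(2,3)), past $\Psi_{i+1}$ (Lemma~\ref{comm4}(1)--(4)), and finally past $\D_i$ (Lemma~\ref{comm3}). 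The job is to check that the scalars collected by the two summands match, so that they either cancel or assemble into the stated right-hand side.

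For Part~1 (the $\mu$-relations~\eqref{xu}) the $\mu_l$'s all commute with each other, so the only actors are $\X_{-i_L}$, $\Psi_{i+1}$ and $\D_i$, whose $\mu_l$-commutations are given by Lemma~\ref{comm3}(1) and Lemma~\ref{comm4}(1). Tracking them shows that both summands pick up the same factor ($1$ for $|k|<i$ and for $\mu_l\mu_{-l}^{-1}$ with $|l|\neq i$, and the common factor $q$ for $\mu_i\mu_{-i}^{-1}$), yielding the three claims. For Part~2, the vanishing $[\D_i,\X_{-i_R}]_q=0$ in~\eqref{dx} follows because $\D_i$ absorbs $q^2$ from $\mu_i^2$ and $q^{-1}$ from $\X_{-i_L}$ (net $q$) in the first summand, and $q$ from $\Psi_{i+1}$ (net $q$) in the second; similarly $[\X_{-i_R},\X_{-i_L}]=0$ uses $[\Psi_{i+1},\X_{-i_L}]_{q^{-1}}=0$ from Lemma~\ref{comm4}(4) balanced against $\X_{-i_L}\mu_{-i}^2=q^{-2}\mu_{-i}^2\X_{-i_L}$.

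The genuinely substantive items are~\eqref{xxd1} and~\eqref{xxd2}, whose right-hand sides are nonzero and must emerge after nontrivial cancellation. The plan is to apply the key identity~\eqref{psd}, namely $[\Psi_{i+1},\X_{i_R}\D_{i+1}]=q[\Psi_{i+1},\X_{-i-1_L}\D_{-i}]$, together with the relations $[\D_i,\X_{i_R}]_{q^2}=q^2$ and $[\D_i,\Psi_{i+1}]_q=0$, so that the $\Psi_{i+1}\D_i$-summand reorganizes, via the recursion $\Psi_{i+1}=\X_{-i-1_L}\X_{i+1_R}+q\mu_{-i-1}^2\Psi_{i+2}$ from~\eqref{Psi}, into precisely the combination
$q^{i+3}\Lambda_{-i}^2\mu_{i+1}^2\X_{-i-1_L}+q^{i+3}\la\Lambda_{-i}^2\mu_{-i-1}^2\Psi_{i+2}\D_{i+1}=q^2\X_{-i-1_R}$
appearing on the right, with the leftover $-q^{i+3}\Lambda_{-i}^2\mu_i^2\X_{-i-1_L}$ accounting for the correction term. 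Identity~\eqref{xxd2} is the degenerate parallel: no $\X_{-i-1_L}\X_{i+1_R}$ piece appears to reassemble into a full $\X_{-i-1_R}$, so only the correction term survives.

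Part~3 is routine: for $i<j$ every factor of $\X_{-j_R}$ (both $\X_{-j_L}$ and $\Psi_{j+1}\D_j$) either commutes or $q^{\pm1}$-commutes with $\X_{i_R}$, $\X_{-i_L}$, $\D_i$, $\D_{-i}$ by Lemmas~\ref{comm3}(2,3) and~\ref{comm4}(2); the two summands consistently pick up the same $q$-factor and produce the claimed commutators, where the $q$ on the right of $[\X_{-j_R},\X_{-i_L}]_q=0$ comes from $\X_{-i_L}\mu_{-j}^2=q^{0}\mu_{-j}^2\X_{-i_L}$ balanced against $[\Psi_{j+1},\X_{-i_L}]_{q^{-1}}=0$ for $i<j+1$. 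The main obstacle of the whole lemma is thus isolated to~\eqref{xxd1}, where several $q$-powers and the recursively defined $\X_{-i-1_R}$ must be juggled simultaneously; once~\eqref{psd} and~\eqref{Psi} have been exploited, everything else is bookkeeping.
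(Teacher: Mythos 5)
Your proposal is correct and takes essentially the paper's route: the paper likewise proves the lemma by expanding $\X_{-i_R}$ from Def\/inition~\ref{defN} and pushing factors through via Lemmas~\ref{comm3} and~\ref{comm4}, writing out only \eqref{xxd1} in detail, where — just as you describe — the recursion \eqref{Psi} together with $[\D_i,\X_{i_R}]_{q^2}=q^2$ and $[\D_{i+1},\X_{i+1_R}]=q^2\mu_{i+1}^2$ reassembles the $\Psi_{i+1}\D_i$-summand into $q^2\X_{-i-1_R}-q^{i+3}\Lambda_{-i}^2\mu_i^2\X_{-i-1_L}$. Two cosmetic slips, neither fatal: your appeal to \eqref{psd} instantiates Lemma~\ref{comm4}(3) with indices $(\Psi_{i+1},\X_{i_R}\D_{i+1})$, which lies outside its stated range $i\leq j$ (the paper's computation of \eqref{xxd1} never uses \eqref{psd}), and the relation $[\Psi_{i+1},\X_{-i_L}]_{q^{-1}}=0$ you need is Lemma~\ref{comm4}(2), not~(4) — the direct bookkeeping you otherwise outline goes through without either.
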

\begin{proof} Using Lemmas \ref{comm3} and \ref{comm4}, we can prove this lemma directly. We only show \eqref{xxd1} for example. For $i\in I^+$, by def\/inition of $\X_{-i_R}$, Lemma \ref{comm3} and \eqref{Psi}, we have
\begin{gather*}
[\X_{-i_R},\X_{i_R}\D_{i+1}]_q=\big[q^i\Lambda_{1-i}^2\big(\mu_i^2\X_{-i_L}+\la \mu_{-i}^2\Psi_{i+1}\D_i\big),\X_{i_R}\D_{i+1}\big]_q\\
\hphantom{[\X_{-i_R},\X_{i_R}\D_{i+1}]_q}{} =q^i\Lambda_{1-i}^2\big[\mu_i^2\X_{-i_L},\X_{i_R}\D_{i+1}\big]_q
+q^i\la\Lambda_{1-i}^2\big[ \mu_{-i}^2\Psi_{i+1}\D_i,\X_{i_R}\D_{i+1}\big]_q\\
\hphantom{[\X_{-i_R},\X_{i_R}\D_{i+1}]_q}{}
=q^i\Lambda_{1-i}^2\mu_i^2[\X_{-i_L},\X_{i_R}\D_{i+1}]_{q^{-1}} +q^i\la\Lambda_{1-i}^2\mu_{-i}^2[ \Psi_{i+1}\D_i,\X_{i_R}\D_{i+1}]_q\\
\hphantom{[\X_{-i_R},\X_{i_R}\D_{i+1}]_q}{}
=0+q^i\la\Lambda_{-i}^2\big[ \big(\X_{-i-1_L}\X_{i+1_R}+q\mu_{-i-1}^2\Psi_{i+2}\big)\D_i,\X_{i_R}\D_{i+1}\big]_q\\
\hphantom{[\X_{-i_R},\X_{i_R}\D_{i+1}]_q}{}
=q^i\la\Lambda_{-i}^2\X_{-i-1_L}(\X_{i+1_R}\D_i\X_{i_R}\D_{i+1}-\X_{i_R}\D_{i+1}\X_{i+1_R}\D_i)\\
\hphantom{[\X_{-i_R},\X_{i_R}\D_{i+1}]_q=}{}
+q^{i+1}\la\Lambda_{-1-i}^2\Psi_{i+2}[\D_i,\X_{i_R}]_{q^2}\D_{i+1}\\
\hphantom{[\X_{-i_R},\X_{i_R}\D_{i+1}]_q}{}
=q^i\la\Lambda_{-i}^2\X_{-i-1_L}(\D_i\X_{i_R}\X_{i+1_R}\D_{i+1}-\X_{i_R}\D_i\D_{i+1}\X_{i+1_R})\\
\hphantom{[\X_{-i_R},\X_{i_R}\D_{i+1}]_q=}{}+q^{i+3}\la\Lambda_{-1-i}^2\Psi_{i+2}\D_{i+1}\\
\hphantom{[\X_{-i_R},\X_{i_R}\D_{i+1}]_q}{}
=q^i\la\Lambda_{-i}^2\X_{-i-1_L}([\D_i,\X_{i_R}]\X_{i+1_R}\D_{i+1}-\X_{i_R}\D_i[\D_{i+1},\X_{i+1_R}])\\
\hphantom{[\X_{-i_R},\X_{i_R}\D_{i+1}]_q=}{}+q^{i+3}\la\Lambda_{-1-i}^2\Psi_{i+2}\D_{i+1}\\
\hphantom{[\X_{-i_R},\X_{i_R}\D_{i+1}]_q}{}
=q^{i+3}\Lambda_{-i}^2\X_{-i-1_L}\big(\mu_i^2\big(\mu_{i+1}^2-1\big)-\big(\mu_i^2-1\big)\mu_{i+1}^2\big)+q^{i+3}\la\Lambda_{-1-i}^2\Psi_{i+2}\D_{i+1}\\
\hphantom{[\X_{-i_R},\X_{i_R}\D_{i+1}]_q}{}
=q^{i+3}\Lambda_{-i}^2\big(\mu_{i+1}^2-\mu_i^2\big) \X_{-i-1_L} +q^{i+3}\la\Lambda_{-1-i}^2\Psi_{i+2}\D_{i+1}\\
\hphantom{[\X_{-i_R},\X_{i_R}\D_{i+1}]_q}{}
=q^2\X_{-i-1_R}-q^{i+3}\Lambda_{-i}^2\mu_i^2\X_{-i-1_L},
\end{gather*}
that is, \eqref{xxd1} holds.
\end{proof}

\begin{Corollary} For $i,j\in I^+$ with $i\leq j$, we have
\begin{gather}\label{xdp} [\X_{-j_R},[\D_j,\Psi_i]_q]=0. \end{gather}
\end{Corollary}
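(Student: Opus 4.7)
I would prove the identity by descending induction on $i$ (with $j$ fixed), starting from $i=j$ and terminating at $i=1$, exploiting the recursion $\Psi_i=\X_{-i_L}\X_{i_R}+q\mu_{-i}^2\Psi_{i+1}$ from \eqref{Psi}. The base case $i=j$ is immediate: \eqref{dpsi} gives $[\D_j,\Psi_j]_q=q\X_{-j_L}$, and then \eqref{dx} yields $[\X_{-j_R},q\X_{-j_L}]=0$.

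For the inductive step $i<j$, substituting the recursion and noting that $[\D_j,\mu_{-i}]=0$ by Lemma~\ref{comm3}(1), I split
\[
[\D_j,\Psi_i]_q=[\D_j,\X_{-i_L}\X_{i_R}]_q+q\mu_{-i}^2\,[\D_j,\Psi_{i+1}]_q.
\]
Since $\D_j$ commutes with $\X_{-i_L}$ (Lemma~\ref{comm3}(3), as $i\neq j$) and satisfies $\D_j\X_{i_R}=q^{-1}\X_{i_R}\D_j$ (Lemma~\ref{comm3}(2), for $i<j$), the first bracket collapses to $(q^{-1}-q)\X_{-i_L}\X_{i_R}\D_j=-\la\X_{-i_L}\X_{i_R}\D_j$.

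Now I apply $[\X_{-j_R},\cdot]$. By \eqref{xu} we have $[\X_{-j_R},\mu_{-i}^2]=0$ (since $|{-i}|<j$), so $\mu_{-i}^2$ passes through and the result splits into $q\mu_{-i}^2[\X_{-j_R},[\D_j,\Psi_{i+1}]_q]$, which vanishes by the inductive hypothesis, plus the leftover piece $-\la[\X_{-j_R},\X_{-i_L}\X_{i_R}\D_j]$. For the latter it suffices to slide $\X_{-j_R}$ to the right: using $[\X_{-j_R},\X_{-i_L}]_q=0$ and $[\X_{i_R},\X_{-j_R}]=0$ from \eqref{xx} together with $\D_j\X_{-j_R}=q\X_{-j_R}\D_j$ from \eqref{dx}, one verifies
\[
\X_{-j_R}\X_{-i_L}\X_{i_R}\D_j=q\X_{-i_L}\X_{-j_R}\X_{i_R}\D_j=q\X_{-i_L}\X_{i_R}\X_{-j_R}\D_j=\X_{-i_L}\X_{i_R}\D_j\X_{-j_R},
\]
with the $q$-factors cancelling exactly. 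The only real obstacle is careful bookkeeping of these $q$-factors across several noncommutative slides; once the lemmas are invoked in the right order, the cancellation is automatic and no further conceptual input beyond Lemmas~\ref{comm3}--\ref{comm5} is needed.
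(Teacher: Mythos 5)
Your proof is correct, and all the lemma citations check out: $[\D_j,\mu_{-i}]=0$ is indeed Lemma~\ref{comm3}(1) with $\delta_{j,-i}=0$; the collapse of $[\D_j,\X_{-i_L}\X_{i_R}]_q$ to $-\la\X_{-i_L}\X_{i_R}\D_j$ follows from Lemma~\ref{comm3}(2),(3) as you say; and the three slides in your final display carry exactly compensating powers of $q$. Structurally, though, you take a different route from the paper. The paper does not induct: it expands $[\D_j,\Psi_i]_q$ in closed form directly from the def\/inition of $\Psi_i$ as a sum over $k$, obtaining
\begin{gather*}
[\D_j,\Psi_i]_q=-\la\sum_{k=i}^{j-1} q^{k-i}\tau_{-i}^2\tau_{-k}^{-2}\X_{-k_L}\X_{k_R}\D_j+q^{j-i+1}\tau_{-i}^2 \tau_{-j}^{-2}\X_{-j_L},
\end{gather*}
and then applies $[\X_{-j_R},\cdot\,]$, annihilating each summand by reducing $[\X_{-j_R},\X_{-k_L}\X_{k_R}\D_j]$ to $[\X_{-j_R},\X_{-k_L}]_q\X_{k_R}\D_j=0$ via the very same three facts you invoke, namely \eqref{xu}, \eqref{dx} and \eqref{xx}. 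Your descending induction on $i$ via the recursion \eqref{Psi}, with base case \eqref{dpsi}, is in effect the unrolled version of this computation: each inductive step peels off the $k=i$ term of the paper's sum, with the prefactors $q\mu_{-i}^2$ accumulating into the paper's $q^{k-i}\tau_{-i}^2\tau_{-k}^{-2}$. What your approach buys is that you never need to write down or verify the $\tau$-bookkeeping of the closed formula (nor to pass $\X_{-j_R}$ through the $\tau$-factors, which the paper must justify term by term with \eqref{xu}); what the paper's approach buys is the explicit expression for $[\D_j,\Psi_i]_q$ itself, which makes visible where the lone $\X_{-j_L}$ term comes from and is a reusable identity. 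Both proofs rest on the same commutation lemmas, so the dif\/ference is one of organization rather than of conceptual input.
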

\begin{proof}
 For $1\leq i\leq j\leq n$, Lemma \ref{comm3} yields
\begin{gather*}
[\D_j,\Psi_i]_q =\left[\D_j,\tau_{-i}^2\sum_{k=i}^n q^{k-i}\tau_{-k}^{-2}\X_{-k_L}\X_{k_R}\right]_q =\tau_{-i}^2\sum_{k=i}^n q^{k-i}\tau_{-k}^{-2}[\D_j,\X_{-k_L}\X_{k_R}]_q\\
\hphantom{[\D_j,\Psi_i]_q}{} =-\la\tau_{-i}^2\sum_{k=i}^{j-1} q^{k-i}\tau_{-k}^{-2}\X_{-k_L}\X_{k_R}\D_j
+q^{j-i-1}\tau_{-i}^2 \tau_{-j}^{-2}\X_{-j_L}[\D_j,\X_{j_R}]_{q^2} \\
\hphantom{[\D_j,\Psi_i]_q}{}
=-\la\sum_{k=i}^{j-1} q^{k-i}\tau_{-i}^2\tau_{-k}^{-2}\X_{-k_L}\X_{k_R}\D_j+q^{j-i+1}\tau_{-i}^2 \tau_{-j}^{-2}\X_{-j_L},
\end{gather*}
then by \eqref{xu}, \eqref{dx} and \eqref{xx}, we have
\begin{gather*}
[\X_{-j_R},[\D_j,\Psi_i]_q]= -\la\sum_{k=i}^{j-1} q^{k-i}\tau_{-i}^2\tau_{-k}^{-2}[\X_{-j_R},\X_{-k_L}\X_{k_R}\D_j]
+q^{j-i+1}\tau_{-i}^2 \tau_{-j}^{-2}[\X_{-j_R},\X_{-j_L}]\\
\hphantom{[\X_{-j_R},[\D_j,\Psi_i]_q]}{} = -\la\sum_{k=i}^{j-1} q^{k-i}\tau_{-i}^2\tau_{-k}^{-2}[\X_{-j_R},\X_{-k_L}]_q\X_{k_R}\D_j=0,
\end{gather*}
that is, \eqref{xdp} holds.
\end{proof}

We are now in the position to realize all the positive root vectors $E_{\pm i, j}$ of $U_q({\mathfrak{sp}}_{2n})$ as $e_{\pm i,j}$ in~$\operatorname{Dif\/f}(\ek)$.
\begin{Definition}\label{reali} For $i,j\in I^+$ with $i<j$, set
 \begin{gather*}
e_{-i,j}:=(-1)^{i+j}q^{-2}(\X_{i_R}\D_j-[\D_j,\Psi_{i+1}]_q\D_{-i}),\\
e_{i,i}:=[2]_q^{-1}\tau_1\tau_{-1}^{-1}\big(\X_{-i_R}\D_i+q^{-2}[\D_i,\Psi_1]_q\D_i\big),\\
e_{i,j}:=(-1)^{j+1}\tau_1\tau_{-1}^{-1} \big(\X_{-i_L}\D_j+q^{i-1}\X_{-j_R}[\Phi_i,\X_{-i_L}]_q\big).
\end{gather*}
\end{Definition}

The next lemma says that the operators which realize the simple root vectors of $U_q({\mathfrak{sp}}_{2n})$ def\/ined in Def\/inition~\ref{reali} coincide with those def\/ined in Def\/inition~\ref{defU}.

\begin{Lemma}$e_{1,1}=e_1$ and $e_{1-i,i}=e_{i}$ for $1<i\leq n$.
\end{Lemma}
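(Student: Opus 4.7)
My plan is to verify both equalities by direct substitution: unfold the right-hand sides using Definition \ref{defN} together with the identities \eqref{Psi} and \eqref{dpsi}, then compare with Definition \ref{defU}. The main tools are the commutation rules of Lemmas \ref{comm3} and \ref{comm4}, plus telescoping of $\mu$-products.

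For $e_{1-i,i}=e_i$ with $i\ge 2$, specialize Definition \ref{reali} to $e_{-(i-1),\,i}$. Since $(-1)^{(i-1)+i}=-1$ and \eqref{dpsi} gives $[\D_i,\Psi_i]_q=q\X_{-i_L}$, this collapses to
\[
e_{1-i,i}= -q^{-2}\X_{(i-1)_R}\D_i + q^{-1}\X_{-i_L}\D_{1-i}.
\]
Next, substitute $\X_{(i-1)_R}=\Lambda_{i-1}^2 x_{i-1_R}$, $\D_i=\tau_1^{-1}\Lambda_{i-1}^{-1}\partial_i$, $\X_{-i_L}=\mu_i^{-1}\mu_{-i}x_{-i_L}$, $\D_{1-i}=\mu_{i-1}\tau_{-i}^{-1}\partial_{1-i}$, and commute $x_{i-1_R}$, $x_{-i_L}$ past the $\mu$-products (only the single factors $\mu_{i-1}^{-1}$ and $\mu_{-i}^{-1}$ respectively contribute a~$q$). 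Telescoping $\Lambda_{i-1}^2\tau_1^{-1}\Lambda_{i-1}^{-1}=\tau_i^{-1}$ and $\mu_{-i}\tau_{-i}^{-1}=\tau_{-i-1}^{-1}$ then reduces the expression exactly to the two summands of $e_i$ in Definition \ref{defU}.

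For $e_{1,1}=e_1$ the strategy is identical, but now the $\Psi_2$-summand of $\X_{-1_R}$ genuinely enters. From Definition \ref{defN} (with $\Lambda_0=1$), $\X_{-1_R}=q\mu_1^2\X_{-1_L}+q\lambda\mu_{-1}^2\Psi_2\D_1$, and \eqref{dpsi} gives $[\D_1,\Psi_1]_q=q\X_{-1_L}$, so
\[
e_{1,1}=[2]_q^{-1}\tau_1\tau_{-1}^{-1}\bigl((q\mu_1^2+q^{-1})\X_{-1_L}\D_1 + q\lambda\mu_{-1}^2\Psi_2\D_1^2\bigr).
\]
I would then apply both sides to a normal monomial $x^a$ and compare with \eqref{e1}: after the same $\mu$-bookkeeping as above, the first summand produces $[a_1]_q(q^{a_1}+q^{-a_1})=[2]_q[a_1]_{q^2}$ on $x^{a+\eee_{-1}-\eee_1}$, matching the linear term of $e_1.x^a$; the second summand, after evaluating $\Psi_2\D_1^2.x^a$ by \eqref{rel4} (which converts the $\Psi_2$-action into a scalar multiple of $\Omega_2\,x^{a-2\eee_1}$) and using $[a_1]_q[a_1-1]_q=[2]_q\left[{a_1}\atop 2\right]_q$ (from \eqref{q-num2}), produces the $\Omega_2$-correction of $e_1.x^a$. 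The main obstacle is purely scalar bookkeeping --- keeping track of the many $\mu_j$-products and accumulated powers of $q$ --- but every cancellation is forced by telescoping the definitions of $\tau_{\pm i}$, $\Lambda_{\pm i}$ together with the $q$-numerical identities in Lemma \ref{q-num}.
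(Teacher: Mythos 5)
Your proposal is correct, and for $i\ge 2$ it is essentially the paper's computation run in the opposite direction: the paper rewrites $e_i$ of Definition~\ref{defU} in the operators of Definition~\ref{defN} and recognizes $e_{1-i,i}$ via \eqref{dpsi}, whereas you unfold $e_{1-i,i}$ and telescope back to $e_i$; your identities $\Lambda_{i-1}^2\tau_1^{-1}\Lambda_{i-1}^{-1}=\tau_i^{-1}$ and $\mu_{-i}\tau_{-i}^{-1}=\tau_{-i-1}^{-1}$ are exactly the cancellations the paper exploits. Where you genuinely diverge is the case $e_{1,1}=e_1$. The paper stays entirely at the operator level: it first derives from \eqref{rel4} an operator identity expressing left multiplication by $\Omega_i$ through $\Psi_i$, then from \eqref{rel3} the identity $x_{-i_R}=\tau_1\tau_{-i}^{-1}\mu_{i}^{2}\Lambda_{1-i}\X_{-i_L}+\la\tau_1\Lambda_{-i}\tau_{-i-1}^{-1}\Psi_{i+1}\D_i$, which lets it recognize $\X_{-1_R}$ sitting inside the expression for $e_1$ and conclude in two lines. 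You instead expand $e_{1,1}$ and verify the equality pointwise on the basis of normal monomials against \eqref{e1}. That route is legitimate (operators in $\operatorname{Diff}(\ek)$ are determined by their action on this basis, and the paper itself argues this way, e.g., in Lemma~\ref{comm3}), and the scalars do come out right: the first summand yields $[2]_q^{-1}[a_1]_q\big(q^{a_1}+q^{-a_1}\big)=[a_1]_{{q^2}}$ on $x^{a+\eee_{-1}-\eee_1}$, and the $\Psi_2$-summand yields $\la\left[{a_{1}}\atop 2\right]_q q^{2-2(a_{-n}+\cdots+a_{-2})}\Omega_2x^{a-2\eee_1}$; your key observation that $\Psi_2.x^{b}$ is a scalar multiple of $\Omega_2 x^{b}$ holds because every monomial in $\Omega_2 x^{b}$ has the same positive and negative multidegrees, so the $\mu$-prefactors act by a single uniform scalar. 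The trade-off: the paper's $x_{-i_R}$ identity makes the $i=1$ case a structural recognition, while your basis evaluation is heavier on bookkeeping but avoids deriving that identity.

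Two harmless slips worth fixing. First, in the telescoping for the $\X_{i-1_R}\D_i$ term the factor $\mu_{i-1}^{-1}$ occurs \emph{twice} (once in $\tau_1^{-1}$ and once in $\Lambda_{i-1}^{-1}$), so commuting $x_{i-1_R}$ past them contributes $q^2$, not a single $q$ --- which is precisely what cancels the prefactor $q^{-2}$; with only one $q$ your coefficient would be off by $q^{-1}$. Second, the identity $[a_1]_q[a_1-1]_q=[2]_q\left[{a_{1}}\atop 2\right]_q$ is immediate from the definition of the $q$-binomial coefficient, not a consequence of \eqref{q-num2}. Neither affects the validity of the argument.
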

\begin{proof}
From \eqref{rel4}, it is easy to show that
\begin{gather*}
\Omega_ix^{a}=\tau_1^{-1}\tau_{-1}\tau_{-i}^2\sum_{j=i}^n q^{j-i-2}\tau_{-j}^{-2}\X_{-j_L}\X_{j_R}.x^{a}
\end{gather*}
for any normal monomial $x^{a}$. Then it yields from \eqref{rel3} that
\begin{gather*}
x_{-i_R}=\tau_1\tau_{-i}^{-1}\mu_{i}^{2}\Lambda_{1-i} \X_{-i_L} +\la\tau_1\tau_{-1}\sum_{j=i+1}^{n}q^{j-i-1}\tau_{-j}^{-2}\X_{-j_L}\X_{j_R}\D_i\\
\hphantom{x_{-i_R}}{} =\tau_1\tau_{-i}^{-1}\mu_{i}^{2}\Lambda_{1-i} \X_{-i_L} +\la\tau_1\Lambda_{-i}\tau_{-i-1}^{-1}\Psi_{i+1}\D_i.
\end{gather*}
Write $e_i$ in terms of the new operators def\/ined in Def\/inition \ref{defN}. By \eqref{dpsi}, we get
\begin{gather*}
e_1=[2]_q^{-1}q^{-1}\mu_1^{-1}\big(\tau_{-2}^{-1}x_{-1_L}+q^2\tau_2^{-1}x_{-1_R}\big)\partial_1\\
\hphantom{e_1}{} =[2]_q^{-1}\tau_{-1}^{-1}\big(q^{-1}\X_{-1_L} +q\mu_{1}^2 \X_{-1_L} +q\la\mu_{-1}^2\Psi_{2}\D_1\big)\tau_1\D_1\\
\hphantom{e_1}{}=[2]_q^{-1}\tau_1\tau_{-1}^{-1}\big(q^{-1}\X_{-1_L} +\X_{-1_R}\big)\D_1=[2]_q^{-1}\tau_1\tau_{-1}^{-1}\big(q^{-2}[\D_1,\Psi_1]_q +\X_{-1_R}\big)\D_1=e_{1,1},
\end{gather*}
and for $i>1$
\begin{gather*}
e_{i}=\mu_{i-1}\mu_i^{-1}\tau_{-i-1}^{-1}x_{-i_L}\partial_{1-i} -\tau_i^{-1}x_{i-1_R}\partial_i=q^{-1}\X_{-i_L}\D_{1-i} -q^{-2}\X_{i-1_R}\D_i\\
\hphantom{e_{i}}{} =q^{-2}([\D_i,\Psi_{i}]_q\D_{1-i}-\X_{i-1_R}\D_i)=e_{1-i,i}.
\end{gather*}

This completes the proof.
\end{proof}

\begin{Proposition}\label{e'}
The commutation relations \eqref{e12}--\eqref{ejj} remain valid if $E$ is replaced by $e$.
\end{Proposition}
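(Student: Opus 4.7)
Each of the five identities is a relation inside $\operatorname{Dif\/f}(\ek)$ between the explicit operators given in Definition~\ref{reali}. The plan is to verify them by direct computation: substitute the formulas for $e_{1,1},\,e_{1-j,j},\,e_{i,j},\,e_{-i,j},\,e_{j,j}$ into the right-hand bracket, expand by bilinearity, and reduce each term using the commutation identities collected in Lemmas~\ref{comm3}, \ref{comm4} and \ref{comm5}, together with the inductive formulas \eqref{Psi} and \eqref{Phi} for $\Psi_i$ and $\Phi_i$. Throughout, Lemmas~\ref{comm1} and~\ref{comm2} are invoked silently to transport factors past each other modulo powers of $q$. The bookkeeping decouples into a $\D,\X$-part, which must match the operator structure of the target $e_{\pm i,j}$, and a scalar $\tau_{\pm 1},\Lambda_{\pm i},\mu_{\pm i}$ part which must produce the correct prefactor; Lemma~\ref{comm3}(1) guarantees the two parts do not interfere.

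\textbf{Key steps.} I would treat the identities in the order \eqref{e-i,j}, \eqref{ei,j}, \eqref{e12}, \eqref{ej-1,j}, \eqref{ejj}, since the amount of bookkeeping grows in this sequence. For \eqref{e-i,j} and \eqref{ei,j} one writes $e_j=q^{-2}([\D_j,\Psi_j]_q\D_{1-j}-\X_{j-1_R}\D_j)$ and expands $[e_{\pm i,j-1},e_j]_q$. Cross terms involving $\D_{1-j}$ and $\X_{j-1_R}$ that do not contribute are killed by the $q$-commutations in Lemma~\ref{comm3}(2)--(3), while the surviving terms yield $\X_{i_R}\D_j$ (resp.\ $\X_{-i_L}\D_j$) plus a $[\D_j,\Psi_{i+1}]_q$-piece, matching Definition~\ref{reali} after applying \eqref{psd} and \eqref{dpsi}. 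Identity \eqref{e12} is essentially the case $j=2$ of \eqref{e-i,j} combined with the base replacement $e_1=e_{1,1}$; the only extra ingredient is \eqref{xxd1}--\eqref{xxd2}, which convert $[\X_{-1_R},\X_{1_R}\D_2]_q$ into $\X_{-2_R}$ up to an $\X_{-2_L}$ correction that is absorbed by the $\Phi_1$ tail of $e_{1,2}$.

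\textbf{The main obstacle.} Relation \eqref{ej-1,j} and, above all, \eqref{ejj} are the hard steps. For \eqref{ej-1,j} the difficulty is that $e_{j-2,j}$ carries an $\X_{-j_R}[\Phi_{j-2},\X_{-j+2_L}]_q$ tail which, after bracketing with $e_{j-1}$, must reshape into $\X_{-j_R}[\Phi_{j-1},\X_{-j+1_L}]_q$; this rests on the $\Phi$-commutation relations in Lemma~\ref{comm4}(4) together with the recursion \eqref{Phi}. For \eqref{ejj} one must recover the recursively defined $\X_{-j_R}$ itself out of the symmetric bracket $[e_{1,j},e_{-1,j}]$. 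My strategy is to split $\Psi_1=\X_{-1_L}\X_{1_R}+q\mu_{-1}^2\Psi_2$ via \eqref{Psi} and separate the bracket into a principal piece producing $\X_{-j_R}\D_j$ and a correction piece producing $q^{-2}[\D_j,\Psi_1]_q\D_j$. The principal piece is extracted from $[\X_{-1_L}\D_j,\X_{1_R}\D_j]$ using Lemma~\ref{comm3}(4) and the recursion for $\X_{-j_R}$, while the correction is assembled from the $\X_{-j_R}$-tails of $e_{\pm 1,j}$ using \eqref{xdp} and \eqref{ddp}. Tracking the prefactors $\tau_1\tau_{-1}^{-1}$, the signs $(-1)^{j+1}$, and the $[2]_q^{-1}$-normalization carefully is the most error-prone aspect; I expect the computation to reduce after cancellation to a single scalar identity in $\mu_{\pm j}$ that is verified by Lemma~\ref{q-num}.
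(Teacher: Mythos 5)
Your plan coincides with the paper's own proof: each of \eqref{e12}--\eqref{ejj} is verified by expanding the $q$-bracket of the operators of Definition~\ref{reali} into four cross-brackets and reducing them with Lemmas~\ref{comm3}--\ref{comm5}, the recursions \eqref{Psi} and \eqref{Phi}, and the identities \eqref{psd}, \eqref{ddp}, \eqref{xdp}, \eqref{xxd1}, \eqref{xxd2}, exactly as you propose. Two small misattributions in your sketch would get corrected in execution but do not affect the strategy: in \eqref{e12} the $\X_{-2_L}$-corrections cancel among the four cross-brackets rather than being absorbed by a $\Phi_1$ tail (indeed $[\Phi_1,\X_{-1_L}]_q=\D_1$, so $e_{1,2}$ carries no such tail), and in \eqref{ejj} the principal piece $\X_{-j_R}\D_j$ arises from $[\X_{-j_R}\D_1,\X_{1_R}\D_j]=q^2\X_{-j_R}\D_j$ while $[\X_{-1_L}\D_j,\X_{1_R}\D_j]=-\la\X_{-1_L}\X_{1_R}\D_j^2$ merely cancels against part of the $\Psi_1$-split, with no appeal to Lemma~\ref{q-num} needed at the end.
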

\begin{proof}(1) To prove $ e_{1,2}=[e_{1},e_2]_{q^{2}}$, we compute the following four brackets f\/irst. By \eqref{brac1}, \eqref{Psi}, \eqref{xxd1}, \eqref{xxd2} and Lemma \ref{comm3}, we get
\begin{gather*}
[\X_{-1_R}\D_1, \X_{1_R}\D_2]_{q^2} =\X_{-1_R}[\D_1, \X_{1_R}\D_2]_q+q[\X_{-1_R},\X_{1_R}\D_2]_q\D_1\\
\hphantom{[\X_{-1_R}\D_1, \X_{1_R}\D_2]_{q^2}}{}
=\X_{-1_R}[\D_1, \X_{1_R}]_{q^2}\D_2+q\big(q^2\X_{-2_R}-q^4\Lambda_{-1}^2\mu_1^2\X_{-2_L}\big)\D_1\\
\hphantom{[\X_{-1_R}\D_1, \X_{1_R}\D_2]_{q^2}}{}
={q^2}\X_{-1_R}\D_2+q^3\X_{-2_R}\D_1-q^5\mu_{-1}^2\mu_1^2\X_{-2_L}\D_1\\
\hphantom{[\X_{-1_R}\D_1, \X_{1_R}\D_2]_{q^2}}{}
={q^3}\big(\mu_1^2\X_{-1_L} +\la \mu_{-1}^2\Psi_{2}\D_1\big)\D_2+q^3\X_{-2_R}\D_1-q^5\mu_{-1}^2\mu_1^2\X_{-2_L}\D_1\\
\hphantom{[\X_{-1_R}\D_1, \X_{1_R}\D_2]_{q^2}}{}
={q^3}\mu_1^2\X_{-1_L}\D_2 +{q^2}\la \mu_{-1}^2(\X_{-2_L}\X_{2_R}+q\mu_{-2}^2\Psi_{3})\D_2\D_1\\
\hphantom{[\X_{-1_R}\D_1, \X_{1_R}\D_2]_{q^2}=}{}
+q^3\X_{-2_R}\D_1-q^5\mu_{-1}^2\mu_1^2\X_{-2_L}\D_1\\
\hphantom{[\X_{-1_R}\D_1, \X_{1_R}\D_2]_{q^2}}{}
={q^3}\mu_1^2\X_{-1_L}\D_2 +{q^3} \mu_{-1}^2\big(\mu_2^2-1\big)\X_{-2_L}\D_1 +q^3\la\mu_{-1}^2\mu_{-2}^2\Psi_{3}\D_2\D_1\\
\hphantom{[\X_{-1_R}\D_1, \X_{1_R}\D_2]_{q^2}=}{}+q^3\X_{-2_R}\D_1-q^5\mu_{-1}^2\mu_1^2\X_{-2_L}\D_1\tag{\ref{e12}$'$a} \\
\hphantom{[\X_{-1_R}\D_1, \X_{1_R}\D_2]_{q^2}}{}
={q^3}\mu_1^2\X_{-1_L}\D_2 -{q^3} \mu_{-1}^2\X_{-2_L}\D_1-q^5\mu_{-1}^2\mu_1^2\X_{-2_L}\D_1+q^2[2]_q\X_{-2_R}\D_1,\\
 \big[q^{-1}\X_{-1_L}\D_1, \X_{1_R}\D_2\big]_{q^2}=q^{-1}\X_{-1_L}[\D_1, \X_{1_R}]_{q^4}\D_2 =\la^{-1}\X_{-1_L}\big(\big(q^2\mu_1^2-1\big)-q^4\big(\mu_1^2-1\big)\big)\D_2\\
\hphantom{\big[q^{-1}\X_{-1_L}\D_1, \X_{1_R}\D_2\big]_{q^2}}{}
={q}\X_{-1_L}\big(q^2+1-q^2\mu_1^2\big)\D_2 ={q}\big(q[2]_q-q^2\mu_1^2\big)\X_{-1_L}\D_2\\
\hphantom{\big[q^{-1}\X_{-1_L}\D_1, \X_{1_R}\D_2\big]_{q^2}}{}
={q^2}[2]_q\X_{-1_L}\D_2-q^3\mu_1^2\X_{-1_L}\D_2,\tag{\ref{e12}$'$b}\\
 [\X_{-1_R}\D_1,-q \X_{-2_L}\D_{-1}]_{q^2}=-q\X_{-1_R}[\D_1, \X_{-2_L}\D_{-1}]_{q}-q^2[\X_{-1_R}, \X_{-2_L}\D_{-1}]_{q}\D_1\\
\hphantom{[\X_{-1_R}\D_1,-q \X_{-2_L}\D_{-1}]_{q^2}}{}
=-q\X_{-1_R} \X_{-2_L}[\D_1,\D_{-1}]_{q}+q^5\mu_{-1}^2\mu_1^2 \X_{-2_L}\D_1 \\
\hphantom{[\X_{-1_R}\D_1,-q \X_{-2_L}\D_{-1}]_{q^2}}{}=q^5\mu_{-1}^2\mu_1^2 \X_{-2_L}\D_1,\tag{\ref{e12}$'$c}\\
 \big[q^{-1}\X_{-1_L}\D_1, -q\X_{-2_L}\D_{-1}\big]_{q^2}=-q[\X_{-1_L}, \X_{-2_L}\D_{-1}]_{q}\D_1=-q^2\X_{-2_L}[\X_{-1_L}, \D_{-1}]\D_1\\
 \hphantom{\big[q^{-1}\X_{-1_L}\D_1, -q\X_{-2_L}\D_{-1}\big]_{q^2}}{}
=q^3\X_{-2_L}\mu_{-1}^2\D_1=q^3\mu_{-1}^2\X_{-2_L}\D_1.\tag{\ref{e12}$'$d}
\end{gather*}
It is easy to see that
\begin{gather*}
[e_{1},e_{2}]_{q^2}=[e_{1,1},e_{-1,2}]_{q^2}=-q^{-2}[2]_q^{-1}\tau_1\tau_{-1}^{-1}[\X_{-1_R}\D_1\!+q^{-1}\X_{-1_L}\D_1, \X_{1_R}\D_2\!-q\X_{-2_L}\D_{-1}]_{q^2}.
\end{gather*}
So from (\ref{e12}$'$a)--(\ref{e12}$'$d), we obtain
\begin{gather*}
[e_{1},e_{2}]_{q^2} =-\tau_1\tau_{-1}^{-1}(\X_{-1_L}\D_2+\X_{-2_R}\D_1) =e_{1,2}.
\end{gather*}

(2) To prove $e_{-i,j}=[e_{-i,j-1},e_{j}]_q$ for $3 \leq i+2 \leq j\leq n$, we compute the following four brackets f\/irst. By~\eqref{brac3}, \eqref{psd} and Lemma~\ref{comm3}, for $3 \leq i+2 \leq j\leq n$, we get
\begin{gather*}
[\X_{i_R}\D_{j-1},\X_{j-1_R}\D_j]_q=\X_{i_R}[\D_{j-1},\X_{j-1_R}\D_j]_q\\
\qquad{} = \X_{i_R}[\D_{j-1},\X_{j-1_R}]_{q^2}\D_j=q^2\X_{i_R}\D_j,\tag{\ref{e-i,j}$'$a}\\
[[\D_{j-1}, \Psi_{i+1}]_q\D_{-i},\X_{j-1_R}\D_j]_q =[[\D_{j-1},\Psi_{i+1}]_q,\X_{j-1_R}\D_j]_q\D_{-i}\\
\qquad{}=[\D_{j-1},[\Psi_{i+1},\X_{j-1_R}\D_j]]_{q^2}\D_{-i}+[[\D_{j-1},\X_{j-1_R}\D_j]_q,\Psi_{i+1}]_q\D_{-i}\\
\qquad{}=[\D_{j-1},[\Psi_{i+1},\X_{j-1_R}\D_j]]_{q^2}\D_{-i}+[[\D_{j-1},\X_{j-1_R}]_{q^2}\D_j,\Psi_{i+1}]_q\D_{-i}\\
\qquad{}=[\D_{j-1},[\Psi_{i+1},\X_{j-1_R}\D_j]]_{q^2}\D_{-i}+{q^2}[\D_j,\Psi_{i+1}]_q\D_{-i}, \tag{\ref{e-i,j}$'$b}\\
[\X_{i_R}\D_{j-1},q\X_{-j_L}\D_{1-j}]_q =q[\X_{i_R}\D_{j-1},\X_{-j_L}]\D_{1-j}=0, \tag{\ref{e-i,j}$'$c}\\
[[\D_{j-1}, \Psi_{i+1}]_q\D_{-i},q\X_{-j_L}\D_{1-j}]_q =[[\D_{j-1}, \Psi_{i+1}]_q,q\X_{-j_L}\D_{1-j}]_q\D_{-i}\\
\qquad{}=q[\D_{j-1},[\Psi_{i+1},\X_{-j_L}\D_{1-j}]]_{q^2}\D_{-i}+q[[\D_{j-1},\X_{-j_L}\D_{1-j}]_q, \Psi_{i+1}]_q\D_{-i}\\
\qquad{}=[\D_{j-1},[\Psi_{i+1},\X_{j-1_R}\D_j]]_{q^2}\D_{-i}+q[[\D_{j-1},\X_{-j_L}]\D_{1-j}, \Psi_{i+1}]_q\D_{-i}\\
\qquad{}=[\D_{j-1},[\Psi_{i+1},\X_{j-1_R}\D_j]]_{q^2}\D_{-i}.\tag{\ref{e-i,j}$'$d}
\end{gather*}
From (\ref{e-i,j}$'$a)--(\ref{e-i,j}$'$d), it is easy to see that
\begin{gather*}
[e_{-i,j-1},e_{j}]_q=[e_{-i,j-1},e_{1-j,j}]_q\\
\hphantom{[e_{-i,j-1},e_{j}]_q}{} =(-1)^{i+j}q^{-4}[\X_{i_R}\D_{j-1}-[\D_{j-1}, \Psi_{i+1}]_q\D_{-i},\X_{j-1_R}\D_j-q\X_{-j_L}\D_{1-j}]_q\\
\hphantom{[e_{-i,j-1},e_{j}]_q}{}=(-1)^{i+j}q^{-2}(\X_{i_R}\D_j-[\D_j,\Psi_{i+1}]_q\D_{-i}) =e_{-i,j}.
\end{gather*}

(3) To prove $e_{i,j}=[e_{i,j-1},e_{j}]_q$ for $3 \leq i+2 \leq j\leq n$, we compute the following four brackets f\/irst. By \eqref{comm2.2}, \eqref{xxd1}, \eqref{xxd2}, Lemmas \ref{comm3} and \ref{comm4}, for $3 \leq i+2 \leq j\leq n$, we get
\begin{gather*}
[\X_{-i_L}\D_{j-1},\X_{j-1_R}\D_{j}]_q =\X_{-i_L}[\D_{j-1},\X_{j-1_R}]_{q^2}\D_{j} ={q^2}\X_{-i_L}\D_{j},\tag{\ref{ei,j}$'$a}\\
\big[q^{i-1}\X_{1-j_R}[\Phi_i,\X_{-i_L}]_q,\X_{j-1_R}\D_{j}\big]_q =q^{i-1}[\X_{1-j_R},\X_{j-1_R}\D_{j}]_q[\Phi_i,\X_{-i_L}]_q \\
\qquad{} =q^{i-1}(q^2\X_{-j_R}-q^{j+2}\Lambda_{1-j}^2\mu_{j-1}^2\X_{-j_L})[\Phi_i,\X_{-i_L}]_q \\
\qquad{} =q^{i+1}\X_{-j_R}[\Phi_i,\X_{-i_L}]_q -q^{i+j+1}\Lambda_{1-j}^2\mu_{j-1}^2\X_{-j_L}[\Phi_i,\X_{-i_L}]_q, \tag{\ref{ei,j}$'$b}\\
[\X_{-i_L}\D_{j-1},-q\X_{-j_L}\D_{1-j}]_q =-q\X_{-i_L}\X_{-j_L}[\D_{j-1},\D_{1-j}]_q=0, \tag{\ref{ei,j}$'$c}\\
\big[q^{i-1}\X_{1-j_R}[\Phi_i,\X_{-i_L}]_q,-q\X_{-j_L}\D_{1-j}\big]_q =-q^i[\X_{1-j_R},\X_{-j_L}\D_{1-j}]_q[\Phi_i,\X_{-i_L}]_q \\
\qquad{} =q^{i+j+1}\Lambda_{1-j}^2\mu_{j-1}^2\X_{-j_L}[\Phi_i,\X_{-i_L}]_q.\tag{\ref{ei,j}$'$d}
\end{gather*}
From (\ref{ei,j}$'$a)--(\ref{ei,j}$'$d), it is easy to see that
\begin{gather*}
 [e_{i,j-1},e_j]_q=(-1)^{j+1}q^{-2}\tau_1\tau_{-1}^{-1}[\X_{-i_L}\D_{j-1}\!
+q^{i-1}\X_{1-j_R}[\Phi_i,\X_{-i_L}]_q,\X_{j-1_R}\D_{j}\!-q\X_{-j_L}\D_{1-j}]_q\\
\hphantom{[e_{i,j-1},e_j]_q}{}
=(-1)^{j+1}\tau_1\tau_{-1}^{-1}\big(\X_{-i_L}\D_{j}+q^{i-1}\X_{-j_R}[\Phi_i,\X_{-i_L}]_q\big)=e_{i,j}.
\end{gather*}

(4) To prove $e_{j-1,j}=[e_{j-1},e_{j-2,j}]_q$ for $3\leq j\leq n$, we compute the following four brackets f\/irst. By \eqref{comm2.2} and Lemmas \ref{comm3}--\ref{comm5}, for $3\leq j\leq n$, we get
\begin{gather*}
[\X_{j-2_R}\D_{j-1},\X_{2-j_L}\D_j]_q =q^{-1}[\X_{j-2_R},\X_{2-j_L}\D_j]_{q^2}\D_{j-1}\\
\qquad{} =q^{-1}[\X_{j-2_R},\X_{2-j_L}]_{q}\D_j\D_{j-1} =0,\tag{\ref{ej-1,j}$'$a}\\
q^{j-3}[\X_{j-2_R}\D_{j-1},\X_{-j_R}[\Phi_{j-2},\X_{2-j_L}]_q]_q =q^{j-2}\X_{-j_R}[\X_{j-2_R},[\Phi_{j-2},\X_{2-j_L}]_q]\D_{j-1}\\
\qquad{} =-q^{j-3}\X_{-j_R}[[\Phi_{j-2},\X_{j-2_R}]_q,\X_{2-j_L}]_{q^2}\D_{j-1}
=-q^{j-1}\X_{-j_R}\big[\Lambda_{j-2}^2\D_{2-j},\X_{2-j_L}\big]_{q^2}\D_{j-1}\\
\qquad{}
=-q^{j-1}\Lambda_{j-2}^2\X_{-j_R}[\D_{2-j},\X_{2-j_L}]_{q^2}\D_{j-1} =-q^{j}\Lambda_{j-2}^2\X_{-j_R}\D_{j-1},\tag{\ref{ej-1,j}$'$b}\\
-q[\X_{1-j_L}\D_{2-j},\X_{2-j_L}\D_j]_q =-q[\X_{1-j_L}\D_{2-j},\X_{2-j_L}]_q\D_j\\
\qquad{} =-q\X_{1-j_L}[\D_{2-j},\X_{2-j_L}]_{q^2}\D_j =-q^2\X_{1-j_L}\D_j,\tag{\ref{ej-1,j}$'$c}\\
-q^{j-2}\big[\X_{1-j_L}\D_{2-j},\X_{-j_R}[\Phi_{j-2},\X_{2-j_L}]_q\big]_q =-q^{j-2}\X_{-j_R}\big[\X_{1-j_L}\D_{2-j},[\Phi_{j-2},\X_{2-j_L}]_q\big]_q\\
\qquad{} =-q^{j-2}\X_{-j_R}\X_{1-j_L}\big[\D_{2-j},[\Phi_{j-2},\X_{2-j_L}]_q\big]_q \\
\qquad{} =q^{j-1}\X_{-j_R}\X_{1-j_L}\big[\Phi_{j-2},[\X_{2-j_L},\D_{2-j}]_{q^{-2}}\big]_{q^2}
=q^{j-1}\X_{-j_R}\X_{1-j_L}\big[\Phi_{j-2},-q^{-1}\big]_{q^2}\\
\qquad{} =q^{j-1}\la\X_{-j_R}\X_{1-j_L}\Phi_{j-2}. \tag{\ref{ej-1,j}$'$d}
\end{gather*}
From (\ref{ej-1,j}$'$a)--(\ref{ej-1,j}$'$d), it is easy to see that
\begin{gather*}
 [e_{j-1},e_{j-2,j}]_q\\
\qquad{} =(-1)^{j}q^{-2} \tau_1\tau_{-1}^{-1} \big[\X_{j-2_R}\D_{j-1}-q\X_{1-j_L}\D_{2-j},\X_{2-j_L}\D_j
+q^{j-3}\X_{-j_R}[\Phi_{j-2},\X_{2-j_L}]_q\big]_q\\
\qquad{} =(-1)^{j+1}\tau_1\tau_{-1}^{-1}\big(\X_{1-j_L}\D_{j} +q^{j-2}\X_{-j_R}\big(\Lambda_{j-2}^2\D_{j-1}-\la q^{-1}\X_{1-j_L}\Phi_{j-2}\big)\big)\\
 \qquad{} =(-1)^{j+1}\tau_1\tau_{-1}^{-1}\big(\X_{1-j_L}\D_{j} +q^{j-2}\X_{-j_R}[\Phi_{j-1},\X_{1-j_L}]_q\big) =e_{j-1,j}.
\end{gather*}

(5) To prove $e_{j,j}=[2]_q^{-1}[e_{1,j},e_{-1,j}]$ for $2\leq j\leq n$, we compute the following four brackets f\/irst. By Lemma \ref{comm3}, \eqref{Psi}, \eqref{ddp} and \eqref{xdp}, for $2 \leq j\leq n$, we get
\begin{gather*}
 [\X_{-1_L}\D_j,\X_{1_R}\D_j]=q^{-1}[\X_{-1_L},\X_{1_R}]_q\D_j^2 =-\la\X_{-1_L}\X_{1_R}\D_j^2, \tag{\ref{ejj}$'$a}\\
-[\X_{-1_L}\D_j,[\D_j,\Psi_{2}]_q\D_{-1}]=-q^{-1}[\X_{-1_L},[\D_j,\Psi_{2}]_q\D_{-1}]_q\D_j =-[\D_j,\Psi_{2}]_q[\X_{-1_L},\D_{-1}]\D_j \\
 \qquad{} =q\mu_{-1}^2[\D_j,\Psi_{2}]_q\D_j =q\mu_{-1}^2[\D_j,q^{-1}\mu_{-1}^{-2}(\Psi_1-\X_{-1_L}\X_{1_R})]_q\D_j\\
\qquad{} =[\D_j,\Psi_1]_q\D_j+\la\X_{-1_L}\X_{1_R}\D_j^2,\tag{\ref{ejj}$'$b}\\
[\X_{-j_R}\D_1,\X_{1_R}\D_j]=\X_{-j_R}[\D_1,\X_{1_R}\D_j]_q =\X_{-j_R}[\D_1,\X_{1_R}]_{q^2}\D_j={q^2}\X_{-j_R}\D_j, \tag{\ref{ejj}$'$c}\\
-[\X_{-j_R}\D_1,[\D_j,\Psi_{2}]_q\D_{-1}]= -q[\X_{-j_R},[\D_j,\Psi_{2}]_q\D_{-1}]_{q^{-1}}\D_1\\
\qquad{} =-q[\X_{-j_R},[\D_j,\Psi_{2}]_q]\D_{-1}\D_1=0. \tag{\ref{ejj}$'$d}
 \end{gather*}
It is easy to see that
\begin{gather*}
[e_{1,j},e_{-1,j}]=(-1)^{i+j}q^{-2}\big[\tau_1\tau_{-1}^{-1} (\X_{-1_L}\D_j +\X_{-j_R}\D_1),\X_{1_R}\D_j-[\D_j,\Psi_{2}]_q\D_{-1}\big]\\
\hphantom{[e_{1,j},e_{-1,j}]}{}
=(-1)^{i+j}q^{-2}\tau_1\tau_{-1}^{-1} \big[\X_{-1_L}\D_j +\X_{-j_R}\D_1, \X_{1_R}\D_j-[\D_j,\Psi_{2}]_q\D_{-1}\big].
\end{gather*}
So from (\ref{ejj}$'$a)--(\ref{ejj}$'$d), we get
\begin{gather*}[e_{1,j},e_{-1,j}]=(-1)^{i+j}\tau_1\tau_{-1}^{-1}\big(\X_{-j_R}\D_j+q^{-2}[\D_j,\Psi_1]_q\D_j\big)=[2]_qe_{j,j}.\end{gather*}
We complete the proof.
\end{proof}

Hence, we can obtain the operators $e_{\pm i,j}$ from $e_i$ by the same inductive formulas that we used to get~$E_{\pm i,j}$ from~$E_i$. In other words, all the positive root vectors $E_{\pm i,j}$ of $U_q({\mathfrak{sp}}_{2n})$ can be realized by the operators~$e_{\pm i,j}$ in the subalgebra~$U_q^{2n}$ of $\operatorname{Dif\/f}(\ek)$.

\subsection*{Acknowledgements}

The authors would appreciate the referees for their useful comments and good suggestions for improving the paper. The f\/irst author is supported by the NSFC (Grants No.~11101258 and No.~11371238). The second author is supported by the NSFC (Grant No.~11771142).

\pdfbookmark[1]{References}{ref}
\LastPageEnding

\end{document}